\pgfplotsset{width=9cm,compat=1.5}
\pgfplotsset{select coords between index/.style 2 args={
    x filter/.code={
        \ifnum\coordindex<#1\fi
        \ifnum\coordindex>#2\fi
    }
}}
\pgfplotsset{
	axis line style={gray}
}
\pgfplotsset{compat=1.10}
\pgfplotsset{every axis/.append style={
		tick label style={font=\normalsize} }}
\definecolor{grey1}{rgb}{0.65,0.65,0.65}
\newtheorem{theorem}{Theorem}[section]
\newtheorem{lemma}[theorem]{Lemma}
\theoremstyle{definition}
\newtheorem{definition}[theorem]{Definition}
\newtheorem{assumption}[theorem]{Assumption}
\newtheorem{remark}[theorem]{Remark}
\newtheorem*{example*}{Example}
\newcommand{\bu}{\mathbf{u}}
\newcommand{\bU}{\mathbf{U}}
\newcommand{\bv}{\mathbf{v}}
\newcommand{\cL}{\mathcal{L}}
\let\old@rule\@rule
\def\@rule[#1]#2#3{\textcolor{Blue}{\old@rule[#1]{#2}{#3}}}
\newcommand*\bigcdot{\mathpalette\bigcdot@{.5}}
\newcommand*\bigcdot@[2]{\mathbin{\vcenter{\hbox{\scalebox{#2}{$\m@th#1\bullet$}}}}}
\DeclareMathOperator*{\argmin}{argmin}
\begin{document}
	\title{Open-loop potential difference games with inequality constraints\protect \footnote{This work is supported by Science and Engineering Research Board (SERB), Government of India, Grant no. SERB--EMR/2017/001267.}}
	\author[1]{Aathira Prasad}
	\author[1]{Puduru Viswanadha Reddy}
	\affil[1]{Department of Electrical Engineering, Indian Institute of Technology - Madras, India}
	\affil[ ]{\small \texttt{ee18s033@smail.iitm.ac.in}, \texttt{vishwa@ee.iitm.ac.in}}
	\maketitle

	\begin{abstract}
		Static potential games are non-cooperative games which admit a fictitious function, also referred to as a potential function, such that the minimizers of this function constitute a subset (or a refinement) of the Nash equilibrium strategies of the associated non-cooperative game. In this paper we study a class $N$-player non-zero sum difference games with inequality constraints which admit a potential game structure. In particular, we provide conditions for the existence of an optimal control problem (with inequality constraints) such that the solution of this problem yields an open-loop Nash equilibrium strategy of the corresponding dynamic non-cooperative game (with inequality constraints). Further, we provide a way to construct potential functions associated with this optimal control problem. We specialize our general results to a linear-quadratic setting and provide a linear complementarity problem based approach for computing the refinements of the open-loop Nash equilibria obtained in \cite{Reddy:15}. We illustrate our results with an example inspired by energy storage incentives in a smart grid.
\end{abstract}
\vskip1ex 
		\noindent 
		\textbf{Keywords:} 
		Potential games; dynamic games with inequality constraints; open-loop Nash equilibrium;
		linear complementarity problem.

	\section{Introduction}
    Multi-agent control systems and related distributed architectures are becoming increasingly popular  with emerging applications such as smart grids, Internet of Things (IoT) systems, intelligent traffic networks and cyber-security. These systems are characterized by the presence of interdependent multiple decision making entities, which are large-scale, distributed, networked and heterogeneous in nature. Game theory has emerged as a powerful tool for analyzing multi-agent systems; see \cite{Manshaei:13}, \cite{Saad:12} and \cite{Zhu:15} for applications of game theory in the areas mentioned above. In particular, dynamic game theory  provides a mathematical framework for modeling multi-agent interactions which evolve over time, and has been successfully used in analyzing a variety of decision problems arising in engineering, economics and management science; see \cite{Basar:99}, \cite{Dockner:00}, \cite{Basar:18}. A significant share of dynamic game models are formulated in an unconstrained setting, that is, where the state and control variables are unconstrained; except for the state equation which captures the evolution of the interaction environment. Constraints appear naturally in real world applications in the form of production-capacity, environmental, market and budget constraints. For example, electric vehicle charging station with limited energy resources impose joint constraints on players. Further, decision problems related to network congestion, which are used in modeling traffic systems, inherently involve capacity constraints. Recently, \cite{Reddy:15} and \cite{Reddy:16} study a class of  non-zero sum difference games with inequality constraints and provide conditions for the existence of Nash equilibria  with open-loop and feedback information structures. 

    The novelty of this paper lies in studying a class of dynamic games with inequality constraints
    which admit a potential game structure. Static potential games were first introduced in \cite{rosenthal:73} and further studied in \cite{monderer:96}. Loosely speaking, a potential game is a non-cooperative game that satisfies the property that a Nash equilibrium of the game is obtained when players jointly optimize a fictitious function, also referred to as a potential function. In other words, a Nash equilibrium of the game is obtained by solving an optimization problem as opposed to a fixed point problem.  An important property associated with  a potential game  is that the strategy profile that provides the optimum for the potential function is a pure strategy Nash equilibrium, and as a result, the existence of pure strategy Nash equilibrium is guaranteed in a potential game. It is well known (\cite{Quint:97}) that a non-cooperative game can admit more than one Nash equilibrium. The multiplicity of equilibria naturally poses a selection problem, and to address this, certain refinements of Nash equilibira have been proposed; see \cite{Myerson:97}. These refinements provide a way of selecting a subset of equilibria based on  additional properties that a Nash equilibrium is required to satisfy. In a potential game, the Nash equilibrium is refined with the property that players are jointly optimizing the potential function in a cooperative manner, even though they are acting strategically optimizing their individual
    objective functions. Due to this property, potential games find applications in the study of network
    congestion games \cite[chapter 18]{Nisan:07}, decentralized learning algorithms (\cite{Marden:09}) and 
    in utility function design (\cite{Marden:18}) for multi-agent systems. Further, in \cite{slade:94}
    this property of potential games has been explored in the context of oligopolistic markets.
    In summary, the static potential games have been studied extensively in literature. 
    
    The objective of this paper is to extend the notion of a potential game in a dynamic setting, and in particular, for dynamic games where inequality constraints appear jointly in control and state variables. Besides providing the conditions on the existence of this class of games, another important objective of our paper is towards computing the refinements of the open-loop Nash equlibria.  We consider a class of finite horizon $N$-player non-zero sum nonlinear difference games with constraint structure similar to \cite{Reddy:15} and \cite{Reddy:16}.     Our contributions are summarized as follows.
   \begin{enumerate}
   	\item  We define the notion of open-loop potential difference game and associate an inequality constrained optimal control problem with the dynamic non-cooperative game with inequality constraints. Further, in Lemma \ref{lem:Difference Condition} and Theorem \ref{thm:verfication} we provide conditions under which the non-cooperative game admits a potential game structure.
   	\item When the potential functions associated with the optimal control problem are not specified, we provide, in Theorem \ref{thm:construction}, a method for constructing the potential functions from the objective functions of the players using the theory of conservative vector fields.   	
   	\item We specialize the obtained results to a linear quadratic setting and characterize in Theorem \ref{th:LQ Restrictions} a class of linear quadratic potential difference games with inequality constraints. Further, in Theorem \ref{thm:LCPOLNE} we provide a linear complementarity problem based method for computing a refinement of the open-loop Nash equilibria.   	
   \end{enumerate}
    
	The paper is organized as follows. In section  \ref{sec:Dynamic Game Model}, we introduce the dynamic game model with the separable structure of the objective function and inequality constraints jointly in state and decision variables. In section \ref{sec: OLDPG}, we define the open-loop dynamic potential games by introducing an optimal control problem associated with the dynamic potential game. Further, we provide the structure of the dynamic game for the existence of  potential functions. We demonstrate the equivalence of the solution of the optimal control problem  as an open-loop Nash equilibrium of the dynamic game. When the potential functions are specified before hand, we also illustrate a procedure for the  construction of potential functions. In section \ref{sec:LQDG}, we specialize these results to the linear quadratic setting. In section \ref{sec:Examples} we illustrate our results with an application motivated by energy storage incentives in a smart grid. Finally, in section \ref{sec:conclusions}  we provide conclusions and future work. 
	
\subsection{Literature review}
The literature on dynamic potential games is limited compared to their static counter part. In a static  {non-cooperative game} the objective functions of the players are required to satisfy certain conditions to admit a potential function. Quite naturally, an extension of these conditions to a dynamic setting imposes structural constraints on the instantaneous and terminal objective functions. In \cite{slade:94}, the author studies oligopolistic markets, and provides conditions under which a single optimization problem provides a Nash equilibrium, both in the static and dynamic settings. The objective function of this optimization problem in this work is referred to as a fictitious function. In particular, in the dynamic case, existence of these functions under the open-loop and feedback information structures has been explored. In \cite{dragone:15}, the authors consider a non-cooperative differential game with open-loop information structure, and  provide conditions for the existence of Hamiltonian potential functions. Potential differential games were studied in \cite{fonseca:18} in an open-loop setting. The authors primarily focused on separable structure of the payoff function from which the potential function could be derived. The unconstrained discrete time game is considered in \cite{gonzalez:14} in a stochastic setting. In \cite{Gonzalez:16}, the authors provide a survey on static and dynamic potential games. All these works study dynamic potential games in discrete and continuous time settings without additional constraints on the state and control variables. In \cite{zazo:16}, the authors consider a discrete time infinite horizon non-cooperative difference games with inequality constraints and provide conditions for the existence of potential functions. They also mention a way for constructing potential functions using the theory of conservative vector fields; an approach followed in static games \cite{monderer:96}. Our work is closer to \cite{zazo:16} in spirit, but differs considerably in the model and approach. The constraint structure in our paper is inspired by our previous works \cite{Reddy:15} and \cite{Reddy:16}. In particular,  we assume that the players have two types of control variables, namely, 1) variables that (directly) affect the dynamics but do not enter the constraints, and 2) variables that enter the constraints jointly with the state variables but do not appear in the dynamics. Further, we assume that the objective functions are separable in the two types of control variables, that is, that there are no cross-terms between them. As was shown in \cite{Reddy:15} and \cite{Reddy:16}, we also demonstrate in this paper that these assumptions are crucial in obtaining a semi-analytical characterizations for Nash equilibria, and in providing a linear complementarity problem based method for computing them. 
More importantly, the linear quadratic dynamic potential games analyzed in this paper provides refinements of the open-loop Nash equilibria obtained in \cite{Reddy:15}, and a procedure for computing them.
	 
 \subsection{Notation} We shall use the following notation. The $n$-dimensional Euclidean space is denoted by $\mathbb R^n$, $n\geq 1$. $A^\prime$ denotes the transpose of a matrix or a vector $A$. $A_1\oplus A_2\oplus \cdots \oplus A_n$ represents the block diagonal matrix obtained by taking the matrices $A_1, A_2,\cdots,A_n$ as diagonal elements in this sequence. The matrix with all entries as zeros is denoted by $\mathbf{0}$,  {all entries as one is denoted by $\mathbf{1}$}, and the identity matrix is represented by $\mathbf{I}$. We call two vectors $x,y\in \mathbb R^n$ complementary if $x\geq 0$, $y\geq0$ and $x^\prime y=0$, and $0\leq x \perp y \geq 0$ denotes this condition. Let $A$ be a $n\times n$ matrix and $a$ be a $n\times 1$ vector. Let $n$ be partitioned as $n=n_1+n_2+\cdots+n_K$. We represent $[A]_{ij}$ as the $n_i\times n_j$ sub-matrix associated with indices $n_i$ (row) and $n_j$ (column), and $[a]_i$ as the $n_i\times 1$ sub-vector associated with indices $n_i$.  $[A]_{i\bullet}$ and $[A]_{\bullet i }$ represent the $i^{th}$ row block matrix of dimensions $n_i \times n $ and $i^{th}$ column matrix  of dimension $n \times n_i$ of the   matrix $A$ respectively. The notation $\mathbf{e}_i$ represents a column vector with $i^{th}$ element as 1 and the rest of the elements as zero.  For differentiation with respect to a vector or matrix variable we follow the convention from \cite{lutkepohl:96}.

	\section{Dynamic game model} \label{sec:Dynamic Game Model}
	In this section, we introduce a class of finite horizon discrete time non-zero sum games with inequality constraints. Let $\mathcal N=\{1,2,\cdots,N\}$ be the set of players and $\mathcal K=\{0,1,2,\cdots,K\}$ be the set of time periods. At each time instant, player $i \in \mathcal N$ chooses the following two types of actions (control variables); variables that enter the dynamics of the system, but not the constraints, denoted by $u^i_k\in U^i_k\subset \mathbb R^{m_i}$; and variables that do not affect the dynamics, but do constrain the decision making process, denoted by $v_k^i\in V_k^i \subset \mathbb R^{s_i}$. Here, $U_k^i$ and $V_k^i$ are the sets of admissible values for the two types of variables. We denote the vector of actions of all players at time period  $k$ by $\mathbf u_k:=\begin{bmatrix} {u^1_k}^\prime&
	{u^2_k}^\prime&\cdots&{u^N_k}^\prime
	\end{bmatrix} ^\prime$ and $\mathbf{v}_k:=\begin{bmatrix} {v^1_k}^\prime&
	{v^2_k}^\prime&\cdots& {v^N_k}^\prime
	\end{bmatrix}^\prime$. 
	The state $ x_k  \in {X}_k \subset \mathbb R^n$ evolves according to the following discrete time dynamics
	\begin{align}
	 {x}_{k+1}=f_k( x_k ,\mathbf{u}_k),~k\in \mathcal K\backslash \{K\},~x_0 \text{  given}.
	\label{eq:statedynamics}
	\end{align}
	Here, $X_k$ denotes the set of admissible state vectors at time period $k$. 	We consider the following joint inequality constraints (also called coupled constraints) associated with players strategies
	\begin{align}
	h_k( x_k ,\mathbf{v}_k)\geq 0,~\mathbf{v}_k\geq 0,~  k\in \mathcal K.
	\label{eq:constraints}
	\end{align}
	Notice, the decision variables $\{u^i_k,~k\in \mathcal K \backslash \{K\},~i\in \mathcal N\}$ do not enter the constraints directly but affect them indirectly through the state variables. 
	Let the strategies of player $i$, that is, the plan of actions for the entire planning period, be denoted by $\tilde{u}^i:=\{u^i_k,~k\in \mathcal K\backslash \{K\}\}$ and 
	$\tilde{v}^i:=\{v^i_k,~k\in \mathcal K \}$. The set of players excluding player $i$ is denoted by $-i:=\mathcal N\backslash \{i\}$. The joint strategy profiles of the players be denoted by $\tilde{\mathbf{u}}:=(\tilde{u}^i,\tilde{u}^{-i})$  and $\tilde{\mathbf{v}}:=(\tilde{v}^i,\tilde{v}^{-i})$, and the corresponding strategy sets are denoted by $\mathcal U$ and $\mathcal V$ respectively. 	Each player $i\in \mathcal  N$ uses her strategies $\tilde{u}^i$ and $\tilde{v}^i$ to minimize the following   objective function given by
	\begin{align}
	 J^{i}(x_0,(\tilde{u}^i,\tilde{u}^{-i}),(\tilde{v}^i,\tilde{v}^{-i}))& =
	g^i_K( x_k ,\mathbf{v}_K) + \sum_{k=0}^{K-1} g^i_k( x_k ,\mathbf{u}_k,\mathbf{v}_k).
	\label{eq:objectives}
	\end{align}
	We have the following assumptions related to the dynamic game \eqref{eq:statedynamics}-\eqref{eq:objectives}.
	\begin{assumption}
		\begin{enumerate}
			\item The admissible action sets $\{U_k^i,~k\in \mathcal K\backslash \{K\},~i\in \mathcal N\}$, are such that the sets of state vectors $\{X_k,~k\in \mathcal K\}$ are convex, and feasible action sets $\{V^i_k( x_k ,v_k^{-i})=\{v_k^i\in \mathbb R^{s_i}~|~ h^i( x_k ,\mathbf{v}_k)\geq 0,~ \mathbf{v}_k\geq 0\},~\forall x_k\in X_k\}$ are non-empty, convex and bounded for all   $k\in \mathcal K$, $i\in \mathcal N$.	A strategy pair $(\tilde{\bu},\tilde{\bv})$ associated with these actions sets is an admissible strategy pair.

			\item The matrices $\left\{\frac{\partial h_k}{\partial v_k^i},~k\in \mathcal K,~i\in \mathcal  N\right \}$ have full rank, so as to satisfy constraint qualification conditions.
			\item The instantaneous cost functions in \eqref{eq:objectives} admit a separable structure, that is,  they do not contain cross terms involving the strategies $\mathbf{\tilde{u}}$  and $\mathbf{\tilde{v}}$ and are represented by			
			 $g_k^i( x_k ,\mathbf{u}_k,\mathbf{v}_k)=
			gu_k^i( x_k ,\mathbf{u}_k)+gv_k^i( x_k ,\mathbf{v}_k)$.
			\item The partial derivatives of the functions $f_k$, $h_k$, and the players' cost functions $\{g_k^i,~k\in \mathcal K,~i\in \mathcal N\}$  exist in their arguments, and are twice continuously differentiable in their arguments. 
		\end{enumerate}
		\label{asm:feasibility}
	\end{assumption}
	\noindent
	Note that in \eqref{eq:objectives} the cost incurred by player $i$ not only depends on his actions but also by the actions of other players $-i$. So, \eqref{eq:statedynamics}-\eqref{eq:objectives} constitutes a dynamic game with inequality constraints, and we refer to it as NZDG from here on. Then, the Nash equilibrium strategies of players, denoted by $(\mathbf{\tilde{u}}^*,\mathbf{\tilde{v}}^*)$, for this class of games is defined as follows.
	\begin{definition}[Nash equilibrium] The strategy profile $(\tilde{\mathbf{u}}^*,\tilde{\mathbf{v}}^*)$ is Nash equilibrium if for every player $i\in \mathcal N$ the strategies $(\tilde{\mathbf{u}}^*,\tilde{\mathbf{v}}^*)$ solves  
		\begin{align}
		\min_{(\tilde{u}^i,\tilde{v}^i)}J^{i}(x_0,(\tilde{u}^i,{\tilde{u}^{-i*}}),(\tilde{v}^i,{\tilde{v}^{-i*}})) \text{ subject to \eqref{eq:statedynamics} and \eqref{eq:constraints}}.
		\end{align}	
		\label{def:NE}
	\end{definition}
	From the above definition the Nash equilibrium strategy is stable against a player's unilateral deviation. In multistage games the interaction environment is dynamic, which is embedded in state variables and their evolution. It is well known  that the Nash equilibrium solution varies with the information used by the players during the (dynamic) decision making process. So, in a dynamic game an information structure must be specified when players design their strategies.  In an open-loop information structure, the players design their strategies using only the knowledge of the time $k$ (and initial state $ {x}_0$). Whereas in a feedback information structure, players design their equilibrium strategies using the knowledge of the state variable. In this paper, we assume open-loop information structure. This implies, that the decision variables entering the dynamics are functions of time. Next, as it is clear from the inequality constraints \eqref{eq:constraints} that the admissible action set of a player depends on the state variable. Therefore player $i$ takes action $v_k^i\in V^i_k( x_k ,v_k^{-i})$ as a function of time $k$, and the feasible set $V^i_k( x_k ,v_k^{-i})$  is parametrized by the state variable and the decision variables of players excluding player $i$; see also \cite{Reddy:15} which considers open-loop information structure for this class of games. 
 \section{Open-loop dynamic potential games}\label{sec: OLDPG}
In this section we introduce the notion of a dynamic potential game and seek to find
conditions under which NZDG is a dynamic potential game. It is well known that the classical potential game is a static concept, (\cite{monderer:96}). A (static) game is said to be a potential game if there exists a potential function, and the minimum of this function provides a pure strategy Nash equilibrium of the game, there by providing refinement (or selection) of Nash equilibria. In other words, when a potential function exists for a game, a Nash equilibrium   can be obtained by solving a minimization problem. So, when we extend this notion of potential game in a dynamic context, it natural to associate an optimal control problem with NZDG. We consider the following
optimal control problem with inequality constraints. 
\begin{subequations} 
\begin{align}
    \mathrm{OCP}:\quad \quad & \min_{\tilde{\mathbf{u}},\tilde{\mathbf{v}}} J( {x}_0,\tilde{\mathbf{u}},\tilde{\mathbf{v}})\label{eq:OCP1}\\
    \text{subject to}~ & {x}_{k+1}=f_{k}( x_k ,\mathbf{u}_k),~ {x}_0 \text{ is given},\label{eq:OCP2}\\
    & h_k( x_k ,\mathbf{v}_k)\geq 0,~\mathbf{v}_k\geq 0, ~ k\in \mathcal K,\label{eq:OCP3}\\
    \text{where}~& J(x_0,\tilde{\mathbf{u}},\tilde{\mathbf{v}})=P_K( x_k ,\mathbf{v}_K)+\sum_{k=0}^{K-1}P_k( x_k ,\mathbf{u}_k,\mathbf{v}_k),\notag 
\end{align}
\label{eq:OCP}
\end{subequations} 
where   $P_k:\mathbb{R}^n \times \mathbb {R}^{m} \times 
\mathbb R^s \rightarrow \mathbb R$ and $P_K:\mathbb{R}^n \times \mathbb {R}^s\rightarrow \mathbb R$ are instantaneous and terminal cost functions, which are continuous and twice continuously differentiable
in their arguments.
\begin{definition}[Open-loop potential difference game] The dynamic game NZDG is referred to as open-loop potential difference game (OLPDG) if there exist cost functions $\{P_k,~k\in \mathcal K\}$ such that the optimal solution of OCP provides an open-loop Nash equilibrium of NZDG. Whenever such cost functions exist, $\{P_k,~k\in \mathcal K\}$ are referred to as potential cost functions.
	\label{def:olpdg}
\end{definition}
In the next two theorems we discuss the necessary and sufficient conditions for 
an admissible pair $(\tilde{\bu}^*,\tilde{\bv}^*) \in \mathcal U \times \mathcal V$ to be an optimal solution of OCP. Towards this end, 
 we define the instantaneous and terminal  Lagrangian functions as follows
\begin{subequations} 
\begin{align}
&\cL_k(x_k,\bu_k,\bv_k,\lambda_{k+1},\mu_k) =P_k(x_k,\bu_k,\bv_k)+\lambda_{k+1}^\prime f_k(x_k,\bu_k)-\mu_k^\prime h_k(x_k,\bv_k),\\
&\cL_K(x_K,\bv_K,\mu_K)=P_K(x_K,v_K)-\mu_K^\prime h_K(x_K,\bv_K).
\end{align}
\label{eq:Lagopt}
\end{subequations} 
\begin{theorem}\label{th:Necessary Cond} 
	Let Assumption \ref{asm:feasibility} holds true.
 	Let $\mathbf{(\tilde{\mathbf{u}}^*, \tilde{\mathbf{v}}^*)}$ be an optimal  admissible pair for OCP, and $\{x_k^*,~k\in \mathcal K\}$ be state trajectory generated by $\tilde{\bu}^*$, the then there exist co-states $\{\lambda_k^*,~k\in \mathcal K\}$  and a  multipliers $\{\mu_k^*,~k\in \mathcal K\}$ such that the following conditions hold true: 
 	\begin{subequations}
 	\begin{align}
 	&\text{for $k\in \mathcal K\backslash \{K\}$}\notag \\
 	\bu_k^*&=\argmin_{\bu_k \in \bU_k}\mathcal L_k(x^*_k,\bu_k,\bv^*_k,\lambda_{k+1}^*,\mu_k^*)\label{eq:nceq1}\\
 	x^*_{k+1}&=\frac{\partial \mathcal L_k}{\partial  \lambda_{k+1}}(x^*_k,\bu^*_k,\bv^*_k,\lambda_{k+1}^*,\mu_k^*),~ x^*_0=x_0 \label{eq:nceq2}\\
 	\lambda^*_k&=\frac{\partial \mathcal L_k}{\partial x_k}(x^*_k,\bu^*_k,\bv^*_k,\lambda_{k+1}^*,\mu_k^*),\label{eq:nceq3a}\\ \lambda^*_K&=\frac{\partial \cL_K}{\partial x_K}(x_K^*,\bv^*_K,\mu_K^*),\label{eq:nceq3b}\\
 	&\text{and for $k\in \mathcal K$}\notag \\
 	& 0\leq h(x_k^*,\bv_k^*)    \perp \mu_k^*	\geq 0 \label{eq:nceq4}\\
 	&0\leq  \frac{\partial \mathcal L_k}{\partial \bv_k} (x^*_k,\bu^*_k,\bv^*_k,\lambda_{k+1}^*,\mu_k^*) \perp \bv^*_k	\geq 0.\label{eq:nceq5}
 	\end{align}
 	\label{eq:nessopt}
 	\end{subequations} 
 \end{theorem}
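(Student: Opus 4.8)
The plan is to recognize OCP as a finite-dimensional nonlinear program (NLP) in the stacked variables $(\tilde{\bu},\tilde{\bv},x_1,\dots,x_K)$ and to read the asserted conditions off its Karush--Kuhn--Tucker (KKT) system, with the co-states $\lambda_k$ and the multipliers $\mu_k$ playing the role of the KKT multipliers attached, respectively, to the dynamics and to the inequality constraints \eqref{eq:OCP3}. Concretely, I would treat $\{x_{k+1}=f_k(x_k,\bu_k)\}_{k=0}^{K-1}$ as equality constraints (free-sign multipliers $\lambda_{k+1}$), $\{h_k(x_k,\bv_k)\ge 0\}_{k\in\mathcal K}$ as inequality constraints (multipliers $\mu_k\ge 0$), $\{\bv_k\ge 0\}_{k\in\mathcal K}$ as inequality constraints (multipliers $\nu_k\ge 0$), and $\{\bu_k\in\bU_k\}$ as abstract set constraints. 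The first step is to verify a constraint qualification at $(\tilde{\bu}^*,\tilde{\bv}^*,x_1^*,\dots,x_K^*)$: the Jacobian of the equality-constraint map with respect to $(x_1,\dots,x_K)$ is block lower-triangular with identity blocks on the diagonal, hence of full row rank for every admissible $\tilde{\bu}$; combining this with the full-rank hypothesis on $\partial h_k/\partial\bv_k$ from Assumption~\ref{asm:feasibility} and the nonemptiness/convexity of the feasible sets $V_k^i(x_k,v_k^{-i})$ yields a Mangasarian--Fromovitz-type qualification, so KKT multipliers $(\lambda_k^*,\mu_k^*,\nu_k^*)$ exist.

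The second step is bookkeeping. Writing the NLP Lagrangian as $\sum_{k=0}^{K-1}\big[P_k+\lambda_{k+1}^\prime(f_k-x_{k+1})-\mu_k^\prime h_k-\nu_k^\prime\bv_k\big]+P_K-\mu_K^\prime h_K-\nu_K^\prime\bv_K$ and collecting the terms containing $x_k$, one sees that, up to the coupling terms $-\lambda_{k+1}^\prime x_{k+1}$, this Lagrangian equals $\sum_k\cL_k+\cL_K$ from \eqref{eq:Lagopt} minus the $\nu$-terms. Stationarity in $x_k$ for $k\in\mathcal K\backslash\{K\}$ then gives $\lambda_k^*=\partial P_k/\partial x_k+(\partial f_k/\partial x_k)^\prime\lambda_{k+1}^*-(\partial h_k/\partial x_k)^\prime\mu_k^*=\partial\cL_k/\partial x_k$, which is \eqref{eq:nceq3a}, and stationarity in $x_K$ gives \eqref{eq:nceq3b}; primal feasibility of the dynamics is exactly \eqref{eq:nceq2}. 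Stationarity in $\bv_k$ together with $\nu_k^*\ge 0$ and complementary slackness $\nu_k^{*\prime}\bv_k^*=0$ yields $\nu_k^*=\partial\cL_k/\partial\bv_k\ge 0$ and $\bv_k^{*\prime}(\partial\cL_k/\partial\bv_k)=0$, i.e.\ \eqref{eq:nceq5}, while complementary slackness for $h_k\ge 0$ gives \eqref{eq:nceq4}. Finally, the variational inequality expressing stationarity of the Lagrangian in $\bu_k$ over $\bU_k$, namely $\langle\partial\cL_k/\partial\bu_k,\,\bu_k-\bu_k^*\rangle\ge 0$ for all $\bu_k\in\bU_k$, is the first-order optimality condition for $\bu_k^*=\argmin_{\bu_k\in\bU_k}\cL_k(x_k^*,\bu_k,\bv_k^*,\lambda_{k+1}^*,\mu_k^*)$, i.e.\ \eqref{eq:nceq1}.

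I expect the delicate point to be condition \eqref{eq:nceq1}: the KKT system only delivers the variational inequality above, whereas the stated $\argmin$ is genuinely stronger unless $\cL_k$ is convex in $\bu_k$ on $\bU_k$ (which holds, e.g., when $\bU_k$ is the whole space so that \eqref{eq:nceq1} collapses to $\partial\cL_k/\partial\bu_k=0$, or under the linear-quadratic structure of Section~\ref{sec:LQDG}); the write-up should therefore either invoke such structural convexity or state \eqref{eq:nceq1} in its variational-inequality form. A secondary technical hurdle is making the constraint-qualification argument precise, since Assumption~\ref{asm:feasibility} constrains the sets $U_k^i$ only implicitly through convexity of $X_k$; here the triangular structure of the dynamics is what rescues the argument, as it makes the equality block automatically regular and decouples the qualification from the $\bu$-variables, leaving only the already-assumed full-rank condition on $\partial h_k/\partial\bv_k$ to handle the genuine inequality constraints.
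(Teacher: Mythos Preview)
Your approach is correct and considerably more detailed than the paper's own proof, which consists of a single sentence: ``The necessary conditions follow directly by applying the discrete-time maximum principle.'' In other words, the paper treats the whole theorem as a black-box citation, whereas you derive the conditions from scratch by casting OCP as a finite-dimensional NLP and reading off the KKT system. What you wrote is essentially a proof \emph{of} the discrete-time maximum principle specialized to the constraint structure of OCP, so the two are compatible; your route simply unpacks what the paper takes for granted, and the triangular structure of the dynamics Jacobian that you exploit for the constraint qualification is exactly the standard mechanism by which the NLP-KKT derivation produces the recursive co-state equation.

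Your caveat about \eqref{eq:nceq1} is well taken and, if anything, sharper than the paper's treatment. The KKT route (and, in fact, the discrete-time maximum principle in its general form) only yields the stationarity/variational-inequality condition in $\bu_k$, not a genuine global $\argmin$ over $\bU_k$; the latter requires convexity of $\cL_k$ in $\bu_k$, which the paper never states explicitly at this point but tacitly relies on (and which does hold in the linear-quadratic specialization of Section~\ref{sec:LQDG}). The paper sidesteps this by invoking the maximum principle without comment. Your suggestion to either record the variational-inequality form of \eqref{eq:nceq1} or to impose convexity in $\bu_k$ would make the statement cleaner; for the purposes of the remainder of the paper, only stationarity in $\bu_k$ is actually used (see \eqref{eq:OCP Cond1} and \eqref{eq:LQOCP Cond1}), so no downstream argument is affected.
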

\begin{proof}The necessary conditions follow directly by applying the discrete-time maximum principle.
\end{proof}
 The following theorem provides conditions under which \eqref{eq:nceq1}-\eqref{eq:nceq5} are also sufficient for optimality of  $\mathbf{(\tilde{\mathbf{u}}^*, \tilde{\mathbf{v}}^*)}$.
\begin{theorem}\label{thm:suff1} Let  {Assumption \ref{asm:feasibility}} holds true.
Let the pair of control strategies $(
\tilde{\bu}^*,\tilde{\bv}^*)\in \mathcal U \times \mathcal V$ and the collection of trajectories $\{x_k^*,\lambda^*_k,\mu^*_k,~k
\in  \mathcal K\}$ satisfy \eqref{eq:nessopt}. Assume that the Lagrangian $\cL_k(x_k,\bu_k,\bv_k,\lambda_{k+1},\mu_k)$ has a minimum with respect to $(\bu_k,\bv_k)$ for all $k\in \mathcal K\backslash \{K\}$. Let the  minimized Lagrangian be given by
 $\cL_k^*(x_k,\lambda_{k+1},\mu_k)=\min_{(\bu_k,\bv_k)}\cL_k(x_k,\bu_k,\bv_k,\lambda_{k+1},\mu_k)$ for $k\in \mathcal K\backslash \{K\}$. Assume that terminal Lagrangian $\cL_K(x_K,\bv_K,\mu_K)$ has a minimum with respect to $\bv_K$, and denote the minimum terminal cost function by $
 \cL^*_K(x_K,\mu_K)=\min_{\bv_K} \cL_K(x_K,\bv_K,\mu_K)$. Then, if $\cL_k^*(x_k,\lambda_{k+1},\mu_k)$ is convex with respect to $x_k$ for all $k\in \mathcal K\backslash \{K\}$ and $\cL^*_K(x_K,\mu_K)$ is convex with respect to $x_K$, then the pair $(\tilde{\bu}^*,\tilde{\bv}^*)$ is optimal for OCP. 
\end{theorem}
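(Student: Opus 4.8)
The plan is a verification (sufficiency) argument. Let $(\tilde{\bu},\tilde{\bv})\in\mathcal U\times\mathcal V$ be an arbitrary admissible pair and $\{x_k\}$ the state trajectory it generates from $x_0$ via \eqref{eq:OCP2}; the goal is to show $J(x_0,\tilde{\bu},\tilde{\bv})\ge J(x_0,\tilde{\bu}^*,\tilde{\bv}^*)$, keeping the candidate co-states $\{\lambda_k^*\}$ and multipliers $\{\mu_k^*\}$ fixed and using them for \emph{both} trajectories. First I would rewrite the stage and terminal costs through the Lagrangians \eqref{eq:Lagopt}: since $f_k(x_k,\bu_k)=x_{k+1}$ along the admissible trajectory, $P_k(x_k,\bu_k,\bv_k)=\cL_k(x_k,\bu_k,\bv_k,\lambda_{k+1}^*,\mu_k^*)-\lambda_{k+1}^{*\prime}x_{k+1}+\mu_k^{*\prime}h_k(x_k,\bv_k)$ and $P_K(x_K,\bv_K)=\cL_K(x_K,\bv_K,\mu_K^*)+\mu_K^{*\prime}h_K(x_K,\bv_K)$, and the same identities hold along the starred trajectory. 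Subtracting, $J(x_0,\tilde{\bu},\tilde{\bv})-J(x_0,\tilde{\bu}^*,\tilde{\bv}^*)$ decomposes into a sum of Lagrangian differences, a co-state term $\sum_{k=0}^{K-1}\lambda_{k+1}^{*\prime}(x_{k+1}^*-x_{k+1})$, and the multiplier term $\sum_{k\in\mathcal K}\mu_k^{*\prime}\big(h_k(x_k,\bv_k)-h_k(x_k^*,\bv_k^*)\big)$.

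Next I would lower-bound the Lagrangian differences. Using \eqref{eq:nceq1}, \eqref{eq:nceq5}, and the separable structure of Assumption \ref{asm:feasibility}(3) — which makes $\cL_k$ split additively into a part depending on $(x_k,\bu_k)$ and a part depending on $(x_k,\bv_k)$, so that minimizing over $\bu_k$ and over $\bv_k$ decouples — the pair $(\bu_k^*,\bv_k^*)$ attains $\cL_k^*(x_k^*,\lambda_{k+1}^*,\mu_k^*)$ and $\bv_K^*$ attains $\cL_K^*(x_K^*,\mu_K^*)$. Hence $\cL_k(x_k,\bu_k,\bv_k,\lambda_{k+1}^*,\mu_k^*)-\cL_k(x_k^*,\bu_k^*,\bv_k^*,\lambda_{k+1}^*,\mu_k^*)\ge\cL_k^*(x_k,\lambda_{k+1}^*,\mu_k^*)-\cL_k^*(x_k^*,\lambda_{k+1}^*,\mu_k^*)$, and similarly at the terminal stage. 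By convexity of $\cL_k^*$ in $x_k$ together with the envelope identity $\partial\cL_k^*/\partial x_k(x_k^*,\lambda_{k+1}^*,\mu_k^*)=\partial\cL_k/\partial x_k(x_k^*,\bu_k^*,\bv_k^*,\lambda_{k+1}^*,\mu_k^*)=\lambda_k^*$ (from \eqref{eq:nceq3a}), and the analogous facts at $k=K$ with \eqref{eq:nceq3b}, I obtain $\cL_k^*(x_k,\cdot)-\cL_k^*(x_k^*,\cdot)\ge\lambda_k^{*\prime}(x_k-x_k^*)$ for every $k\in\mathcal K$.

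Plugging these bounds in, the co-state contributions telescope: $\sum_{k=0}^{K-1}\big[\lambda_k^{*\prime}(x_k-x_k^*)-\lambda_{k+1}^{*\prime}(x_{k+1}-x_{k+1}^*)\big]+\lambda_K^{*\prime}(x_K-x_K^*)=\lambda_0^{*\prime}(x_0-x_0^*)=0$ because $x_0=x_0^*$. What remains is $J(x_0,\tilde{\bu},\tilde{\bv})-J(x_0,\tilde{\bu}^*,\tilde{\bv}^*)\ge\sum_{k\in\mathcal K}\mu_k^{*\prime}h_k(x_k,\bv_k)$, after dropping the terms $\mu_k^{*\prime}h_k(x_k^*,\bv_k^*)=0$ that vanish by complementary slackness \eqref{eq:nceq4}. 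Since the admissible pair satisfies \eqref{eq:OCP3}, i.e. $h_k(x_k,\bv_k)\ge0$, and $\mu_k^*\ge0$, each summand is nonnegative, so $J(x_0,\tilde{\bu},\tilde{\bv})\ge J(x_0,\tilde{\bu}^*,\tilde{\bv}^*)$; as $(\tilde{\bu},\tilde{\bv})$ was arbitrary, $(\tilde{\bu}^*,\tilde{\bv}^*)$ is optimal for OCP.

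I expect the delicate step to be the claim that $(\bu_k^*,\bv_k^*)$ actually realizes the minimized Lagrangian $\cL_k^*$ and that $\lambda_k^*$ serves as a (sub)gradient of $\cL_k^*$ at $x_k^*$. The first relies on separability so that \eqref{eq:nceq5} is a genuine optimality condition for the subproblem $\min_{\bv_k\ge0}\cL_k(x_k^*,\bu_k^*,\bv_k,\lambda_{k+1}^*,\mu_k^*)$, for which one implicitly needs $\cL_k$ to be convex in $\bv_k$ on the feasible set; the second is a routine envelope/Danskin argument that becomes clean once $\cL_k^*$ is known convex with minimizer $(\bu_k^*,\bv_k^*)$ at $x_k^*$ — then $\cL_k^*(\cdot)\le\cL_k(\cdot,\bu_k^*,\bv_k^*,\cdot)$ with equality at $x_k^*$, and the smoothness from Assumption \ref{asm:feasibility}(4) forces differentiability of $\cL_k^*$ at $x_k^*$ with gradient $\lambda_k^*$. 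Everything else is the bookkeeping of the telescoping sum.
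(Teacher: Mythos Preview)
Your proposal is correct and follows essentially the same route as the paper's proof: rewrite the cost difference through the Lagrangians with the fixed multipliers $(\lambda_{k+1}^*,\mu_k^*)$, lower-bound by the minimized Lagrangians, invoke the envelope theorem to identify $\partial\cL_k^*/\partial x_k$ at $x_k^*$ with $\lambda_k^*$, use convexity for the linear lower bound, telescope, and conclude via complementary slackness and feasibility. Your closing remark about the delicate step---that \eqref{eq:nceq1} and \eqref{eq:nceq5} together with separability must actually yield $\cL_k(x_k^*,\bu_k^*,\bv_k^*,\lambda_{k+1}^*,\mu_k^*)=\cL_k^*(x_k^*,\lambda_{k+1}^*,\mu_k^*)$---is in fact more explicit than the paper, which silently passes from the necessary conditions to this equality.
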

\begin{proof}
	For any admissible $(\tilde{\bu},\tilde{\bv})\in \mathcal U\times \mathcal V$ we consider the difference
	\begin{align}
	J(x_0,\tilde{\bu},\tilde{\bv})-J(x_0,\tilde{\bu}^*,\tilde{\bv}^*)=&P_K( {x}_K,\mathbf{v}_K)+\sum_{k=0}^{K-1}P_k( {x}_k,\mathbf{u}_k,\mathbf{v}_k) -P_K( {x}^*_K,\mathbf{v}^*_K)-\sum_{k=0}^{K-1}P_k({x}_k^*,\mathbf{u}^*_k,\mathbf{v}^*_k)\notag\\
	=& \cL_K(x_K,\bv_K,\mu_K^*)-\cL_K(x_K^*,\bv^*_K,\mu_K^*)+{\mu_K^*}^\prime\left(h(x_K,\bv_K)-h(x_K^*,\bv^*_K)\right)\notag \\ +& \sum_{k=0}^{K-1} \cL_k(x_k,\bu_k,\bv_k,\lambda^*_{k+1},\mu^*_k) -\cL_k(x^*_k,\bu^*_k,\bv^*_k,\lambda^*_{k+1},\mu^*_k)\notag \\+&\sum_{k=0}^{K-1}
	-{\lambda_{k+1}^*}^\prime(x_{k+1}-x^*_{k+1})+{\mu^*_k}^\prime\left(h(x_k,\bv_k)-h(x_k^*,\bv_k^*)\right)\notag \\
	\geq &\cL_K^*(x_K,\mu_K^*)-\cL_K^*(x_K^*,\mu_K^*)+ \sum_{k=0}^{K-1}-{\lambda_{k+1}^*}^\prime (x_{k+1}-x^*_{k+1}) \notag \\ +&\sum_{k=0}^{K-1}\cL_k^*(x_k,\lambda_{k+1}^*,\mu_k^*)-\cL^*(x_k^*,\lambda_{k+1}^*,\mu_k^*)\notag\\ +&\sum_{k=0}^K {\mu_k^*}^\prime \left(h(x_k,\bv_k)-h(x_k^*,\bv_k^*)\right).
	\label{eq:objdiff}
	\end{align}
	\noindent
	From the envelope theorem we have $\frac{\partial \cL^*_k}{\partial x_k}(x_k^*,\lambda_{k+1}^*,\mu_k^*)=\frac{\partial \cL_k}{\partial x_k}(x_k^*,\bu_k^*,\bv_k^*,\lambda_{k+1}^*,\mu_k^*)=
	\lambda_k^*$
	and $\frac{\partial \cL^*_K}{\partial x_K}(x_K^*,\mu_K^*)=
	\frac{\partial \cL_K}{\partial x_K}(x_K^*,\bv_K^*,\mu_K^*)=\lambda_K^*$. Then using this and from the convexity of minimized Lagrangian and terminal Lagrangian with respect to state variables we get
	\begin{subequations} 
	\begin{align}
	&\cL_k^*(x_k,\lambda_{k+1}^*,\mu_k^*)-\cL_k^*(x_k^*,\lambda_{k+1}^*,\mu_k^*)\geq \left(\frac{\partial \cL^*_k}{\partial x_k}(x_k^*,{\lambda_{k+1}}^*,\mu_k^*)\right)^\prime (x_k-x_k^*)={\lambda_k^*}^\prime(x_k-x_k^*), \\
		&\cL_K^*(x_K,\mu_K^*)-\cL_K^*(x_K^*,\mu_K^*)\geq \left(\frac{\partial \cL^*_K}{\partial x_K}(x_K^*,\mu_K^*)\right)^\prime (x_K-x_K^*)=
		{\lambda_K^*}^\prime(x_K-x_K^*).
\end{align}
\label{eq:costateenv}
\end{subequations} 
Using \eqref{eq:costateenv} in \eqref{eq:objdiff} we get
\begin{align*}
J(x_0,\tilde{\bu},\tilde{\bv})-J(x_0,\tilde{\bu}^*,\tilde{\bv}^*)&\geq 
\sum_{k=0}^K {\lambda_{k}^*}^\prime(x_{k}-x^*_{k})-\sum_{k=0}^{K-1}{\lambda_{k+1}^*}^\prime (x_{k+1}-x^*_{k+1}) +\sum_{k=0}^K {\mu_k^*}^\prime \left(h(x_k,\bv_k)-h(x_k^*,\bv_k^*)\right)\\
&={\lambda_0^*}^\prime (x_0-x_0^*)+\sum_{k=0}^K {\mu_k^*}^\prime \left(h(x_k,\bv_k)-h(x_k^*,\bv_k^*)\right)\\ &=\sum_{k=0}^K {\mu_k^*}^\prime h(x_k,\bv_k) \geq 0 \hfill.
\end{align*}
The last but one equality follows from  the
complementary condition ${\mu_k^*}^\prime h_k(x_k^*,\bv_k^*)=0$ for $k\in \mathcal K$, and
the initial condition $x_0=x_0^*$. The last inequality follows as multipliers and constraints satisfy
the conditions $\mu_k^*\geq0$ and  $h_k(x_k,\bv_k)\geq 0$ for all $k\in \mathcal K$.
\end{proof} 
\begin{remark}
	 The sufficient condition provided in Theorem \ref{thm:suff1} is an adaptation of 
	 Arrow type sufficient condition \cite[Theorem 3.30]{Grass:08} to a discrete-time setting with inequality constraints, and differs from the nonlinear programming based methods ( \cite{Pearson:66}).
\end{remark}

\subsection{Structure of OLPDG}
\label{sec:structure}
In this subsection we provide conditions under which the optimal solution of OCP provides
an open-loop Nash equilibrium of NZDG. Toward this end, we have the following assumption.
\begin{assumption} 
The cost functions $\{P_k,~k\in \mathcal K\}$ associated with OCP satisfy the following conditions for every $i\in \mathcal N$ 
\begin{subequations} 
\begin{align} 
&\frac{\partial P_k}{\partial u_{k}^{i}}  = \frac{\partial gu_{k}^{i}}{\partial u_{k}^{i}},~ \frac{\partial P_k}{\partial v_{k}^{i}}  = \frac{\partial gv_{k}^{i}}{\partial v_{k}^{i}},~\frac{\partial P_k}{\partial  {x}_{k}}  = \frac{\partial g_{k}^{i}}{\partial  {x}_{k}},~k\in \mathcal K\backslash \{K\},  \label{eq:gradcond1}\\
&\frac{\partial P_K}{\partial  {x}_{K}} =  \frac{\partial g_{K}^{i}}{\partial  {x}_{K}},~
 \frac{\partial P_K}{\partial \mathbf{v}_{K}^i} =  \frac{\partial g_{K}^{i}}{\partial \mathbf{v}_{K}^i}. 
 \label{eq:gradcond2}
\end{align}
\label{eq:gradconditions} 
\end{subequations}  
\label{asm:gradient}     
\end{assumption}
\begin{lemma} \label{lem:Difference Condition} Let Assumption \ref{asm:gradient} holds true.
	Then, the cost functions of the OCP and NZDG satisfy the following relation for each player $i\in \mathcal N$.
\begin{align}
     J(x_0,(\tilde{u}^i,\tilde{u}^{-i}),(\tilde{v}^i,\tilde{v}^{-i})) - J(x_0,(\tilde{w}^i,\tilde{u}^{-i}),(\tilde{z}^i,\tilde{v}^{-i})) &=J^{i}(x_0,(\tilde{u}^i,\tilde{u}^{-i}),(\tilde{v}^i,\tilde{v}^{-i}))\notag \\ &- J^{i}(x_0,(\tilde{w}^i,\tilde{u}^{-i}),(\tilde{z}^i,\tilde{v}^{-i})) 
    \label{eq:Difference Condition}
\end{align} 
$\forall \tilde{u}^i:=\{u^i_k\in U_k^i,~k\in \mathcal{K}\backslash \{K\}\},~\forall\tilde{w}^i:=\{w^i_k\in U_k^i,~k\in \mathcal{K}\backslash \{K\}\},~ \forall \tilde{v}^i:=\{v^i_k \in V_k^i,~k\in \mathcal K \}$ and $\forall \tilde{z}^i:=\{z^i_k \in V_k^i,~k\in \mathcal K \}$.
\end{lemma}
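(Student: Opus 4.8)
The statement is precisely the assertion that NZDG is an exact potential game in the dynamic sense, with potential equal to the cost $J$ of OCP. The plan is to show that, once the opponents' strategies $(\tilde u^{-i},\tilde v^{-i})$ are held fixed, the map $(\tilde u^i,\tilde v^i)\mapsto J(x_0,(\tilde u^i,\tilde u^{-i}),(\tilde v^i,\tilde v^{-i}))-J^{i}(x_0,(\tilde u^i,\tilde u^{-i}),(\tilde v^i,\tilde v^{-i}))$ is \emph{constant}; equation \eqref{eq:Difference Condition} is then nothing but the equality of this constant function at the two points $(\tilde u^i,\tilde v^i)$ and $(\tilde w^i,\tilde z^i)$, after rearranging. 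Fix $i\in\mathcal N$ and the opponents' strategies. For a control sequence $\tilde u^i=\{u^i_k\}$ let $\{x_k(\tilde u^i)\}$ be the trajectory generated by \eqref{eq:statedynamics} from $x_0$ with player $i$ using $\tilde u^i$ and the others using $\tilde u^{-i}$. The key structural observation is that this trajectory map depends only on the $f_k$ and on $\tilde u^{-i}$ --- not on any cost function --- and that the $v$-variables do not enter it. By Assumption~\ref{asm:feasibility}(4) together with the smoothness of the $P_k$, all data are $C^2$ on the ambient Euclidean spaces, so $x_k(\cdot)$ is $C^1$ and $J,J^{i}$, viewed as functions of $(\tilde u^i,\tilde v^i)$, are $C^1$; it therefore suffices to show that their difference has identically zero gradient in $(\tilde u^i,\tilde v^i)$, since the ambient domain is connected (this is harmless even if $U^i_k,V^i_k$ are not convex, because the functions extend smoothly to the whole Euclidean space).

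Next I would differentiate. Since $u^i_\ell$ ($\ell\in\mathcal K\backslash\{K\}$) enters $P_\ell$ directly and influences $x_{\ell+1},\dots,x_K$ (but $x_\ell$ does not depend on $u^i_\ell$, and no $P_k$ with $k\neq\ell$ contains $u^i_\ell$ except through the state), the chain rule gives, with the differentiation conventions of \cite{lutkepohl:96},
\[
\frac{\partial J}{\partial u^i_\ell}=\frac{\partial P_\ell}{\partial u^i_\ell}+\sum_{k=\ell+1}^{K}\frac{\partial P_k}{\partial x_k}\,\frac{\partial x_k}{\partial u^i_\ell},
\qquad
\frac{\partial J^{i}}{\partial u^i_\ell}=\frac{\partial gu^i_\ell}{\partial u^i_\ell}+\sum_{k=\ell+1}^{K}\frac{\partial g^{i}_k}{\partial x_k}\,\frac{\partial x_k}{\partial u^i_\ell},
\]
where in the second expansion I used Assumption~\ref{asm:feasibility}(3) to write $\partial g^{i}_\ell/\partial u^i_\ell=\partial gu^i_\ell/\partial u^i_\ell$, and the convention $P_K=P_K(x_K,\mathbf v_K)$, $g^{i}_K=g^{i}_K(x_K,\mathbf v_K)$. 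The trajectory sensitivities $\partial x_k/\partial u^i_\ell$ are the \emph{same} matrices in both lines, so subtracting yields
\[
\frac{\partial (J-J^{i})}{\partial u^i_\ell}=\Big(\frac{\partial P_\ell}{\partial u^i_\ell}-\frac{\partial gu^i_\ell}{\partial u^i_\ell}\Big)+\sum_{k=\ell+1}^{K}\Big(\frac{\partial P_k}{\partial x_k}-\frac{\partial g^{i}_k}{\partial x_k}\Big)\frac{\partial x_k}{\partial u^i_\ell}=0,
\]
since the first bracket vanishes by \eqref{eq:gradcond1}, the brackets for $k\le K-1$ vanish by \eqref{eq:gradcond1}, and the $k=K$ bracket vanishes by \eqref{eq:gradcond2}. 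For the $v$-variables the argument is shorter: $v^i_k$ does not enter the dynamics, so for $k<K$ one has $\partial J/\partial v^i_k=\partial P_k/\partial v^i_k=\partial gv^i_k/\partial v^i_k=\partial g^{i}_k/\partial v^i_k=\partial J^{i}/\partial v^i_k$ by \eqref{eq:gradcond1} and Assumption~\ref{asm:feasibility}(3), while for $k=K$ the identity $\partial J/\partial v^i_K=\partial P_K/\partial v^i_K=\partial g^{i}_K/\partial v^i_K=\partial J^{i}/\partial v^i_K$ is exactly \eqref{eq:gradcond2} (and $x_K$ does not depend on $v^i_K$). Hence $\nabla_{(\tilde u^i,\tilde v^i)}(J-J^{i})\equiv 0$.

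Finally, a connected-domain argument (equivalently, integrating the vanishing derivative of $t\mapsto J-J^{i}$ along the straight-line path from $(\tilde u^i,\tilde v^i)$ to $(\tilde w^i,\tilde z^i)$) shows $J-J^{i}$ is constant in player $i$'s strategy, and equating its values at the two endpoints and rearranging gives \eqref{eq:Difference Condition}. I expect the only delicate point to be the accounting of the downstream state sensitivities $\partial x_k/\partial u^i_\ell$: these are genuinely nontrivial (products of the Jacobians $\partial f_j/\partial x_j$ with $\partial f_\ell/\partial u^i_\ell$), but the point of Assumption~\ref{asm:gradient} is precisely that they never need to be computed --- because $P_k$ and player $i$'s cost $g^{i}_k$ have the same state-gradient at \emph{every} stage $k$, the entire "state channel" contributes identically to $J$ and to $J^{i}$ and cancels upon subtraction, and the only surviving "direct" terms are killed by the matching control-gradients in \eqref{eq:gradcond1}--\eqref{eq:gradcond2}.
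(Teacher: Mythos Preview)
Your argument is correct, and it reaches the conclusion by a somewhat different route than the paper. The paper works \emph{stage by stage}: it observes that Assumption~\ref{asm:gradient} forces the partial derivatives of the stage-wise difference $P_k(x_k,\mathbf u_k,\mathbf v_k)-g_k^i(x_k,\mathbf u_k,\mathbf v_k)$ with respect to each of $x_k$, $u_k^i$, $v_k^i$ to vanish, so this difference is a function $\Theta_k^i(u_k^{-i},v_k^{-i})$ of the opponents' stage-$k$ controls alone (and in particular is insensitive to the state). Since $\Theta_k^i$ does not see $x_k$, the fact that the two control sequences $(\tilde u^i,\tilde v^i)$ and $(\tilde w^i,\tilde z^i)$ generate different trajectories is immaterial; the stage differences coincide and one simply sums over $k$. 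Your approach instead substitutes the trajectory map $x_k(\tilde u^i)$ into $J$ and $J^i$ from the outset and differentiates the composite cost in player $i$'s controls, using the chain rule to track the ``state channel'' contributions explicitly and Assumption~\ref{asm:gradient} to cancel them. Both arguments hinge on the same gradient identities, but the paper's version yields the slightly stronger intermediate fact that $P_k-g_k^i$ is independent of $x_k$ (which connects cleanly to the static exact-potential characterization of Monderer--Shapley), whereas your version makes the role of the dynamics more transparent and would adapt more readily to situations where the stage costs are entangled across time.
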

\begin{proof}
	From \eqref{eq:gradcond1} and the separable structure of cost functions as assumed in Assumption \ref{asm:feasibility}, we have
	for every $k\in \mathcal K\backslash \{K\}$
	\begin{subequations} 
		\begin{align} 
		&\frac{\partial }{\partial u_k^i}\left(P_k(x_k,\mathbf{u}_k,\mathbf{v}_k)-g_k^i(x_k,\mathbf{u}_k,\mathbf{v}_k)\right) = \frac{\partial }{\partial u_k^i}\left(P_k(x_k,\mathbf{u}_k,\mathbf{v}_k)-gu_k^i(x_k,\mathbf{u}_k)\right) =  0, \\
		&\frac{\partial }{\partial v_k^i}\left(P_k(x_k,\mathbf{u}_k,\mathbf{v}_k)-g_k^i(x_k,\mathbf{u}_k,\mathbf{v}_k)\right)= \frac{\partial }{\partial v_k^i}\left(P_k(x_k,\mathbf{u}_k,\mathbf{v}_k)-gv_k^i(x_k,\mathbf{v}_k)\right) =  0, \\
		&\frac{\partial }{\partial x_{k}}\left(P_k(x_k,\mathbf{u}_k,\mathbf{v}_k)-g_k^i(x_k,\mathbf{u}_k,\mathbf{v}_k) \right) = 0.  
		\end{align}
		\label{eq:diffequation1}
	\end{subequations} 
	From \eqref{eq:gradcond2} we have 
	\begin{subequations}
	\begin{align}
	&\frac{\partial }{\partial x_{K}}\left(P_K(x_K,\mathbf{v}_K)-g_K^i(x_K,\mathbf{v}_K) \right) = 0 \\ & 
	\frac{\partial }{\partial v_{K}^i}\left(P_K(x_K,\mathbf{v}_K)-g_K^i(x_K,\mathbf{v}_K) \right) = 0.
		\end{align}
		\label{eq:diffequation2}
	\end{subequations}
	Clearly, from \eqref{eq:diffequation1} it follows that the difference 
	function $P_k(x_k,\bu_k,\bv_k)-g_k^i(x_k,\bu_k,\bv_k)$ is independent of (or does not contain)  the variables $x_k$, $u_k^i$ and $v_k^i$. Similarly, from \eqref{eq:diffequation2} it follows that the 
	difference function $P_K(x_K,\bv_k)-g_K^i(x_K,\bv_K)$ does not contain the variables $x_K$ and $v_K^i$. This implies that these difference functions can be expressed as
	\begin{subequations} 
		\begin{align} 
		& P_k( {x}_k,\mathbf{u}_k,\mathbf{v}_k)-g_k^i( {x}_k,\mathbf{u}_k,\mathbf{v}_k) = \Theta^i_k(u_k^{-i},v_{k}^{-i}),~ \forall k \in \mathcal K \backslash \{K\},\\
		& P_K( {x}_K,\mathbf{v}_K)-g_K^i( {x}_K,\mathbf{v}_K) = \Theta^i_K(v_K^{-i}). 
		\end{align} 
		\label{eq:Lemma1 eq3}
	\end{subequations} 
	$\forall u_k^i \in U_k^i$ and $\forall v_k^i \in V_k^i$. Since \eqref{eq:Lemma1 eq3} is satisfied by every $u_k^i \in U_k^i$ and $v_k^i \in V_k^i$, for any $u_k^i,w_k^i \in U_k^i$ and $v_k^i,z_k^i \in V_k^i$ we obtain 
\begin{align*}
P_k(x_k,(u_k^i,u_k^{-i}),(v_k^i,v_k^{-i}))-g_k^i(x_k,(u_k^i,u_k^{-i}),(v_k^i,v_k^{-i})) &=P_k(x_k,(w_k^i,u_k^{-i}),(z_k^i,v_k^{-i}))\\&-g_k^i(x_k,(w_k^i,u_k^{-i}),(z_k^i,v_k^{-i})),\\
P_K(x_K,(v_K^i,v_K^{-i}))-g_K^i(x_K,(v_K^i,v_K^{-i})) &= P_K(x_K,(z_K^i,v_K^{-i}))-g_K^i(x_K,(z_K^i,v_K^{-i})).	
\end{align*}
Upon rearranging the above equations and adding  we get
\begin{subequations} 
	\begin{align} 
 P_k(x_k,(u_k^i,u_k^{-i}),(v_k^i,v_k^{-i}))- P_k(x_k,(w_k^i,u_k^{-i}),(z_k^i,v_k^{-i}) &=g_k^i(x_k,(u_k^i,u_k^{-i}),(v_k^i,v_k^{-i}))\notag\\&-g_k^i(x_k,(w_k^i,u_k^{-i}),(z_k^i,v_k^{-i})),
	\label{eq:instdiff}\\
	P_K(x_K,(v_K^i,v_K^{-i}))-P_K(x_K,(z_K^i,v_K^{-i}))&=g_K^i(x_K,(v_K^i,v_K^{-i}))-g_K^i(x_K,(z_K^i,v_K^{-i})).\label{eq:terminaldiff}
	\end{align}
	\end{subequations} 
	Taking summation of \eqref{eq:instdiff} for all time steps $k \in \mathcal{K}\backslash \{K\}$ and using \eqref{eq:terminaldiff}, we obtain \eqref{eq:Difference Condition}.
\end{proof}
\begin{remark} Lemma \ref{lem:Difference Condition} provides the dynamic counterpart of the principle of exact potential games introduced by Monderer and Shapely \cite[section 2]{monderer:96}.
\end{remark}
\begin{theorem} Let Assumptions \ref{asm:feasibility} and \ref{asm:gradient} hold true. 
	Let the admissible pair $(\tilde{\bu}^*,\tilde{\bv}^*)$ be the optimal solution of OCP. Then
	$(\tilde{\bu}^*,\tilde{\bv}^*)$ is an open-loop Nash equilibrium of NZDG, that is,	NZDG is an OLPDG with potential functions $\{P_k,~k\in \mathcal K\}$.
	\label{thm:verfication}
\end{theorem}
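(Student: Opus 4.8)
The plan is to obtain, for each player, the best-response inequality of Definition~\ref{def:NE} directly from the exact-potential identity of Lemma~\ref{lem:Difference Condition} together with the global optimality of $(\tilde{\bu}^*,\tilde{\bv}^*)$ in OCP; no further machinery (co-states, convexity, the maximum principle) is needed here.

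First I would observe that an optimal solution of OCP is in particular feasible for OCP, so $(\tilde{\bu}^*,\tilde{\bv}^*)$ satisfies \eqref{eq:OCP2}--\eqref{eq:OCP3}, which are precisely the state dynamics \eqref{eq:statedynamics} and the coupled inequality constraints \eqref{eq:constraints} of NZDG; hence $(\tilde{\bu}^*,\tilde{\bv}^*)$ is an admissible strategy pair in the sense of Assumption~\ref{asm:feasibility}. Next, fix a player $i\in\mathcal N$ and any admissible unilateral deviation, i.e. strategies $\tilde{w}^i=\{w^i_k\in U^i_k\}$ and $\tilde{z}^i=\{z^i_k\in V^i_k\}$ for which the profile $\big((\tilde{w}^i,\tilde{u}^{-i*}),(\tilde{z}^i,\tilde{v}^{-i*})\big)$ still satisfies \eqref{eq:statedynamics}--\eqref{eq:constraints}. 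Since the feasible set of OCP coincides with the set of admissible pairs of NZDG, this deviated profile is feasible for OCP, so optimality of $(\tilde{\bu}^*,\tilde{\bv}^*)$ gives
\[
J\big(x_0,(\tilde{u}^{i*},\tilde{u}^{-i*}),(\tilde{v}^{i*},\tilde{v}^{-i*})\big)\ \le\ J\big(x_0,(\tilde{w}^i,\tilde{u}^{-i*}),(\tilde{z}^i,\tilde{v}^{-i*})\big).
\]

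I would then invoke Lemma~\ref{lem:Difference Condition} with $\tilde{u}^i=\tilde{u}^{i*}$, $\tilde{v}^i=\tilde{v}^{i*}$ and $\tilde{u}^{-i}=\tilde{u}^{-i*}$, $\tilde{v}^{-i}=\tilde{v}^{-i*}$: the left-hand side of \eqref{eq:Difference Condition} is nonpositive by the inequality just displayed, hence the right-hand side is nonpositive as well, i.e.
\[
J^{i}\big(x_0,(\tilde{u}^{i*},\tilde{u}^{-i*}),(\tilde{v}^{i*},\tilde{v}^{-i*})\big)\ \le\ J^{i}\big(x_0,(\tilde{w}^i,\tilde{u}^{-i*}),(\tilde{z}^i,\tilde{v}^{-i*})\big).
\]
As $i$ and the admissible deviation $(\tilde{w}^i,\tilde{z}^i)$ were arbitrary, $(\tilde{\bu}^*,\tilde{\bv}^*)$ solves the player-$i$ problem in Definition~\ref{def:NE} for every $i\in\mathcal N$, so it is an open-loop Nash equilibrium of NZDG, and by Definition~\ref{def:olpdg} NZDG is an OLPDG with potential cost functions $\{P_k,~k\in\mathcal K\}$.

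The only point demanding care — and the place where I would be most careful in writing this out — is the identification of the feasible set of OCP with the admissible profiles of NZDG under a unilateral deviation: because $\tilde{u}^i$ enters the dynamics \eqref{eq:statedynamics}, a deviation by player $i$ changes the state trajectory and hence the constraint values $h_k(x_k,\mathbf{v}_k)$, so ``admissible deviation'' must be understood as one for which the full coupled system \eqref{eq:statedynamics}--\eqref{eq:constraints} (equivalently, $v^i_k\in V^i_k(x_k,v^{-i}_k)$ along the resulting trajectory) continues to hold. With that reading the two feasible sets match exactly and the argument is immediate; there is no genuine analytic obstacle. I would also remark that the whole argument is consistent with the open-loop information structure assumed throughout, since both OCP and Lemma~\ref{lem:Difference Condition} are phrased solely in terms of time-indexed strategies.
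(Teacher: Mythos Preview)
Your proof is correct and rests on exactly the same core argument as the paper: global optimality of $(\tilde{\bu}^*,\tilde{\bv}^*)$ in OCP combined with the exact-potential identity of Lemma~\ref{lem:Difference Condition} yields the best-response inequality for each player. The paper, however, first takes a lengthy detour (equations \eqref{eq:optnesscond1}--\eqref{eq:nessonle2}) through the first-order necessary conditions, showing that the OCP co-states and multipliers $(\lambda_k^*,\mu_k^*)$ can be assigned verbatim to every player's individual problem, so that $(\tilde{u}^{i*},\tilde{v}^{i*})$ is a \emph{candidate} best response, and only then invokes Lemma~\ref{lem:Difference Condition} and optimality to confirm it is an actual best response. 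You correctly recognize that this detour is logically unnecessary for the theorem as stated; the paper includes it chiefly to support the subsequent remark on normalized Nash equilibria (common multipliers across players, equation~\eqref{eq:samemultipliers}). Your version is therefore shorter and more direct, at the cost of not exhibiting that multiplier structure.
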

\begin{proof} Let $\{x^*_k,~k\in \mathcal K\}$ be the state trajectory generated by $\tilde{\bu}^*$. As $(\tilde{\bu}^*,\tilde{\bv}^*)$ is optimal for OCP there exist co-states $\{\lambda^*_k,~k\in \mathcal K\}$ and multipliers
$\{\mu^*_k,~k\in \mathcal K\}$ such that the necessary conditions \eqref{eq:nessopt} hold true. Expanding these conditions in terms of cost functions we get
	\begin{subequations} 
\begin{align}
\intertext{for $~k\in \mathcal K\backslash \{K\}$} 
\label{eq:OCP Cond1} & \frac{\partial P_k}{\partial u_{k}^i}(x^*_k,\bu_k^*,\bv_k^*)+\left(\frac{\partial f_k}{\partial u_k^i}(x^*_k,\bu_k^*)\right)^\prime  \lambda_{k+1}^{*} = 0,~i\in \mathcal N, \\ 
	\label{eq:OCP Cond2} & x_{k+1}^{*} = f_k(x^*_k,\bu_k^*),\quad x_0^*=x_0,  \\
	\label{eq:OCP Cond3} & \lambda_k^*=\frac{\partial P_k}{\partial x_{k}}(x^*_k,\bu_k^*,\bv_k^*)+\left(\frac{\partial f_k}{\partial x_k}(x^*_k,\bu_k^*)\right)^{\prime} \lambda_{k+1}^*\notag \\ &\hspace{1.25in}-\left(\frac{\partial h_k}{\partial x_k}(x_k^*,\bv_k^*)\right)^\prime \mu_k^*, \\
	\label{eq:OCP Cond4} & \lambda_{K}^{*}= \frac{\partial P_K}{\partial x_{K}}(x_K^*,\bv_K^*)-\left(\frac{\partial h_K}{\partial x_K}(x^*_K,\bv_K^*)\right)^\prime \mu_{K}^{*}, \\
	\label{eq:OCP Cond5} & 0 \leq \left(\frac{\partial P_k}{\partial v_{k}^i}(x^*_k,\bu_k^*,\bv_k^*)-\left(\frac{\partial h_k}{\partial v_k^i}(x^*_k,\bv_k^*) \right)^\prime \mu_{k}^{*}\right) \perp v_k^{i^{*}} \notag \\&\hspace{1.85in}\geq 0,~~i\in \mathcal N,\\
	\label{eq:OCP Cond6} & 0 \leq \left(\frac{\partial P_K}{\partial v_{K}^i}(x^*_K,\bv_K^*)-\left(\frac{\partial h_K}{\partial v_K^i}(x^*_K,\bv_K^*)\right)^\prime \mu_{K}^{*}\right) \perp v_K^{i^{*}}\notag \\ &\hspace{1.85in} \geq 0,~i\in \mathcal N, \\
  \text{for}& ~k\in \mathcal K, ~
  0\leq h_k(x^*_k,\bv_k^*)\perp \mu_k^* \geq 0. 	\label{eq:OCP Cond7} 
	\end{align}
	\label{eq:optnesscond1}
	\end{subequations}
Next, we write $\bu_k^*=(u_k^{i*},u_k^{-i*})$ and $\bv_k^*=(v_k^{i*},v_k^{-i*})$ and for each player $i\in \mathcal N$ we define the multipliers 
\begin{align}
{\Lambda_k^{i*}}:=\lambda_k^*,k\in \mathcal K\backslash \{K\},~\delta_k^{i*}:=\mu_k^*,~k\in \mathcal K.
\label{eq:samemultipliers}
\end{align}
Now from Assumption \ref{asm:gradient} and using the above notation we write \eqref{eq:optnesscond1}
as follows.
\begin{subequations}  
	\begin{align}
\text{for} &~k\in \mathcal K\backslash \{K\} \notag\\ 
&\frac{\partial g_k^i(x^*_{k},(u_{k}^{i},u_{k}^{-i*}),(v_{k}^{i},v_{k}^{-i*}))}{\partial u_k^i}\Big|_{u_k^i = u_k^{i*}}+\left(\frac{\partial f_k(x^*_{k},(u_{k}^{i},u_{k}^{-i*}))}{\partial u_k^i}\Big|_{u_k^i = u_k^{i*}}\right)^{\prime}\Lambda_{k+1}^{i*} = 0, 	\label{eq:OLNEcond1} \\
& x_{k+1}^* = f(x_{k}^{*},\mathbf{u}_{k}^{*}),~x_0^*=x_0,  	 \label{eq:OLNEcond2} \\
 \Lambda_k^{i*} &=  \frac{\partial g_k^i(x_{k},(u_{k}^{i*},u_{k}^{-i*}),(v_{k}^{i*},v_{k}^{-i*}))}{\partial x_k}\Big|_{x_k = x_k^{*}}+\left(\frac{\partial f_k(x_{k},(u_{k}^{i*},u_{k}^{-i*}))}{\partial x_k}\Big|_{x_k = x_k^{*}}\right)^{\prime}\Lambda_{k+1}^{i*}\notag \\ 
&\qquad-\left(\frac{\partial h_k(x_{k},(v_{k}^{i*},v_{k}^{-i*}))}{\partial x_k}\Big|_{x_k = x_k^{*}}\right)^{\prime}\delta_{k}^{*},\\
 \Lambda_K^{i*}& =  \frac{\partial g_K^i(x_{K},(v_{K}^{i*},v_{K}^{-i*}))}{\partial x_K}\Big|_{x_K = x_K^{*}} -\left(\frac{\partial h_K(x_{K},(v_{K}^{i*},v_{K}^{-i*}))}{\partial x_K}\Big|_{x_K = x_K^{*}}\right)^{\prime}\delta_K^{i*},\\
&0 \leq \frac{g_k^i(x^*_{k},(v_{k}^{i},v_{k}^{-i*}))}{\partial v_k^i}\Big|_{v_k^i =v_k^{i*}}\notag -\left(\frac{\partial h_k(x^*_{k},(v_{k}^{i},v_{k}^{-i*}))}{\partial v_k^i}\Big|_{v_k^i = v_k^{*}}\right)^{\prime}\delta_k^{i*}  \perp v_k^{i*} \geq 0,\\
& 0 \leq \frac{g_K^i(x^*_{K},(v_{K}^{i},v_{K}^{-i*}))}{\partial v_K^i}\Big|_{v_K^i =v_K^{i*}}-\left(\frac{\partial h_K(x^*_{K},(v_{K}^{i},v_{K}^{-i*}))}{\partial v_K^i}\Big|_{v_K^i = v_K^{*}}\right)^{\prime}\delta_k^{i*}  \perp v_K^{i*}  \geq 0,\\
  \text{for} &~k\in \mathcal K, ~ 0 \leq h_{k}(x_{k}^*,\mathbf{v}_{k}^*) \perp \delta_k^{i*} \geq 0. 
	\end{align}
	\label{eq:nessonle1}
\end{subequations} 

Next, we consider Player $i$'s optimal control problem 
\eqref{eq:constraints} when players in $-i=\mathcal N\backslash \{i\}$ use strategies $(\tilde{u}^{-i*},\tilde{v}^{-i*})$. Taking the co-state vectors as
$\{\Lambda_k^i,~k\in \mathcal K\}$ and Lagrange multipliers as $\{\delta_k^i,~k\in \mathcal K\}$ we write the instantaneous Lagrangian and terminal Lagrangian functions associated with Player $i$'s problem are given as
\begin{subequations} 
	\begin{align}
	&\mathcal L _k^i(x_k,(u_k^i,u_k^{-i*}),(v_k^{i},v_k^{-i*}),\Lambda_{k+1}^{i},\delta_k^{i})=g^i_k(x_k,(u_k^i,u_k^{-i*}),(v_k^{i},v_k^{-i*})) +{\Lambda^i_{k+1}}^\prime f_k(x_k,(u_k^i,u_k^{-i*}))\notag \\&\hspace{4.4in}-{\delta^i_k}^\prime h_k(x_k,(v_k^{i},v_k^{-i*})),\\
&	\cL^i_K(x_K,(v_K^{i},v_K^{-i*}),\mu_K)=g^i_K(x_K,(v_k^{i},v_K^{-i*}))-{\delta^i_K}^\prime h_K(x_K,(v_K^{i},v_K^{-i*})).
	\end{align}
	\label{eq:LagNE}
\end{subequations} 
Representing the equations \eqref{eq:nessonle1} in  terms of the instantaneous Lagrangian and terminal Lagrangian functions \eqref{eq:LagNE} 
associated with Player $i$'s optimal control problem \eqref{eq:constraints} we get  
\begin{subequations}
	\begin{align}
	\text{for } &k\in \mathcal K\backslash \{K\}\notag \\
	&u_k^{i*}=\argmin_{u^i_k\in U^i_k}\mathcal L^i_k(x^*_k,(u_k^i,u_k^{-i*}),(v_k^{i},v_k^{-i*}),\Lambda_{k+1}^{i*},\delta_k^{i*}),  \\
	&x^*_{k+1}=\frac{\partial \cL^i_k}{\partial  \Lambda^{i*}_{k+1}}(x^*_k,(u_k^{i*},u_k^{-i*}),(v_k^{i*},v_k^{-i*}),\Lambda_{k+1}^{i*},\delta_k^{i*}),\quad x^*_0=x_0,\\
	&\lambda^*_k=\frac{\partial \mathcal L^i_k}{\partial x_k}(x^*_k,(u_k^{i*},u_k^{-i*}),(v_k^{i*},v_k^{-i*}),\Lambda_{k+1}^{i*},\delta_k^{i*}),\\& \lambda^*_K=\frac{\partial \cL^i_K}{\partial x_K}(x_K^*,(v_k^{i*},v_k^{-i*}),\delta_K^{i*}),\\
	&0\leq  \frac{\partial \mathcal L^i_k}{\partial v^i_k} (x^*_k,(u_k^{i},u_k^{-i*}),(v_k^{i},v_k^{-i*}),\lambda_{k+1}^*,\mu_k^*) \perp v^{i*}_k	\geq 0,\\
		&0\leq  \frac{\partial \cL^i_K}{\partial v^i_K} (x^*_K,(v_K^{i},v_K^{-i*}),\Lambda_{k+1}^{i*},\delta_K^{i*}) \perp v^{i*}_K	\geq 0,\\
		\text{for } &k\in \mathcal K,~ 0\leq h(x_k^*,(v_k^{i},v_k^{-i*}))    \perp \delta_k^{i*}	\geq 0.
	\end{align}
	\label{eq:nessonle2}
\end{subequations} 
Clearly, \eqref{eq:nessonle2} constitute the necessary conditions for optimality of  associated with 
Player i's optimal control problem \eqref{eq:constraints} where strategies of players in $-i$ are fixed at
$(\tilde{u}^{-i*},\tilde{v}^{-i*})$. In other words, $(\tilde{u}^{i*},\tilde{v}^{i*})$ is a candidate best response to $(\tilde{u}^{-i*},\tilde{v}^{-i*})$. 
Next,   we  show that the strategy $(\tilde{u}^{i*},\tilde{v}^{i*})$  indeed minimizes the objective  $J^i(x_0,(\tilde{u}^{i^*},\tilde{u}^{-i*}),(\tilde{v}^{i^*},\tilde{v}^{-i*}))$ subject to 
the dynamics $x_{k+1}=f_k(x_k,(u_k^i,u_k^{-i*}))$, $k\in \mathcal K \backslash \{K\}$ and constraints $h_k(x_k,(v_k^i,v_k^{-i*}))\geq 0$, $v_k^i\geq 0$, $k\in \mathcal K$. {Since $(\mathbf{\tilde{u}^*,\tilde{v}^*})$ is an optimal solution of the OCP,  the following inequality holds true 
\begin{align}
J(x_0,(\tilde{u}^{i^*},\tilde{u}^{-i*}),(\tilde{v}^{i^*},\tilde{v}^{-i*})) \leq J(x_0,(\tilde{u}^{i},\tilde{u}^{-i*}),(\tilde{v}^{i},\tilde{v}^{-i*})),~\forall (\tilde{u}^i,\tilde{v}^i) 
\label{eq:OCP max}
    \end{align}
 with $\tilde{v}_k^i$ satisfying the constraint $h_k( x_k ,({v}_k^i,{v}_k^{\mbox{-}i^*}))\geq 0 ~\forall k \in \mathcal{K}$, where $ x_k $ is the state trajectory evolved according to the action set $(\tilde{u}^i,\tilde{u}^{\mbox{-}i^*})$.} From lemma \ref{lem:Difference Condition}, we have
\begin{multline*}
 J(x_0,(\tilde{u}^{i^*},\tilde{u}^{-i*}),(\tilde{v}^{i^*},\tilde{v}^{-i*})) -     J(x_0,(\tilde{u}^{i},\tilde{u}^{-i*}),(\tilde{v}^{i},\tilde{v}^{-i*}))\\ =  J^i(x_0,(\tilde{u}^{i^*},\tilde{u}^{-i*}),(\tilde{v}^{i^*},\tilde{v}^{-i*}))-     J^i(x_0,(\tilde{u}^{i},\tilde{u}^{-i*}),(\tilde{v}^{i},\tilde{v}^{-i*})).
  \end{multline*}
 Using the above observation in \eqref{eq:OCP max} we get
  \begin{align}
    J^i(x_0,(\tilde{u}^{i^*},\tilde{u}^{-i*}),(\tilde{v}^{i^*},\tilde{v}^{-i*})) \leq J^i(x_0,(\tilde{u}^{i},\tilde{u}^{-i*}),(\tilde{v}^{i},\tilde{v}^{-i*})) \quad \forall ((\tilde{u}^{i},\tilde{u}^{-i*}),(\tilde{v}^{i},\tilde{v}^{-i*}))
    \end{align} 
 with $\tilde{v}_k^i$ satisfying the constraint $h_k( x_k ,({v}_k^i,{v}_k^{\mbox{-}i^*}))\geq 0 ~\forall k \in \mathcal{K}$.  This implies  $(\tilde{u}^{i^*},\tilde{v}^{i^*})$ is indeed a best response to $(\tilde{u}^{-i*},\tilde{v}^{-i*})$. So, $(\mathbf{\tilde{u}^*,\tilde{v}^*})$ is an open-loop Nash equilibrium of NZDG. Following Definition \ref{def:olpdg} we have NZDG is an OLPDG with potential  functions 
$\{P_k,~k\in \mathcal K\}$.
\end{proof}
\begin{remark} The Nash equilibira correspond to fixed points of best response mapping defined over
	joint strategy sets of players, and as a result there can exist more than one equilibria in a non-comparative game. When OCP associated with OLPDG has an optimal solution, then from Theorem \ref{thm:verfication}   this solution is an open-loop Nash equilibrium of NZDG, thereby providing a refinement of the open-loop Nash equilibria. This implies that, solution of the OCP provides a way for selecting one among possibly many open-loop Nash equiliria.
	\end{remark} 
	
	\begin{remark}  Notice, the constraints in \eqref{eq:constraints} are coupled. Rosen in \cite{Rosen:65} studied non-cooperative games with couple constraints. The equilibria in these games are referred to as normalized Nash equlibria, and are characterized by the property that
		multiplier vector associated with constraints in each player's individual optimization problem is co-linear with a common multiplier vector.  In our work, we observe a similar feature by construction, that is, in \eqref{eq:samemultipliers} the obtained open-loop Nash equilibrium has the property that the associated multipliers and co-state variables are same for all players. 
\end{remark}
\subsection{Construction of potential cost functions} 
In section \ref{sec:structure}, a sufficient condition is provided to verify if NZDG is an OLPDG given the potential cost functions $\{P_k,~k\in \mathcal K\}$ of the OCP. However, in practice these functions are not available before hand. In these settings, it is desirable to construct potential functions using players' cost functions. This construction procedure involves two steps. First, to verify if   NZDG is an OLPDG, and then to construct the potential functions. Toward this end, we recall the following necessary and sufficient condition existence of conservative vector fields from multi-variable calculus (\cite{apostol:69}).
\begin{lemma}[Conservative vector field] Let $\Omega$ be a convex open subset in $\mathbb R^n$. Let $F:\Omega \rightarrow \mathbb R^n$ be a vector field  with continuous derivatives defined over $\Omega$.  The following conditions on $F$ are equivalent
	\begin{enumerate}
		\item[a.]There exists a scalar potential function $\Pi:\Omega\rightarrow \mathbb R$ such that
		$F(\omega)=\nabla \Pi(\omega)$ for all $\omega \in \Omega$, where $\nabla$ denotes the gradient operator.
		\item[b.] The partial derivatives satisfy 
		\begin{align}
		\frac{\partial [F(\omega)]_i}{\partial [\omega]_j} = \frac{\partial [F(\omega)]_j}{\partial [\omega]_i},~\forall \omega \in \Omega,~i,j=1,2,\cdots,n.
		\label{eq:symcond}
		\end{align} 
		\item[c.] Let $a$ be a fixed point in $\Omega$, and $\mathcal{C}\subset \Omega$ be a piecewise smooth curve
		joining $a$ with an arbitrary point $\omega \in \Omega$. Then the potential function $\Pi$ satisfies 
		\begin{align}
		\Pi(\omega) -\Pi(a)=\int_\mathcal{C} F(\omega)\bigcdot d\omega = \int_0^1 F(\alpha(z)) \bigcdot \frac{\partial \alpha}{\partial z} ~dz,
		\end{align}
		where $\bigcdot$ is the dot product, and $\alpha: [0, 1] \rightarrow \mathcal{C}$ is a bijective parametrization of the curve $\mathcal{C}$ such that $\alpha(0)=a$ and $\alpha(1)=\omega$.
\end{enumerate}
A vector field  $F:\Omega\rightarrow \mathbb R^n$ satisfying these conditions is called a conservative vector field.
\label{lem:potential}
\end{lemma}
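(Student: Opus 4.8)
The statement is the classical characterization of gradient (conservative) vector fields --- essentially the Poincar\'e lemma on a convex, hence star-shaped, domain. The plan is to establish the cycle of implications (a)$\Rightarrow$(b), (a)$\Rightarrow$(c), (c)$\Rightarrow$(a), and (b)$\Rightarrow$(a), which together yield the claimed equivalence. Convexity of $\Omega$ will be used only in the last implication, where it guarantees that the straight segment from a fixed base point $a$ to an arbitrary point $\omega\in\Omega$ stays inside $\Omega$; this is precisely what removes any topological obstruction to globally integrating $F$.

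For (a)$\Rightarrow$(b) I would simply invoke equality of mixed second partials: if $F=\nabla\Pi$ then $[F(\omega)]_i=\partial\Pi/\partial[\omega]_i$, and since $F$ has continuous first derivatives $\Pi$ is $C^2$, so $\partial[F(\omega)]_i/\partial[\omega]_j=\partial^2\Pi/(\partial[\omega]_j\,\partial[\omega]_i)$ is symmetric in $i,j$, which is \eqref{eq:symcond}. For (a)$\Rightarrow$(c), given a piecewise smooth $\mathcal C$ from $a$ to $\omega$ with bijective parametrization $\alpha$, the chain rule gives $\frac{d}{dz}\Pi(\alpha(z))=F(\alpha(z))\bigcdot\frac{\partial\alpha}{\partial z}$ on each smooth piece, and summing the one-variable fundamental theorem of calculus over the pieces yields the stated integral formula. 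For (c)$\Rightarrow$(a) I would concatenate a curve from $a$ to $\omega$ with the short segment from $\omega$ to $\omega+t\mathbf{e}_j$ (which lies in $\Omega$ for small $|t|$ by openness), use additivity of the line integral together with the formula in (c) to get $\Pi(\omega+t\mathbf{e}_j)-\Pi(\omega)=t\int_0^1[F(\omega+zt\mathbf{e}_j)]_j\,dz$, then divide by $t$ and let $t\to 0$, invoking continuity of $F$, to conclude $\partial\Pi/\partial[\omega]_j=[F(\omega)]_j$, i.e.\ $F=\nabla\Pi$.

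The substantive step is (b)$\Rightarrow$(a). Here I would fix $a\in\Omega$ and, using convexity, define the candidate potential
\[
\Pi(\omega):=\int_0^1 F\bigl(a+z(\omega-a)\bigr)\bigcdot(\omega-a)\,dz,\qquad \omega\in\Omega .
\]
Differentiating under the integral sign (legitimate by the Leibniz rule, since the first derivatives of $F$ are continuous and $[0,1]$ is compact) produces, for each coordinate $j$, an integrand of the form $[F(a+z(\omega-a))]_j+z\sum_i\frac{\partial[F]_i}{\partial[\omega]_j}(a+z(\omega-a))\,[\omega-a]_i$. Applying the symmetry hypothesis (b) to swap the derivative indices turns the second term into $z\,\frac{d}{dz}[F(a+z(\omega-a))]_j$ by the chain rule, so the whole integrand equals the exact derivative $\frac{d}{dz}\bigl(z\,[F(a+z(\omega-a))]_j\bigr)$; the integral therefore telescopes to $\bigl[z\,[F(a+z(\omega-a))]_j\bigr]_{z=0}^{z=1}=[F(\omega)]_j$. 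Hence $\nabla\Pi=F$ on $\Omega$, establishing (a) and closing the cycle.

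I expect the only genuine obstacle to be bookkeeping: carrying out the differentiation under the integral sign and the chain-rule expansion cleanly, and then spotting that --- once (b) has been used to swap the derivative indices --- the integrand is an exact $z$-derivative. That recognition is the single conceptual point in the whole proof, and it is the sole place where the symmetry condition enters; every other step is routine. The regularity hypotheses ($F$ with continuous derivatives, $\Omega$ convex and open) are exactly what is needed to make each step valid, so no further assumptions are required.
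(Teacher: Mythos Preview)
Your proposal is a correct, self-contained proof of the classical Poincar\'e lemma on a convex domain. The paper, however, does not supply its own argument at all: its entire ``proof'' is the one-line citation ``See \cite[Theorems 10.4, 10.5 and 10.9]{apostol:69}.'' So there is no substantive comparison to make beyond noting that you have actually carried out what the paper outsources. Your cycle of implications, the use of convexity only in the (b)$\Rightarrow$(a) step via the straight-line homotopy, and the recognition that the symmetry condition turns the integrand into an exact $z$-derivative are exactly the standard textbook argument (and indeed essentially what Apostol does in the cited theorems). Nothing is missing or incorrect.
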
 
\begin{proof} See \cite[Theorems 10.4, 10.5 and 10.9]{apostol:69}.
	\end{proof} 
\begin{remark}
	A consequence of the condition \eqref{eq:symcond} is that the Jacobian matrix of the vector field $F:\Omega \rightarrow \mathbb R^n$ evaluated at   $\omega \in \Omega $ is symmetric for all $\omega\in \Omega$. 
\end{remark}
\begin{theorem}	Let Assumption \ref{asm:feasibility}.(1) holds true. Let the players' utility functions satisfy the following conditions, for all $i,j\in \mathcal N$, 
	\begin{subequations}\label{eq:Potential NC}
		\begin{align}  
		& \frac{\partial^2 g_k^i}{\partial (u_k^j)^\prime \partial u_k^i} =\left( \frac{\partial^2 g_k^j}{\partial (u_k^i)^\prime \partial u_k^j}\right)^\prime,~k\in \mathcal K\backslash \{K\}, \label{eq:ux}\\
		&\frac{\partial g_k^i}{\partial x_k} = \frac{\partial g_k^j}{\partial x_k} \label{eq: partial x},~k\in \mathcal K,\\
		& \frac{\partial^2 g_k^i}{\partial (v_k^j)^\prime  \partial v_k^i} = \left(\frac{\partial^2 g_k^j}{\partial (v_k^i)^\prime  \partial v_k^j}\right)^\prime,~k\in \mathcal K, \label{eq:vxx}
		\end{align}
	\end{subequations}
	then NZDG is a OLPDG. 
	Let the vector fields $F_k({x}_k,\mathbf{u}_k,\mathbf{v}_k)$ at $(x_k,\bu_k,\bv_k)\in {\Omega}_k$, where ${\Omega}_k := {X}_k \times \prod_{i \in \mathcal{N}}U_k^i \times \prod_{i \in \mathcal{N}}V_k^i$ of dimension $(n+m+s) \times 1$ and $F_K({x}_K,\mathbf{v}_K)$ at $(x_K,\bv_K)\in {\Omega}_K$ where  $\Omega_K:=  {X}_K \times \prod_{i \in \mathcal{N}}V_K^i$ of dimension $(n+s) \times 1$ be defined by 
	\begin{align*}
	 &F_k({x}_k,\bu_k,\bv_k) = \begin{bmatrix}\frac{\partial g_k^1}{\partial {x}_k}^\prime & \frac{\partial g_k^1}{\partial u_k^1}^\prime & \cdots & \frac{\partial g_k^N}{\partial u_k^N}^\prime & \frac{\partial g_k^1}{\partial v_k^1}^\prime& \cdots & \frac{\partial g_k^N}{\partial v_k^N}^\prime  \end{bmatrix}^\prime,~ k \in \mathcal{K}\backslash \{K\}, \\
	 &F_K(x_K,\bv_K) = \begin{bmatrix}\frac{\partial g_K^1}{\partial {x}_k}^\prime & \frac{\partial g_K^1}{\partial v_K^1}^\prime& \cdots & \frac{\partial g_K^N}{\partial v_K^N}^\prime   \end{bmatrix}^\prime,
	\end{align*}
	then the  instantaneous and terminal potential  functions are given by 
	\begin{subequations}
	\begin{align} 
	&P_k(x_k,\bu_k,\bv_k) =c_k+\int_0^1 F_k(\alpha_k(z)) \bigcdot  \frac{\partial \alpha_k(z)}{\partial z}dz,~k\in \mathcal K, \label{eq:Pfk}\\
	&P_K(x_K,\bv_K) =c_K+\int_0^{1} F_K(\alpha_K(z)) \bigcdot  \frac{\partial \alpha_K(z)}{\partial z}dz,
	\label{eq:PfK}
	\end{align}
    \end{subequations}
 where $\alpha_k:[0,1]\rightarrow \mathcal{C}_k$ ($k\in \mathcal K$) is a bijective parametrization 
of a piece-wise smooth curve $\mathcal{C}_k\subset  {\Omega}_k$ such that 
$\alpha_k(0)=  (x_{0k},\bu_{0k},\bv_{0k})$, $\alpha_k(1)=(x_{k},\bu_{k},\bv_{k})$ for $k\in\mathcal K \backslash \{K\}$,  $\alpha_K(0)=  (x_{0K},\bv_{0K})$, $\alpha_K(1)=(x_{K},\bv_{K})$, and $\{c_k,k\in \mathcal K\}$ are constants.
\label{thm:construction}
\end{theorem}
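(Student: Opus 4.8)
The plan is to use Lemma~\ref{lem:potential} to manufacture the potential functions and then invoke Theorem~\ref{thm:verfication}. In outline: (i) show that \eqref{eq:Potential NC}, together with the separable structure and twice continuous differentiability from Assumption~\ref{asm:feasibility}, force each $F_k$ and $F_K$ to be a conservative vector field on the convex domains $\Omega_k$ and $\Omega_K$; (ii) apply Lemma~\ref{lem:potential} to obtain $P_k$ and $P_K$ with $\nabla P_k=F_k$, $\nabla P_K=F_K$, the path-integral formulas \eqref{eq:Pfk}--\eqref{eq:PfK} coming from part (c) of that lemma; (iii) read off the components of $\nabla P_k=F_k$ to check that these $P_k$ satisfy Assumption~\ref{asm:gradient}; (iv) conclude by Theorem~\ref{thm:verfication} that NZDG is an OLPDG with these potential functions.

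\textbf{Conservativeness.} The substantive step is to verify condition \eqref{eq:symcond} of Lemma~\ref{lem:potential} for $F_k$, i.e. that the Jacobian of $F_k$ with respect to the block coordinate $\omega_k=(x_k,u_k^1,\dots,u_k^N,v_k^1,\dots,v_k^N)$ is symmetric, which I would check block by block. The diagonal blocks are the Hessians $\partial^2 g_k^1/\partial x_k\partial x_k$, $\partial^2 g_k^i/\partial u_k^i\partial u_k^i$ and $\partial^2 g_k^i/\partial v_k^i\partial v_k^i$, symmetric by equality of mixed partials (Assumption~\ref{asm:feasibility}.(4)). Among the off-diagonal blocks, the $(u_k^i,u_k^j)$ pair is precisely \eqref{eq:ux} and the $(v_k^i,v_k^j)$ pair is precisely \eqref{eq:vxx}; the $(x_k,u_k^j)$ pair follows by first invoking \eqref{eq: partial x} to replace $\partial g_k^1/\partial x_k$ with $\partial g_k^j/\partial x_k$ and then using equality of mixed partials of $g_k^j$, and the $(x_k,v_k^j)$ pair is handled the same way; and the $(u_k^i,v_k^j)$ pair vanishes on both sides since separability, $g_k^i=gu_k^i(x_k,\bu_k)+gv_k^i(x_k,\bv_k)$, makes $\partial g_k^i/\partial u_k^i$ independent of $\bv_k$ and $\partial g_k^j/\partial v_k^j$ independent of $\bu_k$. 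The identical computation, now with coordinate $(x_K,v_K^1,\dots,v_K^N)$ and \eqref{eq: partial x}, \eqref{eq:vxx} taken at $k=K$, shows $F_K$ is conservative on $\Omega_K$.

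\textbf{Potentials and gradient conditions.} By Assumption~\ref{asm:feasibility}.(1) each $\Omega_k$ is a product of convex sets, hence convex, so Lemma~\ref{lem:potential} applies and yields scalar functions $P_k$, $P_K$ with $\nabla P_k=F_k$ and $\nabla P_K=F_K$; path-independence of the line integral (which is the conservativeness just shown) makes the representations \eqref{eq:Pfk}--\eqref{eq:PfK} well defined, with the constants $c_k$, $c_K$ and the base points free. Extracting the blocks of $\nabla P_k=F_k$ gives $\partial P_k/\partial x_k=\partial g_k^1/\partial x_k$, $\partial P_k/\partial u_k^i=\partial g_k^i/\partial u_k^i$ and $\partial P_k/\partial v_k^i=\partial g_k^i/\partial v_k^i$; by \eqref{eq: partial x} the first equals $\partial g_k^i/\partial x_k$ for every $i$, and by separability the other two equal $\partial gu_k^i/\partial u_k^i$ and $\partial gv_k^i/\partial v_k^i$, so \eqref{eq:gradcond1} holds, and \eqref{eq:gradcond2} follows in the same way from $\nabla P_K=F_K$ and \eqref{eq: partial x} at $k=K$. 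Hence the constructed $\{P_k,~k\in\mathcal K\}$ satisfy Assumption~\ref{asm:gradient}, and since Assumption~\ref{asm:feasibility} is in force, Theorem~\ref{thm:verfication} shows that any optimal solution of the OCP built from these $P_k$ is an open-loop Nash equilibrium of NZDG; by Definition~\ref{def:olpdg}, NZDG is an OLPDG with potential cost functions $\{P_k,~k\in\mathcal K\}$.

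The only real obstacle is the conservativeness step: matching the block pattern of the Jacobian of $F_k$ against the three conditions in \eqref{eq:Potential NC} while keeping the transpose conventions straight, and using separability to annihilate the mixed $u$--$v$ blocks. A minor caveat is that Lemma~\ref{lem:potential} is stated for convex \emph{open} sets, so one should really apply it on an open convex neighbourhood of the trajectories of interest (or assume the action sets $U_k^i$ are open), which does not affect the formulas obtained.
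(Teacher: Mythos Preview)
Your proposal is correct and follows essentially the same route as the paper: both arguments verify that the Jacobian of $F_k$ is symmetric block by block (diagonal Hessians; the $(u^i,u^j)$ and $(v^i,v^j)$ blocks via \eqref{eq:ux} and \eqref{eq:vxx}; the $(x,u^j)$ and $(x,v^j)$ blocks via \eqref{eq: partial x} plus equality of mixed partials; the $(u^i,v^j)$ blocks vanishing by separability), then invoke Lemma~\ref{lem:potential} to produce $P_k$ by the line-integral formula. Your write-up is in fact slightly more complete than the paper's, since you explicitly read off $\nabla P_k=F_k$ to check Assumption~\ref{asm:gradient} and then appeal to Theorem~\ref{thm:verfication} to close the ``NZDG is an OLPDG'' claim, whereas the paper leaves that last link implicit; your caveat about Lemma~\ref{lem:potential} requiring an open convex domain is also a fair technical point that the paper does not address.
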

\begin{proof}  
	We write the vector field $F_k,~k \in \mathcal{K} \backslash K $ as $F_k = \begin{bmatrix}{F_k^x}^\prime & {F_k^u}^\prime &  {F_k^v}^\prime \end{bmatrix}^\prime_{(n+m+s) \times 1} $ such that
	\begin{align*}
	& F_k^x = \left[\frac{\partial g_k^1}{\partial x_k }\right]_{n \times 1},
	~ F_k^u =\begin{bmatrix} \frac{\partial g_k^1}{\partial u_k^1}^\prime & \frac{\partial g_k^2}{\partial u_k^2}^\prime & \hdots & \frac{\partial g_k^N}{\partial u_k^N}^\prime
	\end{bmatrix}^\prime_{m \times 1},\\& F_k^v = \begin{bmatrix} \frac{\partial g_k^1}{\partial v_k^1}^\prime & \frac{\partial g_k^2}{\partial v_k^2}^\prime & \hdots & \frac{\partial g_k^N}{\partial v_k^N}^\prime
	\end{bmatrix}^\prime_{s \times 1}.
	\end{align*}
	Let $\mathbf{w}_k,~k \in \mathcal{K} \backslash \{K\} $ be the $(n+m+s) \times 1$ vector given by $\mathbf{w}_k = \begin{bmatrix}
	x_k^\prime & \bu_k^\prime &  \bv_k^\prime
	\end{bmatrix}^\prime_{(n+m+s) \times 1}$. Therefore, we can write the Jacobian matrix, $ \mathcal{J}_k,~k \in \mathcal{K} \backslash\{k\}$  as 
	\begin{align}
	\label{eq:Jacobiank}
	\mathcal{J}_k = \frac{\partial F_k}{\partial (\mathbf{w}_k)^\prime  } & =  \begin{bmatrix}\frac{\partial F_k^x}{\partial (x_k)^\prime} & \frac{\partial F_k^x}{\partial (\bu_k)^\prime} & \frac{\partial F_k^x}{\partial (\bv_k)^\prime} \\[0.2cm]
	\frac{\partial F_k^u}{\partial (x_k)^\prime} & \frac{\partial F_k^u}{\partial (\bu_k)^\prime} & \frac{\partial F_k^u}{\partial (\bv_k)^\prime} \\[0.2cm] 
	\frac{\partial F_k^v}{\partial (x_k)^\prime} & \frac{\partial F_k^v}{\partial (\bu_k)^\prime} & \frac{\partial F_k^v}{\partial (\bv_k)^\prime}
	\end{bmatrix}_{(n+m+s) \times (n+m+s)},
	\end{align}
	where 
	\begin{align*}
	&\frac{\partial F_k^x}{\partial (x_k)^\prime } = \left[ \frac{\partial^2 g_k^1}{\partial (x_k)^\prime \partial x_k}\right]_{n \times n },~\frac{\partial F_k^u }{\partial (x_k)^\prime } = \begin{bmatrix} \frac{\partial^2 g_k^1}{\partial (x_k)^\prime \partial u_k^1}  \\[0.2cm]  \frac{\partial^2 g_k^2}{\partial (x_k)^\prime \partial u_k^2} \\ \vdots \\ \frac{\partial^2 g_k^N}{\partial (x_k)^\prime \partial u_k^N} \end{bmatrix}_{m \times n }, ~\frac{\partial F_k^v }{\partial (x_k)^\prime } = \begin{bmatrix} \frac{\partial^2 g_k^1}{\partial (x_k)^\prime \partial v_k^1}  \\[0.2cm] \frac{\partial^2 g_k^2}{\partial (x_k)^\prime \partial v_k^2} \\ \vdots \\ \frac{\partial^2 g_k^N}{\partial (x_k)^\prime \partial v_k^N} \end{bmatrix}_{s \times n },  
	\end{align*}
	\begin{align*}
	& \frac{\partial F_k^x}{\partial (\bu_k)^\prime } = \begin{bmatrix}
	\frac{\partial^2 g_k^1}{\partial (u_k^1)^\prime \partial x_k }& \frac{\partial^2 g_k^1}{\partial (u_k^2)^\prime \partial x_k } & \hdots & \frac{\partial^2 g_k^1}{\partial (u_k^N)^\prime \partial x_k }
	\end{bmatrix}_{n \times m},~\frac{\partial F_k^x}{\partial (\bv_k)^\prime  } = \begin{bmatrix}
	\frac{\partial^2 g_k^1}{\partial (v_k^1)^\prime \partial x_k }& \frac{\partial^2 g_k^1}{\partial (v_k^2)^\prime \partial x_k } & \hdots & \frac{\partial^2 g_k^1}{\partial (v_k^N)^\prime \partial x_k }\end{bmatrix}_{n \times s}, 
	\end{align*}
	\begin{align*}
	&  \frac{\partial F_k^u}{\partial (\bu_k)^\prime  } = \begin{bmatrix} \frac{\partial^2 g_k^1}{\partial (u_k^1)^\prime \partial u_k^{1}} & \frac{\partial^2 g_k^1}{\partial (u_k^2)^\prime \partial u_k^{1}}& \hdots & \frac{\partial^2 g_k^1}{\partial (u_k^N)^\prime \partial u_k^{1}} \\[0.2cm]
	\frac{\partial^2 g_k^2}{\partial (u_k^{1})^\prime \partial u_k^2} & \frac{\partial^2 g_k^2}{\partial (u_k^2)^\prime \partial u_k^{2}}& \hdots & \frac{\partial^2 g_k^2}{\partial (u_k^N)^\prime \partial u_k^{2}} \\
	\vdots & \vdots & \ddots & \vdots \\
	\frac{\partial^2 g_k^N}{\partial (u_k^{1})^\prime \partial u_k^N} & \frac{\partial^2 g_k^N}{\partial (u_k^{2})^\prime  \partial u_k^N}& \hdots & \frac{\partial^2 g_k^N}{\partial (u_k^{N})^\prime \partial u_K^N}
	\end{bmatrix}_{m \times m},~\frac{\partial F_k^v}{\partial (\bu_k)^\prime} = \begin{bmatrix} \frac{\partial^2 g_k^1}{\partial (u_k^{1})^\prime \partial v_k^1 }& \frac{\partial^2 g_k^1}{\partial (u_k^2)^\prime \partial v_k^{1}}& \hdots & \frac{\partial^2 g_k^1}{\partial (u_k^N)^\prime \partial v_k^{1}} \\[0.2cm]
	\frac{\partial^2 g_k^2}{\partial (u_k^{1})^\prime \partial v_k^2} & \frac{\partial^2 g_k^2}{\partial (u_k^{2})^\prime \partial v_k^2 }& \hdots & \frac{\partial^2 g_k^2}{\partial (u_k^N)^\prime \partial v_k^{2}} \\
	\vdots & \vdots & \ddots & \vdots \\
	\frac{\partial^2 g_k^N}{\partial (u_k^{1})^\prime \partial v_k^N} & \frac{\partial^2 g_k^N}{\partial (u_k^{2})^\prime \partial v_k^N}& \hdots & \frac{\partial^2 g_k^N}{\partial (u_k^{N})^\prime \partial v_k^N}
	\end{bmatrix}_{s \times m},
	\end{align*}
	\begin{align*} 
	&\frac{\partial F_k^u}{\partial (\bv_k)^\prime } = \begin{bmatrix} \frac{\partial^2 g_k^1}{\partial (v_k^{1})^\prime \partial u_k^1 }& \frac{\partial^2 g_k^1}{\partial (v_k^2)^\prime \partial u_k^{1}}& \hdots & \frac{\partial^2 g_k^1}{\partial (v_k^N)^\prime \partial u_k^{1}} \\[0.2cm]
	\frac{\partial^2 g_k^2}{\partial (v_k^{1})^\prime \partial u_k^2} & \frac{\partial^2 g_k^2}{\partial (v_k^{2})^\prime \partial u_k^2 }& \hdots & \frac{\partial^2 g_k^2}{\partial (v_k^N)^\prime \partial u_k^{2}} \\
	\vdots & \vdots & \ddots & \vdots \\
	\frac{\partial^2 g_k^N}{\partial (v_k^{1})^\prime\partial u_k^N} & \frac{\partial^2 g_k^N}{\partial (v_k^{2})^\prime \partial u_k^N}& \hdots & \frac{\partial^2 g_k^N}{\partial (v_k^{N})^\prime \partial u_k^N}
	\end{bmatrix}_{m \times s},~\frac{\partial F_k^v}{\partial (\bv_k)^\prime } = \begin{bmatrix} \frac{\partial^2 g_k^1}{\partial (v_k^1)^\prime \partial v_k^{1}} & \frac{\partial^2 g_k^1}{\partial (v_k^2)^\prime \partial v_k^{1}}& \hdots & \frac{\partial^2 g_k^1}{\partial (v_k^N)^\prime \partial v_k^{1}} \\
	\frac{\partial^2 g_k^2}{\partial (v_k^{1})^\prime \partial v_k^2} & \frac{\partial^2 g_k^2}{\partial (v_k^2)^\prime \partial v_k^{2}}& \hdots & \frac{\partial^2 g_k^2}{\partial (v_k^N)^\prime \partial v_k^{2}} \\[0.2cm]
	\vdots & \vdots & \ddots & \vdots \\
	\frac{\partial^2 g_k^N}{\partial (v_k^{1})^\prime \partial v_k^N} & \frac{\partial^2 g_k^N}{\partial (v_k^{2})^\prime \partial v_k^N}& \hdots & \frac{\partial^2 g_k^N}{\partial (v_k^N)^\prime \partial v_k^{N}}
	\end{bmatrix}_{s \times s}.
	\end{align*} 
	Firstly, as $\frac{\partial F_k^x}{\partial (x_k)^\prime } = \frac{\partial^2 g_k^1}{\partial (x_k)^\prime \partial x_k}$ we have that $\frac{\partial F_k^x}{\partial (x_k)^\prime }$ is a symmetric matrix. 
	The off-diagonal block matrices in  $\frac{\partial F_u}{\partial (\bu_k)^\prime}$ satisfy \eqref{eq:ux}
	and this implies $\frac{\partial F_u}{\partial (\bu_k)^\prime}$ is symmetric matrix. Similarly, from 
	\eqref{eq:vxx}, we have that  $\frac{\partial F_v}{\partial (\bv_k)^\prime}$ is also a symmetric matrix. 
	Next, from \eqref{eq: partial x}, $ \frac{\partial g_k^i}{\partial x_k} = \frac{\partial g_k^j}{\partial x_k}$, and as cost functions are twice continuously differentiable, we have $\frac{\partial^2 g_k^i}{\partial (u_k^j)^\prime  \partial x_k} = \frac{\partial^2 g_k^j}{\partial (u_k^j)^\prime \partial x_k}$.
	Next, from the symmetry property of mixed partials we get $\frac{\partial^2 g_k^j}{\partial (u_k^j)^\prime  \partial x_k} = \left(\frac{\partial^2 g_k^j}{\partial (x_k)^ \prime \partial u_k^j}\right)^\prime $, and then using \eqref{eq: partial x} we have
	$ \frac{\partial^2 g_k^i}{\partial (u_k^j)^\prime \partial x_k}  = \left( \frac{\partial^2 g_k^j}{\partial (x_k)^\prime  \partial u_k^j}\right)^\prime$, which implies $\frac{\partial F_k^x}{\partial (\bu_k)^\prime} = \left(\frac{\partial F_k^u}{\partial (x_k)^\prime} \right) ^\prime $. Again, using similar arguments we can show that 
	$\frac{\partial F_k^x}{\partial (\bv_k)^\prime } = \left(\frac{\partial F_k^v}{\partial (x_k)^\prime } \right) ^\prime$. From the separable structure of the cost functions we have
	$\frac{\partial^2 g_k^i}{\partial (v_k^j)^\prime  \partial u_k^i} =\left( \frac{\partial^2 g_k^j}{\partial (u_k^i)^\prime  \partial v_k^j}\right)^\prime = \mathbf{0}$, which implies $\frac{\partial F_k^u}{\partial (\bv_k)^\prime } = \left(\frac{\partial F_k^v}{\partial (\bu_k)^\prime } \right) ^\prime = \mathbf{0}$. Clearly, from these observations we see that Jacobian matrix \eqref{eq:Jacobiank} is a symmetric matrix, and as a result $F_k$ is a conservative vector field for all $k\in \mathcal K \backslash \{K\}$.
	At the terminal instant we write $F_K = \begin{bmatrix}F_K^x & F_K^v \end{bmatrix}_{(n+s) \times 1}$ and define $\mathbf{w}_K = \begin{bmatrix} 	x_K^\prime &  \bv_K^\prime \end{bmatrix}^\prime_{(n+s) \times 1}$. From \eqref{eq:Potential NC} and using the same reasoning as above we can show that the Jacobian $ \mathcal{J}_K =  \frac{\partial F_K}{\partial (\mathbf{w}_K)^\prime  } =  \begin{bmatrix}\frac{\partial F_K^x}{\partial (x_K)^\prime} & \frac{\partial F_K^x}{\partial (\bv_K)^\prime } \\[0.2cm]
	\frac{\partial F_K^v}{\partial (x_K)^\prime}  & \frac{\partial F_K^v}{\partial (\bv_K)^\prime }
	\end{bmatrix}_{(n+s) \times (n+s)}$  is a symmetric matrix. This implies, that $F_K$ is a conservative vector field.  
	
	 As  $\alpha_k:[0,1]\rightarrow \mathcal C_k \subset \Omega_k$ is bijective parametrization of a  piece-wise path connecting a fixed point $(x_{0k},\bu_{0k},\bv_{0k})\in \Omega_k$ to an arbitrary point $(x_k,\bu_k,\bv_k)\in \Omega_k$. 
	Following Lemma \ref{lem:potential}, the instantaneous potential function  satisfies
	\begin{align*}
		P_k(x_k,\bu_k,\bv_k)= c_k+\int_{0}^{1}F_k(\alpha_k(z)) \bigcdot \frac{\partial{\alpha_k(z)}}{\partial z}dz,~k \in \mathcal{K}\backslash \{K\}, 
		\end{align*} 
	where $c_{k}=P_k(x_{0k},\bu_{0k},\bv_{0k})$ is value of the potential function evaluated at  $(x_{0k},\bu_{0k},\bv_{0k})$.  Similarly, the terminal potential function is given by
	\begin{align}
	P_K(x_K,\bv_K) = c_{K}+\int_{0}^{1}F_K(\alpha_K(z)) \bigcdot \frac{\partial{\alpha_K(z)}}{\partial
		 z}dz. \notag 
	\end{align}
where $c_{K} = P_K(x_{0K},\bv_{0K})$.
\end{proof}
 
\begin{remark}
From \eqref{eq:Pfk} and \eqref{eq:PfK}, we note that the instantaneous potential function and terminal potential function at a given point are not unique, but are unique up to a constant, and depend upon the choice of the initial fixed points $\{\alpha_k(0),~k\in \mathcal K\}$. This implies that we obtain a family of potential functions, and as a result, several optimal control problems associated with OLPDG. However, as the objective functions of these problems differ by a constant, they have the same optimal solution.
\end{remark} 
%
\section{Open-loop linear quadratic potential difference game}   \label{sec:LQDG}
In this section, we specialize the results obtained from the previous section to a linear quadratic setting and provide a numerical method for computing the open-loop Nash equilibrium associated with OLPDG. Toward this end, we introduce the following $N$-player non-zero sum finite horizon linear quadratic difference game as follows. Each Player $i\in \mathcal N$ solves
\begin{subequations} 
\begin{align}
\text{NZDG1}:~& \min_{\tilde{u}^i,\tilde{v}^i}     J^i(x_0,(\tilde{u}^i,\tilde{u}^{-i}),(\tilde{v}^{i},\tilde{v}^{-i})),\\
&\text{subject to}\notag \\
& x_{k+1}=A_kx_k +\sum_{i\in \mathcal N} B_k^i u_k^i,k\in \mathcal K\backslash \{K\},~ x_0 \text{ (given)}, \label{eq:LQStatedynamics}\\
& M_kx_k+N_k \bv_k +r_k \geq 0,~\bv_k \geq 0,~ k\in \mathcal K, \label{eq:LQconstraints}
\end{align}
where
\begin{align}
J^i(x_0,(\tilde{u}^i,\tilde{u}^{-i}),(\tilde{v}^{i},\tilde{v}^{-i}))& = \frac{1}{2}x_K^{\prime}Q_K^ix_K+p_K^{i^\prime}x_K + \sum_{k=0}^{K-1}\left(\frac{1}{2}x_k^{\prime}Q^i_kx_k+{p_k^i}^\prime x_k+\frac{1}{2}\mathbf{u}^\prime_kR^i_k\mathbf{u}_k\right)\nonumber\\&+\sum_{k=0}^{K}\left(\frac{1}{2}\mathbf{v}_k^{\prime}D^i_k\mathbf{v}_k+{d^{i}_k}^\prime\mathbf{v}_k+x_k^{\prime}L^i_k\mathbf{v}_k \right),
\label{eq:LQobjective}
\end{align}
\end{subequations} 
 where the matrices $Q^i_k\in \mathbb{R}^{n \times n}$,~$i\in \mathcal N$, ~$k\in \mathcal K$ are symmetric, $R_k^i\in \mathbb R^{m_i\times m_i},~i\in \mathcal N,~k\in \mathcal K\backslash \{K\}$ are symmetric and positive definite, $D_k^i\in \mathbb R^{s\times s},~i\in \mathcal N, k\in \mathcal K$ are symmetric,  $p_k^i\in \mathbb R^n$, $i\in \mathcal N$, $k\in \mathcal K \backslash \{K\}$, and $d_k^i\in \mathbb R^s$, $L_k^i\in \mathbb R^{n\times s}$, $i\in \mathcal N,~k\in \mathcal K$.
 Associated with NZDG1 we introduce the following optimal control problem 
 \begin{subequations} 
\begin{align}
    \mathrm{OCP1}:\quad \quad& \min_{\tilde{\mathbf{u}},\tilde{\mathbf{v}}} J(x_0,\tilde{\mathbf{u}},\tilde{\mathbf{v}}),\label{eq:LQOCP1}\\
   &\text{subject to  \eqref{eq:LQStatedynamics} and  \eqref{eq:LQconstraints}}\notag  
   \end{align} 
   where
\begin{align}  J(x_0,\tilde{\mathbf{u}},\tilde{\mathbf{v}})&=\frac{1}{2} x_k ^{\prime}Q_K x_k +p_K^{\prime} x_k  + \sum_{k=0}^{K-1}\left(\frac{1}{2} x_k ^{\prime}Q_k x_k +p^{\prime}_k x_k +\frac{1}{2}\mathbf{u}_k^{\prime}R_k \mathbf{u}_k\right)\nonumber\\& +\sum_{k=0}^{K}\left(\frac{1}{2}\mathbf{v}_k^{\prime}D_k\mathbf{v}_k+d^{\prime}_k\mathbf{v}_k+ x_k ^{\prime}L_k\mathbf{v}_k \right) 
\label{eq:OCP1obj}
\end{align}
\end{subequations} 
with $Q_k\in \mathbb{R}^{n \times n}$, $d_k\in \mathbb R^s$, $p_k\in \mathbb R^n$, $D_k\in \mathbb R^{s\times s}$, $L_k\in \mathbb R^{n\times s}$,
$i\in \mathcal N$, $k\in \mathcal K$, and $R_k\in \mathbb R^{m \times m}$,~$i\in \mathcal N$, $k\in \mathcal K \backslash \{K\}$. 

\begin{assumption}
The admissible action sets $\{U_k^i,~k\in \mathcal K\backslash \{K\},~i\in \mathcal N\}$, are such that the sets of state vectors  $\{X_k,~k\in \mathcal K\}$, obtained from \eqref{eq:LQStatedynamics}, are convex, and feasible action sets $\{V^i_k( x_k ,v_k^{-i})=\{v_k^i\in \mathbb R^{s_i}~|~ M_kx_k+N_k \bv_k+r_k\geq 0,~ \mathbf{v}_k\geq 0\},~\forall x_k\in X_k\}$ are non-empty, convex and bounded for all   $k\in \mathcal K$, $i\in \mathcal N$. 
\label{ass:LQfeasibility}
\end{assumption} 
 
 In the next theorem  we provide conditions under which NZDG1 is an open-loop dynamic potential game, and using Theorem \ref{thm:construction} we construct the potential functions associated  optimal control problem (OCP1).
\begin{theorem}\label{th:LQ Restrictions} Let Assumption  \ref{ass:LQfeasibility} holds true. 	Let the parameters associated with NZDG1 satisfy the following conditions
	\begin{subequations}
			\begin{align}
		&[R_{k}^i]_{ij}=[R_k^j]_{ij}, ~i,j\in \mathcal N,~i\neq j, ~k\in \mathcal K\backslash \{K\}, \label{eq:Ri Rj}\\
		&Q_k^i=Q_k^j,~p_k^i=p_k^j,~L_k^i=L_k^j,[D_k^i]_{ij}=[D_k^j]_{ij}, ~i,j\in \mathcal N,~i\neq j, ~k\in \mathcal K, \label{eq:Di Dj}
		\end{align}
		\label{eq:potcond1}
	\end{subequations}
	then NZDG is a OLDPG. Further, the OCP associated with the related OLDPG is described by
	\begin{subequations}
			\begin{align} \label{eq:PF payoff1}
		&Q_k=Q_k^i,~[D_k]_{i\bullet}=[D_k^i]_{i\bullet},~[d_k]_i=[d_k^i]_i,~L_k=L_k^i,~p_k=p_k^i,
		~i\in \mathcal N,~k\in \mathcal K,\\
		&[R_k]_{i\bullet}=[R_k^i]_{i\bullet},~i\in \mathcal N,~k\in \mathcal K\backslash \{K\}. \label{eq:PF payoff2}
		\end{align}
		\label{eq:ocpcond1}
	\end{subequations}
\end{theorem}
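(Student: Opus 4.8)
The plan is to verify that NZDG1 meets the hypotheses of Theorem~\ref{thm:construction} and then to read off the potential cost functions of OCP1 from the line-integral formula \eqref{eq:Pfk}--\eqref{eq:PfK}. First note that Assumption~\ref{ass:LQfeasibility} is exactly Assumption~\ref{asm:feasibility}.(1) specialized to the linear dynamics \eqref{eq:LQStatedynamics} and affine constraints \eqref{eq:LQconstraints}, while Assumption~\ref{asm:feasibility}.(3)--(4) hold automatically: the integrand of \eqref{eq:LQobjective} is a polynomial (hence twice continuously differentiable) and contains no cross term between $\bu_k$ and $\bv_k$, so the separable form $g_k^i=gu_k^i(x_k,\bu_k)+gv_k^i(x_k,\bv_k)$ is available, with any allocation of the $x_k$-quadratic term acceptable since \eqref{eq:Potential NC} refers to $g_k^i$ directly. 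It then remains to check \eqref{eq:Potential NC} and to compute the potential.

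For $k\in\mathcal K\backslash\{K\}$ the player cost in \eqref{eq:LQobjective} gives
\begin{align*}
\frac{\partial g_k^i}{\partial x_k}&=Q_k^ix_k+p_k^i+L_k^i\bv_k,\qquad \frac{\partial g_k^i}{\partial u_k^i}=[R_k^i]_{i\bullet}\bu_k,\\
\frac{\partial g_k^i}{\partial v_k^i}&=[D_k^i]_{i\bullet}\bv_k+[d_k^i]_i+[L_k^i]_{\bullet i}^\prime x_k,
\end{align*}
and the terminal case is identical with the $\bu_K$-term absent. Hence $\partial^2 g_k^i/\partial(u_k^j)^\prime\partial u_k^i=[R_k^i]_{ij}$ and $\partial^2 g_k^i/\partial(v_k^j)^\prime\partial v_k^i=[D_k^i]_{ij}$. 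Using the symmetry of $R_k^j$ and $D_k^j$, condition \eqref{eq:ux} becomes $[R_k^i]_{ij}=\big([R_k^j]_{ji}\big)^\prime=[R_k^j]_{ij}$, i.e.\ \eqref{eq:Ri Rj}; condition \eqref{eq: partial x}, required for all $x_k$ and $\bv_k$, forces $Q_k^i=Q_k^j$, $p_k^i=p_k^j$ and $L_k^i=L_k^j$; and \eqref{eq:vxx} becomes $[D_k^i]_{ij}=[D_k^j]_{ij}$. Together with the $i=j$ cases (automatic from the symmetry of $R_k^i$, $D_k^i$), these are exactly \eqref{eq:potcond1}, so Theorem~\ref{thm:construction} applies and NZDG1 is an open-loop potential difference game.

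To identify OCP1, I would specialize the conservative vector field $F_k$ of Theorem~\ref{thm:construction}. Under \eqref{eq:potcond1} the quantities $Q_k:=Q_k^i$, $p_k:=p_k^i$, $L_k:=L_k^i$ are well defined, and defining $R_k$, $D_k$, $d_k$ block-row-wise by $[R_k]_{i\bullet}:=[R_k^i]_{i\bullet}$, $[D_k]_{i\bullet}:=[D_k^i]_{i\bullet}$, $[d_k]_i:=[d_k^i]_i$, the conditions \eqref{eq:Ri Rj}--\eqref{eq:Di Dj} make $R_k$ and $D_k$ symmetric (consistently with the symmetry of the Jacobian $\mathcal J_k$ in \eqref{eq:Jacobiank} established in the proof of Theorem~\ref{thm:construction}). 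One then checks that $F_k=\nabla q_k$, where
\[
q_k(x_k,\bu_k,\bv_k):=\tfrac12 x_k^\prime Q_kx_k+p_k^\prime x_k+\tfrac12\bu_k^\prime R_k\bu_k+\tfrac12\bv_k^\prime D_k\bv_k+d_k^\prime\bv_k+x_k^\prime L_k\bv_k,
\]
and similarly $F_K=\nabla q_K$ with the $\bu_K$-term dropped. By Lemma~\ref{lem:potential}(c) (equivalently, taking the straight-line path $\alpha_k(z)=z\mathbf{w}_k$ from the origin and integrating), the line integral \eqref{eq:Pfk}--\eqref{eq:PfK} then yields $P_k=q_k+c_k$ and $P_K=q_K+c_K$ for constants $c_k,c_K$. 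These are precisely the instantaneous and terminal costs of OCP1 in \eqref{eq:OCP1obj}, which establishes \eqref{eq:ocpcond1}.

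The argument is essentially bookkeeping with block-partitioned derivatives; the points needing care are (i) converting the ``transpose-of-a-cross-partial'' equalities in \eqref{eq:Potential NC} into the plain equalities \eqref{eq:potcond1} via the symmetry of $R_k^i$ and $D_k^i$, and (ii) verifying that the constructed $R_k,D_k$ are symmetric so that $F_k$ is indeed $\nabla q_k$ --- in particular that the bilinear term $x_k^\prime L_k\bv_k$ reappears in $q_k$ with coefficient one, since the quadratic part of the potential equals $\tfrac12\mathbf{w}_k^\prime\mathcal J_k\mathbf{w}_k$ and the two off-diagonal $L_k$-blocks of $\mathcal J_k$ contribute $\tfrac12\big(x_k^\prime L_k\bv_k+\bv_k^\prime L_k^\prime x_k\big)=x_k^\prime L_k\bv_k$. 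I do not expect a genuine obstacle beyond this.
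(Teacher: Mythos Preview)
Your proposal is correct and follows essentially the same route as the paper: both verify the hypotheses \eqref{eq:Potential NC} of Theorem~\ref{thm:construction} by computing the relevant partial derivatives of the quadratic costs and invoking the symmetry of $R_k^i,D_k^i$, and both recover the potential by integrating the conservative field $F_k$ along the straight line $\alpha_k(z)=z\mathbf{w}_k$ from the origin. The only cosmetic difference is that you first identify the candidate $q_k$ and check $F_k=\nabla q_k$, whereas the paper carries out the line integral explicitly and reads off the same expression; these are equivalent and neither constitutes a genuinely different approach.
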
 
\begin{proof} We first consider the conditions in \eqref{eq:Di Dj} and verify the condition \eqref{eq: partial x},
	\begin{align*}
	\frac{\partial g_k^i}{\partial x_k} = Q^i_kx_k+p^i_k+L^i_k\mathbf{v}_k =  Q^j_kx_k+p^j_k+L^j_k\mathbf{v}_k = \frac{\partial g_k^j}{\partial x_k},~k\in \mathcal K.
	\end{align*}
	Since 
	$Q_k^i=Q_k^j$, $p_k^i=p_k^j$ and $L_k^i=L_k^j, \forall i,j \in \mathcal{N}$, the condition \eqref{eq: partial x} holds true. Next, by using \eqref{eq:Ri Rj} and \eqref{eq:Di Dj} and the symmetric structure of $R_k^i$ and $D_k^i$,  we verify the conditions \eqref{eq:ux} and \eqref{eq:vxx} as follows 
	\begin{align*}
	&\frac{\partial^2 g_k^i}{\partial (u_k^j)^\prime \partial u_k^i}  =  [R^i_k]_{ij} = [R^j_k]_{ij} =\left([R^j_k]_{ji}\right)^\prime = \left(\frac{\partial^2 g_k^j}{\partial (u_k^i)^\prime \partial u_k^j}\right)^\prime,~k \in \mathcal{K}\backslash\{K\}, \\
	&\frac{\partial^2 g_k^i}{\partial (v_k^j)^\prime \partial v_k^i}  =  [D^i_k]_{ij} = [D^j_k]_{ij}  =\left([D^j_k]_{ji}\right)^\prime  =\left(\frac{\partial^2 g_k^j}{\partial (v_k^i)^\prime \partial v_k^j}\right)^\prime,~k \in \mathcal{K}.
	\end{align*}
	So, NZDG1 is a OLPDG. Next, we proceed to construct the potential functions associated with OLPDG.
	The gradient vector field $F_k=\begin{bmatrix} {F_k^x}^\prime& {F_k^u}^\prime& {F_k^v}^\prime \end{bmatrix}^\prime$ is calculated as
	\begin{align*}
	&F_k^x= Q_k^1x_k+p_k^1+L_k^1\mathbf{v}_k,~	F_k^u= \begin{bmatrix}
[R^1_k]_{1\bullet}\\
[R^2_k]_{2\bullet}\\
	\vdots\\
[R^N_k]_{N\bullet}
	\end{bmatrix} \bu_k,~F_k^v= \begin{bmatrix}
[D^1_k]_{1\bullet}\\
[D^2_k]_{2\bullet}\\
	\vdots\\
[D^N_k]_{N\bullet}
	\end{bmatrix} \bv_k + \begin{bmatrix}[d_k^1]_1\\ [d_k^2]_2\\ \vdots \\ [d_k^N]_N \end{bmatrix} 
	+\begin{bmatrix}
	[L^1_k]^\prime_{\bullet 1}\\
	[L^2_k]_{\bullet 2}\\
	\vdots\\
	[L^N_k]^\prime_{\bullet N}
	\end{bmatrix}x_k.
\end{align*}
Since, $F_k$ is conservative, the instantaneous potential function \eqref{eq:Pfk} evaluated as a line integral, along an arbitrary piecewise path in $\Omega_k$, depends only on the initial and final points. We consider a straight line connecting the origin in $\Omega_k$  and an arbitrary point $(x_k,\bu_k,\bv_k)\in \Omega_k$, and the associated bijective 
parametrization of this line is given by $\alpha_k(z)=z\begin{bmatrix}x_k^\prime & \bu_k^\prime & \bv_k^\prime \end{bmatrix}^\prime$ with $z\in [0, 1]$. The instantaneous potential function is computed as 
	\begin{align}
	P_k(x_k,\bu_k,\bv_k)&= c_k+\int_0^1 F_k (\alpha_k(z)) \bigcdot \frac{d \alpha_k(z)}{dz}~dz \notag \\
	&=c_k+\int_0^1  \left(x_k^\prime ~F_k^x(\alpha_k(z)) + \bu_k^\prime~F_k^u(\alpha_k(z))+\bv_k^\prime~ F_k^v(\alpha_k(z))\right) dz.  
	\label{eq:Pkeq}
	\end{align}
	We define matrices $R_k$, $D_k$, $Q_k$, $L_k$, $p_k$ and $d_k$  such that $[R_k]_{i\bullet}=[R_k^i]_{i\bullet}$,  
	$[D_k]_{i\bullet}=[D_k^i]_{i\bullet}$, $Q_k=Q_k^i$, $[L_k]_{\bullet i}=[L_k^i]_{\bullet i}$, $p_k=p_k^i$, and $[d_k]_i=[d_k^i]_i$ for all $i\in \mathcal N$, $k\in \mathcal K\backslash \{K\}$. Then, from \eqref{eq:potcond1} it follows that $R_k$ and $D_k$ are symmetric matrices and $Q_k=Q_k^i=Q_k^j$, $L_k=L_k^i=L_k^j$, $p_k=p_k^i=p_k^j$ for all $i,j\in \mathcal N$, $k\in \mathcal K \backslash \{K\}$. Using this \eqref{eq:Pkeq} can be written as
	\begin{multline}
P_k(x_k,\bu_k,\bv_k)=c_k+ \int_0^1 \left(x_k^\prime \left[Q_k (zx_k)+p_k+L_k (z\bv_k)\right]+\bu_k^\prime R_k (z\bu_k)+
\bv_k^\prime \left[ D_k (z\bv_k) + d_k + L_k (zx_k)\right] \right) dz  \\ 
=c_k+ \frac{1}{2}x_k^\prime Q_k x_k+p_k^\prime x_k +\frac{1}{2} \bu_k^\prime R_k \bu_k +\frac{1}{2} \bv_k^\prime D_k \bv_k + d_k^\prime \bv_k + x_k^\prime L_k \bv_k.
\label{eq:potfn1}
	\end{multline}
Similarly, the terminal vector field $F_K=\begin{bmatrix}{F_K^x}^\prime & {F_K^v}^\prime \end{bmatrix}^\prime$ is calculated as 
\begin{align*}
F_K^x= Q_K^1x_K+p_K^1+L_K^1\mathbf{v}_K,~
F_K^v= \begin{bmatrix}
[D^1_K]_{1\bullet}\\
[D^2_K]_{2\bullet}\\
\vdots\\
[D^N_K]_{N\bullet}
\end{bmatrix} \bv_K + \begin{bmatrix}[d_K^1]_1\\ [d_K^2]_2\\ \vdots \\ [d_K^N]_N \end{bmatrix} 
+\begin{bmatrix}
[L^1_K]^\prime_{\bullet 1}\\
[L^2_K]_{\bullet 2}\\
\vdots\\
[L^N_K]^\prime_{\bullet N}
\end{bmatrix}x_K.
\end{align*}
	We define matrices  $D_K$, $Q_K$, $L_K$, $p_K$ and $d_K$  such that 
$[D_K]_{i\bullet}=[D_K^i]_{i\bullet}$, $Q_K=Q_K^i$, $[L_K]_{\bullet i}=[L_K^i]_{\bullet i}$, $p_K=p_K^i$, and $[d_K]_i=[d_K^i]_i$ for all $i\in \mathcal N$. Then, from \eqref{eq:potcond1} it follows that  $D_K$ is a symmetric matrix and $Q_K=Q_K^i=Q_K^j$, $L_K=L_K^i=L_K^j$, $p_K=p_K^i=p_K^j$ for all $i,j\in \mathcal N$. Using this, and using the same procedure as before, the terminal potential function \eqref{eq:PfK} is calculated as 
\begin{align}
P_K(x_K,\bv_K)&=c_K+\int_0^1 \left(x_K^\prime F_K^x(\alpha_K(z))\bv_K^\prime F_K^v(\alpha_K(z)) \right) dz \notag\\
&=c_K+\frac{1}{2}x_K ^\prime Q_K x_K+ p_K^\prime x_K  +\frac{1}{2}\bv_K^\prime D_K \bv_K + x_K^\prime L_K \bv_K+ d_K^\prime \bv_K.
\label{eq:potfn2}
\end{align}
The instantaneous and terminal potential function given by \eqref{eq:potfn1} and \eqref{eq:potfn2}, respectively, constitute the objective function  \eqref{eq:OCP1obj} associated with the OCP1. The parameters associcated with this objective function satisfy the conditions \eqref{eq:ocpcond1}.
\end{proof}
\subsection{Computation of open-loop Nash equilibrium associated with OLPDG}
\label{sec:solvability}
In this section, under a few assumptions on the parameters we transform the necessary conditions associated with OCP1 to a large-scale linear complementarity problem, there by providing a way to compute the open-loop Nash equilibrium. 
Let $(\mathbf{\tilde{u}^{*},\tilde{v}^{*}})$ be the optimal solution of the OCP1 and $\{x_k ^*,~k\in \mathcal K\}$ be the state trajectory generated by $\mathbf{\tilde{u}^{*}}$. The necessary conditions of optimality of
 are then given by
\begin{subequations}
\begin{align}
\text{for }& k\in \mathcal K\backslash \{K\} \notag \\
\label{eq:LQOCP Cond1}  &  R_k\mathbf{u}_k^*+ \mathbf{B}_k^\prime\lambda^*_{k+1} = 0,~ \mathbf{B}_k=\begin{bmatrix}B_k^1 &   B_k^1 & \cdots &  B_k^N \end{bmatrix}, \\
\label{eq:LQOCP Cond2}  & \mathbf{x}_{k+1}^{*} = A_k x_k ^*+\sum_{l \in \mathcal N}B_k^lu_k^{l^*}, \quad x_{0}~ \text{is given},\\
\label{eq:LQOCP Cond3}  & \lambda_k^* = Q_k x_k ^{*}+p_k+L_k\mathbf{v}_k^{*}+ A_k^\prime \lambda_{k+1}^{*}-M_k^{\prime}\mu_k^{*},\\
\label{eq:LQOCP Cond4}  & \lambda_K^{*} = Q_K x_k ^*+p_K+L_K\mathbf{v}_K^*-M_K^{\prime}\mu_K^*,\\
\text{for }& k\in \mathcal K \notag \\
\label{eq:LQOCP Cond5} & 0 \leq \left( D_k\mathbf{v}_k^{*}+d_k+L_k^\prime x_k ^* - N_k^\prime\mu_k^*\right) \perp v_k^* \geq 0, \\
\label{eq:LQOCP Cond7}  & 0 \leq \left( M_k x_k ^*+ N_k\mathbf{v}_k^*+r\right) \perp \mu_k^* \geq 0.
 \end{align}
 \end{subequations} 
The above set of necessary conditions lead to a weakly coupled system of a parametric two-point boundary value problem \eqref{eq:LQOCP Cond1}-\eqref{eq:LQOCP Cond4} and a parametric linear complimentarity problem 
\eqref{eq:LQOCP Cond5}-\eqref{eq:LQOCP Cond7}. We have the following assumption.
\begin{assumption}\label{asm:Affine Co-state}
The co-state variable $\lambda_k$ is assumed to be affine in the state variable $ x_k $ for $k \in \mathcal K$ i.e., $\lambda_k^* = H_k x_k ^*+\beta_k$  where $H_k \in \mathbb{R}^{n \times n}$ and $\beta_k \in \mathbb{R}^{n \times 1}$.
\end{assumption} 
Using Assumption \ref{asm:Affine Co-state} it can be shown that the two-point boundary value problem \eqref{eq:LQOCP Cond1}-\eqref{eq:LQOCP Cond4} can be solved if the following backward equations for $k \in \mathcal K\backslash \{K\}$ and $i \in \mathcal N$ has a solution
\begin{subequations} 
\begin{align}
    &\label{eq: Backward1} \Gamma_{k+1}^{k} = \mathbf{I}+S_k H_{k+1}, \\
    & \label{eq: Backward2} H_{k}=Q_k+A_k^\prime H_{k+1}(\Gamma^{k}_{k+1})^{\mbox{-}1}A_k,\\
    &\label{eq: Backward3} \beta_k =p_k-M^{\prime}\mu_k^*+L_k\mathbf{v}_k^*+A_k^{\prime}\beta_{k+1}-A_k^{\prime}H_{k+1}(\Gamma_{k
    +1}^k)^{\mbox{-}1}S_k\beta_{k+1},
\end{align} 
\label{eq:RDE}
\end{subequations}
where $S_k = \mathbf{B}_kR_k^{\mbox{-}1}\mathbf{B}_k^{\prime}$, $H_K = Q_K$ and $\beta_K = p_K+L_K\mathbf{v}_K^*-M_K^{\prime}\mu_K^*$. Assuming $\Gamma_{k+1}^{k}$ to be invertible for $k = K-1,\dots,1$, we obtain $H_k$ and $\beta_k$ for $k = K-1,\dots,0$, and the state vector $ x_k ^*$ and the joint control vector  $\mathbf{u}_k^*$ are given by
\begin{subequations}
\begin{align}
\label{eq:State DE} \mathbf{x}_{k+1}^* & = \left(\Gamma^k_{k+1} \right)^{\mbox{-}1}\left(A_k x_k ^*-S_k\beta_{k+1} \right), ~k \in \mathcal K\backslash \{K\}, \\
\label{eq:decision Eq}   \mathbf{u}_k^* & = -R_k^{\mbox{-}1}\mathbf{B}_k^{\prime}\left(H_{k+1}\mathbf{x}_{k+1}^{*}+\beta_{k+1} \right).
\end{align} 
\label{eq:statecontrol} 
\end{subequations} 
\begin{remark} 
Suppose that the set of backward equations \eqref{eq: Backward1}-\eqref{eq: Backward3} admits a solution, i.e., the matrix $\Gamma^k_{k+1}$ is invertible for all $k \in \mathcal K \backslash \{K\}$, then the two point boundary value problem in \eqref{eq:LQOCP Cond2}-\eqref{eq:LQOCP Cond4} has a unique solution. To show this, let $\bar{\lambda_k} = \lambda_k^* - (H_k x_k ^*+\beta_k)$ be another solution for the two point boundary value problem \eqref{eq:LQOCP Cond2}-\eqref{eq:LQOCP Cond4}. Substituting $\lambda_k^* = \bar{\lambda}_k+ H_k x_k ^*+\beta_k$ in \eqref{eq:LQOCP Cond2} and \eqref{eq:LQOCP Cond3}, we get a decoupled system of equations as  
\begin{subequations}
\begin{align}
\mathbf{x}_{k+1}& = (\Gamma^k_{k+1})^{\mbox{-}1}\left(A_k\mathbf{x}_{k}^*-S_k\bar{\lambda}_{k+1}-S_k\beta_{k+1} \right),\\
\bar{\lambda}_{k} & = A_k^{\prime}\bar{\lambda}_{k+1}-A_k^{\prime}H_{k+1}(\Gamma^k_{k+1})^{\mbox{-}1}S_k\bar{\lambda}_{k+1}.
\end{align}	
\end{subequations}
  From the terminal condition, $\bar{\lambda}_{K}=0$ which results in $\bar{\lambda}_{k}=0 ~\forall k \in \mathcal K$. This proves that the solution for the two point boundary value problem described by \eqref{eq:LQOCP Cond2}-\eqref{eq:LQOCP Cond4} is unique. 
\label{rem:unique}
\end{remark} 

 In view of Remark \ref{rem:unique} we have the following standing assumption in the remaining part of the paper.
\begin{assumption}\label{asm:Gamma Inv}
The set of matrices $\{\Gamma^k_{k+1},~ k \in \mathcal K\backslash \{K\}\}$ are invertible.
\end{assumption}\noindent
Next, we note that the backward equations \eqref{eq: Backward1} and \eqref{eq: Backward2} are coupled but evolve independently of \eqref{eq: Backward3}. Taking $G_{k+1} = A_k^{\prime}-A_k^{\prime}H_{k+1}(\Gamma_{k
	+1})^{\mbox{-}1}S_k$,  \eqref{eq: Backward3} can be represented in the vector form as follows 
    \begin{align} \label{eq:Beta DE}
        \beta_k = \sum_{\tau = k}^{K}\psi(k,\tau)\left( p_\tau+\begin{bmatrix}L_\tau &-M_\tau^{\prime}\end{bmatrix} \begin{bmatrix}\mathbf{v}_\tau^{*^\prime} & \mu_\tau^{*^\prime}\end{bmatrix}^{\prime}\right),~\forall k \in \mathcal K,
    \end{align} where the transition matrix associated with \eqref{eq: Backward3} is  $\psi(k,\tau)=G_{k+1}\cdots G_{\tau-1}G_{\tau}$, if $\tau > k$ and $\psi(k,k) = \mathbf{I}$. Thus,  \eqref{eq:Beta DE} represents a parametric linear backward difference equation parametrized by $\{\mathbf{v}_k^*,\mu_k^*,~k \in \mathcal K, ~i \in \mathcal N\}$. Now, we analyse the forward difference equation for the state trajectory \eqref{eq:State DE}. Suppose
    \begin{align*}
        \mathbf{x}_{k+1} = \bar{A}_{k} x_k ^*+\bar{B}_{k}\beta_{k+1}, \quad x_0~ \text{is given},
    \end{align*} where $\bar{A}_k = (\Gamma^k_{k+1})^{\mbox{-}1}A_k$ and $\bar{B}_k = -(\Gamma^k_{k+1})^{\mbox{-}1}S_k$ $\forall k \in \mathcal K\backslash\{K\}$. Denoting the transition matrix as $\phi(\rho,k) = \bar{A}_{k-1}\bar{A}_{k-2}\cdots \bar{A}_{\rho}$ for $\rho < k$ and $\phi(k,k) = \mathbf{I}$, and from \eqref{eq:Beta DE}  we have  
   for $k \in \mathcal K\backslash\{0\}$, 
\begin{multline}
x_k ^* = \phi(0,k)x_0+\sum_{\tau= 1}^{K}\left(\left(\sum_{\rho=1}^{\operatorname{min}(k,\tau)}\phi(\rho,k)\bar{B}_{\rho-1}\psi(\rho,\tau) \right)\left( p_\tau+\begin{bmatrix}L_\tau &-M_\tau^{\prime}\end{bmatrix} \begin{bmatrix}\mathbf{v}_\tau^{*^\prime} & \mu_\tau^{*^\prime}\end{bmatrix}^{\prime}\right)\right).
\label{eq:State ForwardEq2}
\end{multline} 
Further, we combine the variables in \eqref{eq:State ForwardEq2} as $p_{\mathbf{K}} = \begin{bmatrix}p_{1}^{\prime}& \hdots & p_{K}^{\prime} \end{bmatrix}^{\prime} $, $~\mathbf{x}_{\mathbf{K}}^* = \begin{bmatrix}\mathbf{x}_{1}^{*^\prime}& \hdots ~ \mathbf{x}_{K}^{*^\prime} \end{bmatrix}^{\prime} $ and  $y_{\mathbf{K}}^{*} =\left[\mathbf{v}_{1}^{*^\prime}~\mu_1^{*^\prime}\right.$ \\ $\left.\hdots ~ \mathbf{v}_{K}^{*^\prime}~\mu_K^{*^\prime} \right]^{\prime}$. As a result,   \eqref{eq:State ForwardEq2} can be written as:
\begin{align}\label{eq:State ForwardEq3}
     {x}_{\mathbf{K}}^* =\mathbf{\Phi}_{0}x_0+\mathbf{\Phi}_1p_{\mathbf{K}}+\mathbf{\Phi}_2y_{\mathbf{K}}^*,
\end{align} \noindent
where $[\mathbf{\Phi}_0]_k = \phi(0,k),~ [\mathbf{\Phi}_1]_{k\tau} =\sum_{\rho=1}^{\operatorname{min}(k,\tau)}\phi(\rho,k)\bar{B}_{\rho-1}\psi(\rho,\tau) $ and  $[\mathbf{\Phi}_2]_{k\tau} = \sum_{\rho=1}^{\operatorname{min}(k,\tau)}\phi(\rho,k)\bar{B}_{\rho-1}\psi(\rho,\tau)\\ \begin{bmatrix}L_\tau &-M_\tau^{\prime}\end{bmatrix}$ for $k,\tau \in \mathcal K\backslash \{0\}.$ Thus, we expressed the state trajectory paramterized with $\{\mathbf{v}_k^*,\mu_k^*,~k \in \mathcal K\backslash \{0\}\}$. Next, we analyse the parametric linear complimentarity problem in \eqref{eq:LQOCP Cond5}-\eqref{eq:LQOCP Cond7} in detail. First, the vector representation of these problems is given by
\begin{align}\label{eq:pLCP1}
 \mathrm{pLCP}( x_k ^*):~\begin{bmatrix}D_k & -N_k^{\prime} \\ N_k & 0 \end{bmatrix}\begin{bmatrix}\mathbf{v}_k^* \\ \mu_k^* \end{bmatrix}+\begin{bmatrix}L_k^{\prime} \\ M_k\end{bmatrix} x_k ^* + \begin{bmatrix}d_k \\ r_k \end{bmatrix}  \perp \begin{bmatrix}\mathbf{v}_k^* \\ \mu_k^* \end{bmatrix}.
\end{align} 
Let $\tilde{\mathbf{M}} = \begin{bmatrix}D_1 & -N_1^{\prime} \\ N_1 & 0 \end{bmatrix}\oplus \cdots \oplus \begin{bmatrix}D_K & -N_K^{\prime} \\ N_K & 0 \end{bmatrix} , \tilde{\mathbf{q}} =\begin{bmatrix}L_1^{\prime} \\ M_1\end{bmatrix} \oplus \cdots \oplus  \begin{bmatrix}L_K^{\prime} \\ M_K\end{bmatrix}$ and $\tilde{\mathbf{s}} =  \begin{bmatrix}d_1^\prime &r_1^\prime &\cdots &d_K^\prime& r_K^\prime \end{bmatrix}^\prime$. 
Aggregating these parametric problems for all time steps $k \in \mathcal K \backslash \{0\}$, we obtain a single parametric linear complementarity problem as 
\begin{align}\label{eq:pLCP2}
   \mathrm{pLCP(\mathbf{x}_\mathbf{K}^*}):\quad  \tilde{\mathbf{M}}y_{\mathbf{K}}^*+\tilde{\mathbf{q}}\mathbf{x}_{\mathbf{K}}^* + \tilde{\mathbf{s}} \perp y_{\mathbf{K}}^*.
\end{align}
Substituting \eqref{eq:State ForwardEq3} in \eqref{eq:pLCP2} results in the following large scale linear complimentarity problem:
\begin{align}\label{eq:LCP1}
   \mathrm{LCP}(x_0):\quad  \mathbf{M}y_{\mathbf{K}}^*+\mathbf{q} \perp y_{\mathbf{K}}^*,
\end{align}\noindent
where $\mathbf{M} = \tilde{\mathbf{M}} + \tilde{\mathbf{q}}\mathbf{\Phi}_2$ and $\mathbf{q} =\tilde{\mathbf{q}}(\mathbf{\Phi}_{0}x_0+\mathbf{\Phi}_1p_{\mathbf{K}})+\tilde{\mathbf{s}}$.
Thus, we formulated the necessary conditions \eqref{eq:OCP Cond1}-\eqref{eq:OCP Cond7} as a single large scale linear complimentarity problem with the aid of Assumptions \ref{asm:Affine Co-state} and \ref{asm:Gamma Inv}. Solving \eqref{eq:LCP1} we obtain candidate optimal solution of OCP1 there by a candidate open-loop Nash equilibrium of NZDG1.

Next, we study sufficient conditions under which the solution of $\mathrm{LCP}(x_0)$ and $\mathrm{pLCP}(x_0)$ indeed minimize OCP1, and as a result provide an open-loop Nash equilibrium of NZDG1. The sufficient conditions provided in Theorem \ref{thm:suff1} requires both the minimized instantaneous and terminal Lagrangian functions to be convex in the state variables. Since the solutions $\bv_k^*$ are obtained from solving the parametric linear complementarity problem \eqref{eq:pLCP2}, upon substitution, the minimized Lagrangian functions may not be convex in the state variables. So, to derive the required sufficient conditions we transform the OCP1 as static optimization problem in the decision variables $(\tilde{\bu},\tilde{\bv})$. 
Toward this end, the objective function associated with the OCP1 can be written as
\begin{align}
 J(x_0,\tilde{\mathbf{u}},\tilde{\mathbf{v}})&=\sum_{k=0}^{K-1}\frac{1}{2}\bu_k^\prime R_k \bu_k +\sum_{k=0}^K\frac{1}{2}\left( x_k ^{\prime}Q_k x_k +\left(p_k+L_k\bv_k^{*}
-M_k^\prime \mu_k^*\right)^\prime x_k \right) \notag\\ 
&+\sum_{k=0}^{K}\left(\frac{1}{2}\mathbf{v}_k^{\prime}D_k\mathbf{v}_k+d^{\prime}_k\mathbf{v}_k+ x_k ^{\prime}L_k (\bv_k-\bv_k^*)+{\mu_k^*}^\prime M_k x_k \right),
\label{eq:objOCP}    
\end{align} 
where $\{(\mathbf{v}_k^*,\mu_k^*),~k \in \mathcal K\}$ is the solution of  $\mathrm{LCP}(x_0)$ and $\mathrm{pLCP}(x_0)$.
 We define value function $W_k,~k\in \mathcal K$ as
\begin{align*}
& W_k=\frac{1}{2}x_k^\prime E_k x_k + e_k^\prime x_k + w_k,
\end{align*}
where $E_k \in \mathbb{R}^{n \times n},~e_k \in \mathbb{R}^n,~w_k \in \mathbb{R}$ and $i \in \mathcal{N}$. Let the matrices $T_k:=R_k+ \mathbf{B}_k^\prime  E_{k+1}\mathbf{B}_k$ be invertible for all $k\in \mathcal K\backslash \{K\}$  with the matrices $E_k,~ k\in \mathcal K$ computed as the solution of the following backward Ricatti difference equations
\begin{subequations} 
	\begin{align}
	&E_k= A_k^\prime E_{k+1} A_k +Q_k -A_k^\prime E_{k+1}\mathbf{B}_k T_k^{-1} \mathbf{B}_k^\prime E_{k+1}A_k,~ E_K=Q_K,\label{eq:LQDGSC}\\
	&e_k=A_k^\prime e_{k+1}-A_k^\prime E_{k+1}\mathbf{B}_k T_k ^{-1}\mathbf{B}_k^\prime e_{k+1}+p_k+L_k\bv_k^* -M_k^\prime \mu_k^*,~e_K=p_K+L_K\bv_K^*-M_K^\prime \mu_K^*, \label{eq:lqdge}\\
	&w_k=w_{k+1}-e^\prime_{k+1} \mathbf{B}_kT_k^{-1}\mathbf{B}_k^\prime e_{k+1},~w_K=0.
	\end{align}  
	\label{eq:ric2}
\end{subequations} 
Then, by denoting  $\Delta_k = W_{k+1}-W_{k}$, and  using the sum $\sum_{K=0}^{K-1}\Delta_k$ and  \eqref{eq:ric2}  we can write the objective function \eqref{eq:objOCP} as  
\begin{align} 
 J(x_0,\tilde{\mathbf{u}},\tilde{\mathbf{v}})& =   W_0 +\sum_{k=0}^{K-1} \frac{1}{2}||\mathbf{u}_k  +T_k^{-1} \mathbf{B}_k^\prime\left( E_{k+1}A_kx_k+ e_{k+1}\right)  ||^2_{T_k}\notag  \\ 
 &+\sum_{k=0}^{K}\left(\frac{1}{2}\mathbf{v}_k^{\prime}D_k\mathbf{v}_k+d^{\prime}_k\mathbf{v}_k+ x_k ^{\prime}L_k (\bv_k-\bv_k^*)+{\mu_k^*}^\prime M_k x_k \right).\label{eq:static}
\end{align} 

The next lemma relates how the candidate  optimal control \eqref{eq:decision Eq}  obtained by solving  the two-point boundary value problem \eqref{eq:LQOCP Cond2}-\eqref{eq:LQOCP Cond4} is related to the minimizer of  the objective function  \eqref{eq:static}.

\begin{lemma} Let the set of matrices $\{T_k,~k\in \mathcal K\backslash \{K\}\}$ be invertible and 
	the solutions $E_k$ of the symmetric matrix Riccati difference equation  
	\eqref{eq:LQDGSC} exist for all $k\in \mathcal K$. If the two-point boundary value problem 
	\begin{subequations} 
	\begin{align}
	\bar{\lambda}_k &= A^\prime_k \bar{\lambda}_{k+1}+Q_k \bar{x}_k+p_k+L_k\mathbf{v}_k^{*} -M^{\prime}_k\mu_k^{*},\label{eq:lbareq1}\\ \bar{\lambda}_K &= Q_K x_k+p_K+L\mathbf{v}_K^*-M_K^{\prime}\mu_K^*,\label{eq:lbareq2}\\
	\bar{x}_{k+1}&=A_k \bar{x}_k - \mathbf{B}_kR_k^{-1} \mathbf{B}_k^\prime \bar{\lambda}_{k+1},~\bar{x}_0=x_0	
	\end{align}
	\label{eq:lemtbvp}
	\end{subequations} 
	has a unique solution, then we set $\bar{\bu}_k=-{R}^{-1}_k \mathbf{B}_k^\prime \bar{\lambda}_{k+1}$ and $\bar{e}_k:=\bar{\lambda}_k-E_k \bar{x}_k$. Then the sequences
	$\{\bar{x}_k,  \bar{e}_k\}$ solve equations $\bar{\mathbf{u}}_k+T_k^{-1}\mathbf{B}_k^\prime \left(E_{k+1}A_k \bar{x}_k+\bar{e}_{k+1}\right)=0$ and \eqref{eq:lqdge}.
	\label{lem:NCtoSC}
\end{lemma}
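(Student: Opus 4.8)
The plan is to run the classical Riccati ``sweep'' substitution. The definition $\bar{e}_k := \bar{\lambda}_k - E_k\bar{x}_k$ is nothing but the ansatz $\bar{\lambda}_k = E_k\bar{x}_k + \bar{e}_k$, so I would push this ansatz through the three equations of \eqref{eq:lemtbvp}, using the symmetric Riccati equation \eqref{eq:LQDGSC} to annihilate every term proportional to $\bar{x}_k$, leaving exactly the backward linear recursion \eqref{eq:lqdge} for $\bar{e}_k$ together with the feedback form of the control. The hypotheses that $\{T_k\}$ is invertible and that $\{E_k\}$ exists are precisely what make each step legitimate, and the assumed uniqueness of the solution of \eqref{eq:lemtbvp} guarantees that $\{\bar{x}_k,\bar{\lambda}_k\}$, hence $\{\bar{e}_k\}$, are well defined.

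First I would verify the terminal condition. From \eqref{eq:lbareq2} one has $\bar{\lambda}_K = Q_K\bar{x}_K + p_K + L_K\mathbf{v}_K^* - M_K^\prime\mu_K^*$; since $E_K = Q_K$, subtracting $E_K\bar{x}_K$ gives $\bar{e}_K = p_K + L_K\mathbf{v}_K^* - M_K^\prime\mu_K^*$, which is the terminal value of $e_K$ in \eqref{eq:lqdge}. Next I would derive the feedback identity. The state equation in \eqref{eq:lemtbvp} reads $\bar{x}_{k+1} = A_k\bar{x}_k - \mathbf{B}_kR_k^{-1}\mathbf{B}_k^\prime\bar{\lambda}_{k+1} = A_k\bar{x}_k + \mathbf{B}_k\bar{\mathbf{u}}_k$. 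Substituting $\bar{\lambda}_{k+1} = E_{k+1}\bar{x}_{k+1} + \bar{e}_{k+1}$ into $R_k\bar{\mathbf{u}}_k = -\mathbf{B}_k^\prime\bar{\lambda}_{k+1}$ and then replacing $\bar{x}_{k+1}$ by $A_k\bar{x}_k + \mathbf{B}_k\bar{\mathbf{u}}_k$ yields $(R_k + \mathbf{B}_k^\prime E_{k+1}\mathbf{B}_k)\bar{\mathbf{u}}_k = -\mathbf{B}_k^\prime(E_{k+1}A_k\bar{x}_k + \bar{e}_{k+1})$; since $T_k = R_k + \mathbf{B}_k^\prime E_{k+1}\mathbf{B}_k$ is invertible, this is the first claimed identity $\bar{\mathbf{u}}_k = -T_k^{-1}\mathbf{B}_k^\prime(E_{k+1}A_k\bar{x}_k + \bar{e}_{k+1})$, and it also gives the closed form $\bar{x}_{k+1} = A_k\bar{x}_k - \mathbf{B}_kT_k^{-1}\mathbf{B}_k^\prime(E_{k+1}A_k\bar{x}_k + \bar{e}_{k+1})$.

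Finally I would insert these into the co-state equation \eqref{eq:lbareq1}. Writing $\bar{\lambda}_k = E_k\bar{x}_k + \bar{e}_k$ on the left, $\bar{\lambda}_{k+1} = E_{k+1}\bar{x}_{k+1} + \bar{e}_{k+1}$ on the right, and substituting the closed form for $\bar{x}_{k+1}$, the coefficient of $\bar{x}_k$ on the right-hand side collapses to $Q_k + A_k^\prime E_{k+1}A_k - A_k^\prime E_{k+1}\mathbf{B}_kT_k^{-1}\mathbf{B}_k^\prime E_{k+1}A_k = E_k$ by \eqref{eq:LQDGSC}, which cancels the $E_k\bar{x}_k$ on the left; what remains is $\bar{e}_k = A_k^\prime\bar{e}_{k+1} - A_k^\prime E_{k+1}\mathbf{B}_kT_k^{-1}\mathbf{B}_k^\prime\bar{e}_{k+1} + p_k + L_k\mathbf{v}_k^* - M_k^\prime\mu_k^*$, i.e. exactly \eqref{eq:lqdge} with $e_k$ replaced by $\bar{e}_k$. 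Combined with the matched terminal condition, $\{\bar{e}_k\}$ solves the same backward recursion as $\{e_k\}$, and $\bar{\mathbf{u}}_k + T_k^{-1}\mathbf{B}_k^\prime(E_{k+1}A_k\bar{x}_k + \bar{e}_{k+1}) = 0$ holds, which is the assertion of the lemma.

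I do not anticipate a genuine conceptual obstacle: the argument is a routine completion-of-squares/Riccati-substitution computation. The only care required is bookkeeping of the transposes and of the block structure $\mathbf{B}_k = [\,B_k^1\ \cdots\ B_k^N\,]$ together with the stacked control $\mathbf{u}_k$, and keeping track of the fact that $T_k$-invertibility (assumed) is what licenses solving for $\bar{\mathbf{u}}_k$ and that the Riccati identity \eqref{eq:LQDGSC} is precisely the algebraic fact needed to close the recursion for $\bar{e}_k$.
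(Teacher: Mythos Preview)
Your proposal is correct and follows essentially the same Riccati-sweep substitution as the paper: both arguments insert the ansatz $\bar{\lambda}_k=E_k\bar{x}_k+\bar{e}_k$ into \eqref{eq:lemtbvp}, use the state equation to eliminate $\bar{x}_{k+1}$, and invoke the Riccati identity \eqref{eq:LQDGSC} to cancel the $\bar{x}_k$-terms, leaving \eqref{eq:lqdge}. Your version additionally spells out the terminal condition $\bar{e}_K=p_K+L_K\mathbf{v}_K^*-M_K^\prime\mu_K^*$ and solves for $\bar{\mathbf{u}}_k$ directly rather than verifying the identity equals zero, but these are organizational rather than substantive differences.
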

\begin{proof}
	Firstly, we have
	\begin{align*}
	\bar{\bu}_k+T_k^{-1} \mathbf{B}_k^\prime\left( E_{k+1} A_k \bar{x}_k+\bar{e}_{k+1}\right) &=-R_k^{-1} \mathbf{B}_k^\prime\bar{\lambda}_{k+1}+
	T_k^{-1}\mathbf{B}_k^\prime E_{k+1}
	\left(\bar{x}_{k+1}+\mathbf{B}_k R_k^{-1}\mathbf{B}_k^\prime \bar{\lambda}_{k+1}\right)+T_k^{-1}\mathbf{B}_k^\prime \bar{e}_{k+1}\\
	 &= \left(-R_k^{-1}+T_k^{-1}\left(
	R_k+\mathbf{B}_k ^\prime E_{k+1}\mathbf{B}_k\right)R_k^{-1} \right) \mathbf{B}_k^\prime \bar{\lambda}_{k+1}=0.
	\end{align*}
	To prove \eqref{eq:lqdge} we have
	\begin{align*}
&	A^\prime_k \bar{e}_{k+1}-A^\prime_k E_{k+1}\mathbf{B}_k T_k^{-1}\mathbf{B}_k^\prime\bar{e}_{k+1}+p_k+L_k\bv_k^*-M_k^\prime \mu_k^*-\bar{e}_k \\ & \hspace{1.5in}	=p_k+L_k\bv_k^*-M_k^\prime \mu_k^*+A_k^\prime \bar{\lambda}_{k+1}-\bar{e}_{k}-A_k^\prime E_{k+1} \mathbf{B}_k T_{k}^{-1}\mathbf{B}_k^\prime \bar{\lambda}_{k+1}\\& \hspace{1.5in}+
	A_k^\prime E_{k+1}\left(\mathbf{B}_k T_k^{-1}\mathbf{B}_k^\prime E_{k+1}-\mathbf{I}\right) \left(A_k\bar{x}_k-\mathbf{B}_k R_k^{-1}\mathbf{B}_k^\prime \bar{\lambda}_{k+1}\right)\\ &\hspace{1.5in}
=\left(p_k+L_k\bv_k^*-M_k^\prime \mu_k^*\right)+A_k^\prime \bar{\lambda}_{k+1}+Q_kx_k-\bar{\lambda}_k \\
	&\hspace{1.5in}-\left(A_k^\prime E_{k+1}A_k-A_k^\prime E_{k+1}
	\mathbf{B}_k T_k^{-1} \mathbf{B}_k^\prime E_{k+1}A_k+Q_k-E_k \right)x_k\\
&\hspace{1.5in}+A_k^\prime E_{k+1}B_k \left(R_k^{-1}-T_k^{-1}
	-T_k^{-1}\mathbf{B}_k^\prime E_{k+1} \mathbf{B}_k R_k^{-1}\right) B_k^\prime \bar{\lambda}_{k+1}=0.
	\end{align*}
	The last step in the above expression is obtained by using \eqref{eq:LQDGSC} and \eqref{eq:lbareq1},\eqref{eq:lbareq2}. 
 \end{proof}

In the following lemma we provide conditions under which the objective function \eqref{eq:static} is a strictly convex function of $(\tilde{\bu},\tilde{\bv})$.
\begin{lemma} 
Let the solution $E_k$ of the symmetric Ricatti equation 	\eqref{eq:LQDGSC} exist. Let 
\begin{align}
\Upsilon_{k}^{\tau}  =
\begin{cases} 
\mathbf{B}_\tau E_{\tau+1} A_{\tau}A_{\tau-1}\dots A_{k+1} &0 \leq k < K-1 \\ 
\mathbf{0}& k = K-1  
\end{cases}
\end{align} Now, we  define the matrix
\begin{align}
\mathbf{H}&=\begin{bmatrix} \mathbf{Y} & \mathbf{C}^\prime  \\ \mathbf{C} & \mathbf{D} \end{bmatrix},
\label{eq:Hessian}
\end{align}
\begin{multline*} 
where 
~\mathbf{D}=\oplus_{k=0}^{K} D_k,\quad 
{\mathbf{C} = \begin{bmatrix}
	\mathbf{0} &  \mathbf{B}_0^\prime L_1 & \mathbf{B}_0^\prime A_1^\prime  L_2 &  \hdots & \mathbf{B}_0^\prime A_{1}^\prime  \cdots  A_{K-2}^\prime  A_{K-1}^\prime L_K\\ \mathbf{0} & \mathbf{0} & \mathbf{ B}_1^\prime L_2 & \hdots &  \mathbf{B}_1^\prime A_{2}^\prime \cdots A_{K-2}^\prime A_{K-1}^\prime L_K \\ \mathbf{0} & \mathbf{0} & \mathbf{0} & \hdots & \mathbf{B}_2^\prime A_{3}^\prime \cdots A_{K-2}^\prime A_{K-1}^\prime L_K \\ 
	\vdots & \vdots & \vdots & \mathbf{0} & \ddots & \vdots \\
	\mathbf{0} & \mathbf{0} & \mathbf{0} &    \hdots & \mathbf{B}_{K-1} ^\prime L_K
	\end{bmatrix}}
\end{multline*}
and $\mathbf{Y}$ is a $Km \times Km$ matrix  where each block submatrix $[Y]_{lk}$ is a $m \times m$  matrix such that for $k \in \mathcal{K}\backslash \{K\}$
\begin{align*}
[Y]_{lk}& = \begin{cases} \mathbf{B}_k^\prime E_{k+1}A_k\dots A_{l+1}\mathbf{B}_l+ \mathbf{B}_k^\prime \left( \sum_{\tau = k+1}^{K-1}( \Upsilon_{k}^{\tau})^\prime T_{
	\tau}^{-1}( \Upsilon_{k}^{\tau}) A_k\dots A_{l+1}\right)\mathbf{B}_{l},~0 \leq l < k, \\ 
T_k + \mathbf{B}_k^\prime \left[\sum_{\tau = k+1}^{K-1}(  \Upsilon_{k}^{\tau})^\prime T_{
	\tau}^{-1}(  \Upsilon_{k}^{\tau}) \right]\mathbf{B}_k,~ l = k, 
\end{cases} \\ [Y]_{kl}& = [Y]_{lk}^\prime.
\end{align*} 
If the matrix $\mathbf{H}$ is positive definite then the objective function \eqref{eq:static} is
a strictly convex function of $(\tilde{\bu},\tilde{\bv})$.
\label{eq:convexity}
\end{lemma}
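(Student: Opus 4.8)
The plan is to fix the linear-complementarity data $\{(\bv_k^*,\mu_k^*),~k\in\mathcal K\}$ once and for all and show that the right-hand side of \eqref{eq:static} is then a real-valued quadratic function of the stacked decision vector $(\tilde{\bu},\tilde{\bv})$ whose Hessian equals $\mathbf{H}$; strict convexity then follows immediately from the hypothesis that $\mathbf{H}$ is positive definite. First I would record that, with $\{(\bv_k^*,\mu_k^*)\}$ frozen, the vectors $\{e_k\}$ obtained from \eqref{eq:lqdge} and the scalar $W_0=\tfrac12 x_0^\prime E_0 x_0+e_0^\prime x_0+w_0$ are constants, while the linearity of \eqref{eq:LQStatedynamics} yields $x_k=\Phi_0^k x_0+\sum_{l=0}^{k-1}\Phi_{l+1}^k\mathbf{B}_l\bu_l$, where $\Phi_k^\tau:=A_{\tau-1}A_{\tau-2}\cdots A_k$ for $k<\tau$ and $\Phi_k^k:=\mathbf{I}$ is the transition matrix of the homogeneous part $x_{k+1}=A_kx_k$; in particular $x_k$ is affine in $\tilde{\bu}$ and does not depend on $\tilde{\bv}$. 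Substituting this into \eqref{eq:static} makes $J(x_0,\cdot,\cdot)$ a quadratic in $(\tilde{\bu},\tilde{\bv})$, and a quadratic is strictly convex precisely when its Hessian is positive definite, so it suffices to compute that Hessian and match it with $\mathbf{H}$.

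Next I would compute the three blocks of the Hessian, arranged as $\begin{bmatrix}\partial^2_{\tilde{\bu}\tilde{\bu}}J & \partial^2_{\tilde{\bu}\tilde{\bv}}J\\ \partial^2_{\tilde{\bv}\tilde{\bu}}J & \partial^2_{\tilde{\bv}\tilde{\bv}}J\end{bmatrix}$. The $\tilde{\bv}\tilde{\bv}$ block is immediate: in \eqref{eq:static} the only term quadratic in $\tilde{\bv}$ is $\sum_k\tfrac12\bv_k^\prime D_k\bv_k$, with no coupling between distinct $\bv_k$'s (the remaining $\tilde{\bv}$-terms $d_k^\prime\bv_k$, $-x_k^\prime L_k\bv_k^*$, ${\mu_k^*}^\prime M_k x_k$ are affine), so this block is $\mathbf{D}=\oplus_{k=0}^K D_k$. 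The mixed block arises only from the bilinear term $\sum_k x_k^\prime L_k\bv_k$; inserting the expression for $x_k$ gives $\partial^2 J/\partial\bu_l\,\partial\bv_k=\mathbf{B}_l^\prime(\Phi_{l+1}^k)^\prime L_k=\mathbf{B}_l^\prime A_{l+1}^\prime\cdots A_{k-1}^\prime L_k$ for $l<k$ and $\mathbf{0}$ otherwise, which is exactly the block pattern of $\mathbf{C}$ (equivalently $\partial^2_{\tilde{\bv}\tilde{\bu}}J=\mathbf{C}^\prime$). The $\tilde{\bu}\tilde{\bu}$ block comes only from the residual sum $\sum_k\tfrac12\|\bu_k+s_k\|^2_{T_k}$ with $s_k:=T_k^{-1}\mathbf{B}_k^\prime(E_{k+1}A_kx_k+e_{k+1})$: the term $\tfrac12\bu_k^\prime T_k\bu_k$ puts $T_k$ on the $k$-th diagonal block; the cross term $\bu_k^\prime T_k s_k$ contributes $\mathbf{B}_k^\prime E_{k+1}A_k\cdots A_{l+1}\mathbf{B}_l$ to the $(l,k)$ block, since $\bu_l$ ($l<k$) enters $s_k$ through $x_k$; and for each later stage $\tau>k$ the piece $\tfrac12 s_\tau^\prime T_\tau s_\tau$ contributes $\mathbf{B}_k^\prime(\Upsilon_k^\tau)^\prime T_\tau^{-1}\Upsilon_k^\tau\mathbf{B}_k$ to the diagonal and $\mathbf{B}_k^\prime(\Upsilon_k^\tau)^\prime T_\tau^{-1}\Upsilon_k^\tau A_k\cdots A_{l+1}\mathbf{B}_l$ to the $(l,k)$ block, because $\bu_k$ enters $s_\tau$ with coefficient $T_\tau^{-1}\mathbf{B}_\tau^\prime E_{\tau+1}A_\tau A_{\tau-1}\cdots A_{k+1}\mathbf{B}_k=T_\tau^{-1}\Upsilon_k^\tau\mathbf{B}_k$. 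Summing $\tau$ from $k+1$ to $K-1$ and collecting reproduces exactly $[Y]_{kk}$ and $[Y]_{lk}$, with $[Y]_{kl}=[Y]_{lk}^\prime$ from the symmetry of the Hessian, so this block is $\mathbf{Y}$.

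Since the three blocks are $\mathbf{Y}$, $\mathbf{C}$ and $\mathbf{D}$, the Hessian of $J(x_0,\cdot,\cdot)$ is $\mathbf{H}$, and positive definiteness of $\mathbf{H}$ gives strict convexity of $J(x_0,\cdot,\cdot)$ in $(\tilde{\bu},\tilde{\bv})$, which is the assertion. The main obstacle will be the $\tilde{\bu}\tilde{\bu}$ block: one must keep track of the fact that every $\bu_k$ feeds into all later states $x_\tau$, hence into all later residuals $\tfrac12\|\bu_\tau+s_\tau\|^2_{T_\tau}$ (through $\bu_\tau^\prime T_\tau s_\tau$ and through $\tfrac12 s_\tau^\prime T_\tau s_\tau$), and then verify that the coefficient of $\bu_k$ inside $s_\tau$ is $T_\tau^{-1}\mathbf{B}_\tau^\prime E_{\tau+1}A_\tau A_{\tau-1}\cdots A_{k+1}\mathbf{B}_k$, so that the accumulated cross-contributions collapse to precisely the $(\Upsilon_k^\tau)^\prime T_\tau^{-1}\Upsilon_k^\tau$ expressions in the definition of $\mathbf{Y}$; everything else is routine.
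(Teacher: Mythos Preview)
Your proposal is correct and follows essentially the same route as the paper: compute the Hessian of \eqref{eq:static} with respect to $(\tilde{\bu},\tilde{\bv})$ after eliminating $x_k$ via the dynamics, identify the three blocks as $\mathbf{Y}$, $\mathbf{C}$, $\mathbf{D}$, and conclude strict convexity from positive definiteness of $\mathbf{H}$. Your write-up is in fact more explicit than the paper's own proof in explaining \emph{why} the residual terms $\tfrac12\|\bu_\tau+s_\tau\|_{T_\tau}^2$ generate the $(\Upsilon_k^\tau)^\prime T_\tau^{-1}\Upsilon_k^\tau$ contributions, but the underlying argument is the same.
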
 %
\begin{proof}
We compute the Hessian matrix of the objective function \eqref{eq:static} with respect the decision variables $(\tilde{\bu},\tilde{\bv})$ by eliminating the state variable $x_k$. Now, we have 
$\frac{\partial^2 J}{\partial (\bv_k)^\prime \partial \bv_k}=D_k$,~$\frac{\partial^2 J}{\partial (\bv_k)^\prime \partial v_l}=0$, for $k\neq l$, $k,l\in \mathcal K$. Calculating the remaining second order partial derivatives we get  
\begin{align*}
\quad [Y]_{lk} &=  \frac{\partial^2 J}{\partial (\bu_l)^\prime \partial \bu_k}   = \mathbf{B}_k^\prime E_{k+1}A_k\dots A_{l+1}\mathbf{B}_l  + \mathbf{B}_k^\prime \left[ \sum_{\tau = k+1}^{K-1}( \Upsilon_{k}^{\tau})^\prime T_{
	\tau}^{-1}(  \Upsilon_{k}^{\tau}) \right] A_k\dots A_{l+1}\mathbf{B}_{l},\; 0 \leq l < k, \\
[Y]_{kk}& =   \frac{\partial^2 J}{\partial (\bu_k)^\prime \partial \bu_k} = T_k + \mathbf{B}_k^\prime \left[\sum_{\tau = k+1}^{K-1}(  \Upsilon_{k}^{\tau})^\prime T_{
	\tau}^{-1}(  \Upsilon_{k}^{\tau}) \right]\mathbf{B}_k. \\
	[\mathbf{C}]_{lk}& = \frac{\partial^2 J}{\partial (\mathbf{u}_l)^\prime \partial \mathbf{v}_k} = \begin{cases} \mathbf{B}_l^\prime A_{l+1}^\prime \cdots A_{k-1}^\prime L_{k},~ l < k-1 \\
\mathbf{B}_{l}^\prime L_{k},~ l = k-1~\text{and}\\ 
\mathbf{0},~ l \geq k
\end{cases}
\end{align*}
Then, if the Hessian matrix $\mathbf{H}$ is positive definite, the objective function \eqref{eq:static} is strictly convex in $(\tilde{\bu},\tilde{\bv})$.
 \end{proof}

Next, using the above result in the next theorem we show that the solutions of $\mathrm{LCP}(x_0)$ and
$\mathrm{pLCP}(x_0)$ indeed provide the optimal solution of the OCP1. 
\begin{theorem}\label{thm:LCPOLNE}
	Let Assumptions  \ref{ass:LQfeasibility}, \ref{asm:Affine Co-state} and
	 \ref{asm:Gamma Inv} hold true. Let the Hessian matrix $\mathbf{H}$ given by
	\eqref{eq:Hessian} is positive definite. Then the solutions of $\mathrm{LCP}(x_0)$ and $\mathrm{pLCP}{(x_0)}$ constitute an open-loop Nash equilibrium of the NZDG1.	
\end{theorem}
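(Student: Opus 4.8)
The plan is to recover, from a solution of $\mathrm{LCP}(x_0)$ together with the associated $\mathrm{pLCP}(x_0)$, a candidate pair $(\tilde{\bu}^{*},\tilde{\bv}^{*})$ for OCP1, to show it satisfies the first-order necessary conditions of OCP1, to upgrade these to genuine optimality by exploiting the strict convexity supplied by Lemma~\ref{eq:convexity}, and finally to invoke Theorem~\ref{thm:verfication} to read off an open-loop Nash equilibrium of NZDG1. \textbf{Step 1 (reconstruction).} Given a solution $y_{\mathbf K}^{*}=(\bv_k^{*},\mu_k^{*})_{k\in\mathcal K}$ of $\mathrm{LCP}(x_0)$, I would run the construction of Section~\ref{sec:solvability} backwards: set the state trajectory by \eqref{eq:State ForwardEq3}, the co-states by $\lambda_k^{*}=H_k x_k^{*}+\beta_k$ with $H_k,\beta_k$ from \eqref{eq:RDE} (legitimate under Assumptions~\ref{asm:Affine Co-state} and \ref{asm:Gamma Inv}), and the controls by \eqref{eq:decision Eq}. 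Since each implication leading from \eqref{eq:LQOCP Cond1}--\eqref{eq:LQOCP Cond7} down to \eqref{eq:LCP1} is reversible, this collection satisfies the necessary conditions \eqref{eq:LQOCP Cond1}--\eqref{eq:LQOCP Cond7}; hence $(\tilde{\bu}^{*},\tilde{\bv}^{*})$ is an admissible candidate.

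\textbf{Step 2 (static convex reformulation).} Because the state is affine in $\tilde{\bu}$ along \eqref{eq:LQStatedynamics}, eliminating it from $J$ and from the constraints \eqref{eq:LQconstraints} recasts OCP1 as the minimization of the quadratic objective \eqref{eq:static} over a polyhedron in $(\tilde{\bu},\tilde{\bv})$, whose Hessian is exactly $\mathbf H$ of \eqref{eq:Hessian}. Since $\mathbf H$ is positive definite by hypothesis, Lemma~\ref{eq:convexity} gives that this objective is strictly convex, so OCP1 has a unique global minimizer and the KKT conditions of the polyhedral program are both necessary and sufficient for it. \textbf{Step 3 (KKT verification).} I would then check that the candidate of Step~1 meets those KKT conditions. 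The stationarity condition in $\tilde{\bu}$ is $\bu_k^{*}+T_k^{-1}\mathbf{B}_k^{\prime}\left(E_{k+1}A_k x_k^{*}+e_{k+1}\right)=0$, and Lemma~\ref{lem:NCtoSC} shows precisely that the control produced by the two-point boundary value problem \eqref{eq:LQOCP Cond2}--\eqref{eq:LQOCP Cond4} satisfies this, with $e_k$ solving \eqref{eq:lqdge}. The stationarity in $\tilde{\bv}$, together with primal feasibility and complementary slackness for the multipliers $\{\mu_k^{*}\}$, is exactly \eqref{eq:LQOCP Cond5}--\eqref{eq:LQOCP Cond7}, i.e.\ the pLCP, which holds by construction. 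Hence $(\tilde{\bu}^{*},\tilde{\bv}^{*})$ is the unique optimal solution of OCP1.

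\textbf{Step 4 (conclusion).} By Theorem~\ref{th:LQ Restrictions} the conditions \eqref{eq:potcond1} make NZDG1 an OLPDG whose potential functions are the cost functions of OCP1; thus Theorem~\ref{thm:verfication} applies (its hypotheses being met by Theorem~\ref{th:LQ Restrictions} together with Assumption~\ref{ass:LQfeasibility}), and the optimal solution of OCP1 is an open-loop Nash equilibrium of NZDG1. Therefore the solutions of $\mathrm{LCP}(x_0)$ and $\mathrm{pLCP}(x_0)$ constitute an open-loop Nash equilibrium of NZDG1. The step I expect to be the main obstacle is Step~3: reconciling the maximum-principle data (the co-state recursion and two-point boundary value problem) with the eliminated-state KKT stationarity of \eqref{eq:static}, and confirming that the $\mu_k^{*}$ act as genuine KKT multipliers for the affine constraints with no loss of feasibility; Lemmas~\ref{lem:NCtoSC} and \ref{eq:convexity} are tailored to carry exactly this bookkeeping. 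The detour through the static reformulation is forced because, as remarked after \eqref{eq:static}, substituting $\bv_k^{*}$ can destroy convexity of the minimized Lagrangian in the state, so the Arrow-type sufficient condition of Theorem~\ref{thm:suff1} is not directly available.
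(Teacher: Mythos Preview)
Your proposal is correct and follows essentially the same approach as the paper: reconstruct the candidate from the $\mathrm{LCP}(x_0)$/$\mathrm{pLCP}(x_0)$ solution, reformulate OCP1 as the static strictly convex program \eqref{eq:static}, verify its KKT conditions via Lemma~\ref{lem:NCtoSC} and the pLCP identities, and conclude through the OLPDG structure. The only minor elision is that the paper's KKT stationarity in $\tilde{\bu}$ (equation \eqref{eq:kkt1}) carries additional cross-terms in $\bv_\tau-\bv_\tau^{*}$ and $\mu_\tau-\mu_\tau^{*}$ beyond what Lemma~\ref{lem:NCtoSC} directly handles, but these vanish immediately once $\bv=\bv^{*}$ and $\mu=\mu^{*}$, so your Step~3 and the paper's argument coincide.
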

\begin{proof}
	Let $\{(\mathbf{v}_k^*,\mu_k^*),~k \in \mathcal K\}$ be the solution of $\mathrm{LCP}(x_0)$ and $\mathrm{pLCP}(x_0)$.
	Then transforming the objective function of the OCP1 as \eqref{eq:static} and consider the minimization
 problem subject to the state dynamics $x_{k+1}=A_kx_k +\mathbf{B}_k \bu_k$ and the constraints $M_k x_k+N_k \bv_k+r_k \geq 0$, $\bv_k\geq 0$.  Since $\mathbf{H}$  is positive definite, we have that the objective function  $J(x_0,\tilde{\bu},\tilde{\bv})$ is strictly convex in $(\tilde{\bu},\tilde{\bv})$ from Lemma \ref{eq:convexity}. Also, from Assumption \ref{asm:feasibility}, the sets $\{U_k^i,~k\in \mathcal K \backslash \{K\}\}$ and $\{V_k^i,~k\in \mathcal K\}$ for all $i\in \mathcal N$ are non-empty, convex and bounded. Therefore, by solving the KKT conditions, we obtain the solution of the static optimization. The Lagrangian associated with this optimization problem is given by
    \begin{align}
    \mathcal L= J(x_0,\tilde{\bu},\tilde{\bv})-\sum_{k=0}^{K}\mu_k^\prime \left(M_k x_k +N_k \bv_k+r_k\right).
    \end{align}
	The KKT conditions are then given by
	\begin{subequations}
	\begin{align}
&T_k \left(\bu_k+T_k^{-1}\mathbf{B}_k^\prime (E_{k+1}A_kx_k+e_{k+1})\right)\notag+\mathbf{B}_k^\prime \sum_{\tau = k+1}^{K-1} \left(( \Upsilon_{k}^{\tau})^{\prime }\left(\bu_\tau + T_\tau^{-1} \mathbf{B}_\tau^\prime ( E_{\tau+1}A_\tau x_\tau +e_{\tau+1}\right)\right)\\& +\mathbf{B}_k^\prime \left(L_{k+1} (\bv_{k+1}-\bv_{k+1}^*)-M_{k+1}^\prime (\mu_{k+1}-\mu_{k+1}^*)
	\right)+\sum_{\tau=k+1}^{K-1}\left(A_{\tau}A_{\tau-1}\cdots A_{k+1} \mathbf{B}_k\right)^\prime \left(
	L_{\tau+1}(\bv_{\tau+1}-\bv_{\tau+1}^*)\right)\notag\\&-\sum_{\tau=k+1}^{K-1}\left(M^\prime_{\tau+1}(\mu_{\tau+1}-\mu_{\tau+1}^*)\right) =0,  \label{eq:kkt1} \\
	&0\leq D_k \bv_k -N_k^\prime  \mu_k +L_k^\prime  x_k +d_k \perp \bv_k\geq 0, \label{eq:kkt2}\\
	&0\leq M_k x_k+N_k \bv_k +r_k \perp \mu_k \geq 0. \label{eq:kkt3} 
	\end{align}
	\end{subequations}
	Let $x_k^*,~k\in \mathcal K$ be the state trajectory generated by the solution $\{(\mathbf{v}_k^*,\mu_k^*),~k \in \mathcal K\}$ using \eqref{eq:State ForwardEq3}. Next, the 
	co-state defined by $\lambda_k^*=H_kx_k^*+\beta_k$ along with the state vector $x_k^*$ solve
	the two point boundary value problem \eqref{eq:lemtbvp}. Then, from Lemma \ref{lem:NCtoSC},
	it follows that  $ \bu_k^*+T_k^{-1}\mathbf{B}_k^\prime \left(E_{k+1} A_k x_k^*+e_{k+1}\right) =0$
	for all $k\in \mathcal K \backslash \{K\}$. Using this in \eqref{eq:kkt1}
	we obtain 
	\begin{align*} 
	\mathbf{B}_k^\prime \left(L_{k+1} (\bv_{k+1}-\bv_{k+1}^*)-M_{k+1}^\prime (\mu_{k+1}-\mu_{k+1}^*)
	\right)&+ \sum_{\tau=k+1}^{K-1}\left(A_{\tau}A_{\tau-1}\cdots A_{k+1} \mathbf{B}_k\right)^\prime (
	L_{\tau+1}(\bv_{\tau+1}-\bv_{\tau+1}^*)\\&-\sum_{\tau=k+1}^{K-1}  M^\prime_{\tau+1}(\mu_{\tau+1}-\mu_{\tau+1}^*)=0.
	\end{align*}
	The above equation and the remaining equations \eqref{eq:kkt2} and \eqref{eq:kkt2} are satisfied by $(\tilde{\bv}_k^*,\mu^*_k)$ as they are solutions of $\mathrm{pLCP}(x_k^*)$. 
	This implies, we have shown that the solutions are of the $\mathrm{LCP}(x_0)$ along with $\mathrm{pLCP}(x_0)$ indeed provide the optimal solution of the OCP1. Since the OCP1
	is associated with OLDPG we have that $(\tilde{\bu}^*,\tilde{\bv}^*)$, obtained from solutions of  $\mathrm{LCP}(x_0)$ and  $\mathrm{pLCP}(x_0)$, provides
	an open-loop Nash equilibrium of NZDG1.
\end{proof}
\section{Illustration : Smart grid system with energy storage}\label{sec:Examples}
To illustrate our results, we consider a smart grid system with energy storage. Smart grids provide opportunities for exploring distributed renewable energy sources. However, integrating solar and wind based sources has been a challenge in meeting the demand  due to their intermittent nature. Energy storage systems become critical in providing continuous power in the case of interruption and are often used as an emergency power supply during unforeseen outages  ( \cite{Oh:11} ). Power utilities can cut their generation costs by storing energy during the off‐peak hours and releasing during the peak hours. 
So, installing energy storage systems is crucial for an efficient, reliable and resilient smart grid, see \cite{Kolokotsa:19}. However, setting up a centralized energy storage unit for all the smart-grid users is not practical due to high set up costs and maintenance. One alternative would be to incentivize prosumers to install energy storage units at their homes; see Figure \ref{fig:SGdiagram} for an illustration. In this way, the smart grid storage system becomes decentralized. Additionally, the storage for each user is limited by the total resources available in the grid as well as the battery capacity. In \cite{zazo:16}, the authors study an energy demand problem in smart-grids without energy storage, and model the decision problem as a dynamic non-cooperative game without constraints. 
We build upon the model studied in \cite{zazo:16} by incorporating energy storage incentives for the prosumers, and model the decision problem as a dynamic game with inequality constraints.
\begin{figure}[h]
	\centering
	\includegraphics[scale = 0.375]{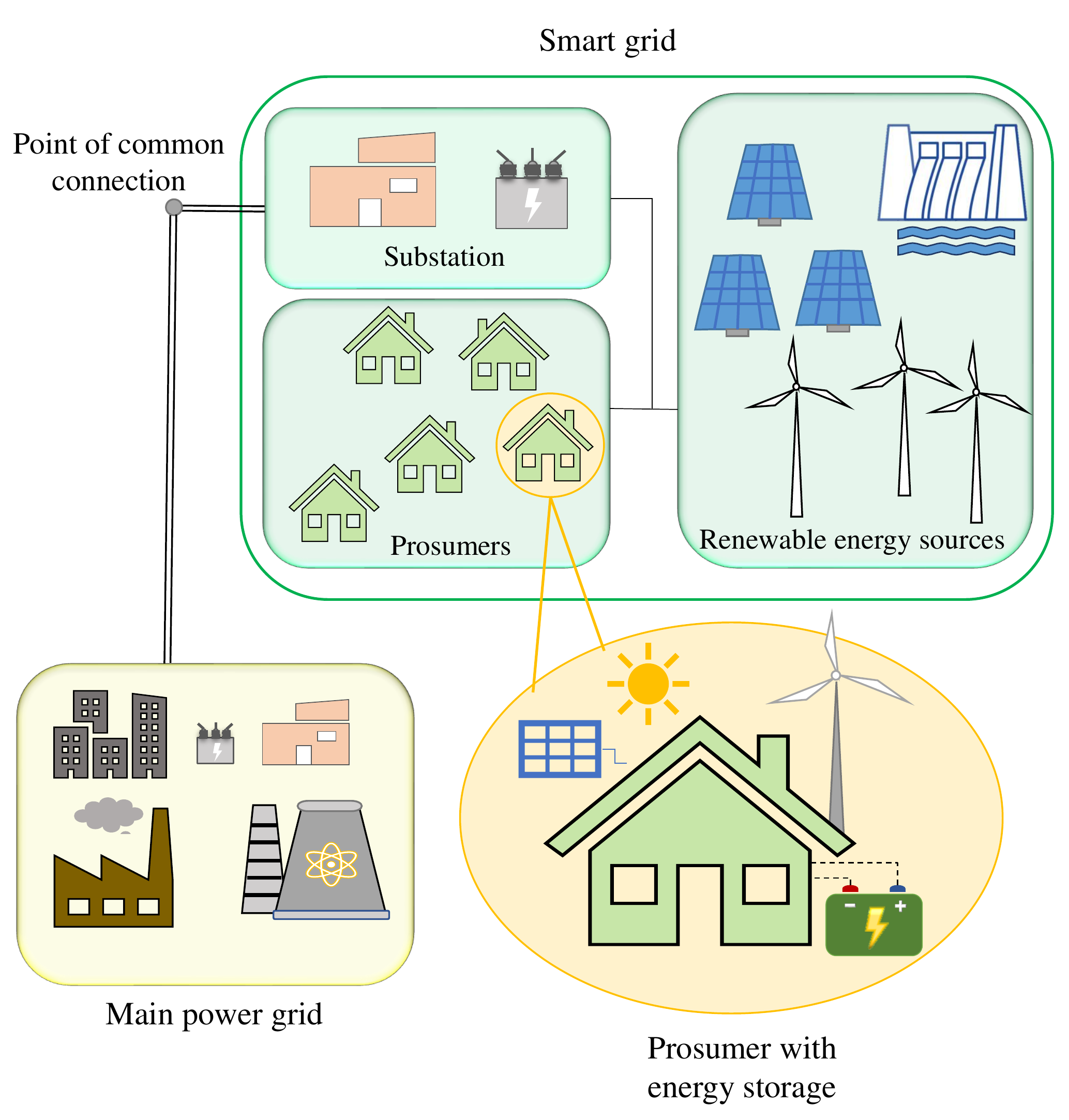}
	\caption{A smart grid system with energy storage}
	\label{fig:SGdiagram}
\end{figure} 
Consider a  smart grid with $N$ users who utilize the smart grid resources for different activities like heating, lighting, and other home appliances. Let $\mathcal{N} = \{1,2,\cdots,N\}$ be the set of users and the total time period be $\mathcal{K} = \{1,2,\cdots,K\}$. The final time $K$ is determined by the uniform interval with which the electrical data is processed in the grid in a day. For instance, if the data is processed every two hours in a day, $K =12$. The smart grid consists of $S$ type of energy resources such as solar, hydroelectric, coal etc., and each user $i \in \mathcal{N}$ consumes energy for $m_i$ activities. All resources are shared by all users. The state of the game at each time step, $X_k \in \mathbb{R}^S$ is the total amount of consumable resources in the smart grid. In the smart grid, users act as prosumers, implying that they not only consume energy, but also contribute the excess energy produced by renewable resources back to the grid. Furthermore, the resources can be autonomously recharged. Therefore, state of the game is governed by the discrete time dynamics
\begin{align}
X_{k+1} = \tilde{A}_k X_k +\sum_{i=1}^{N}\tilde{B}_k^i I_k^i,~k \in \mathcal{K} \backslash \{K\},
\end{align}
where, at time step $k$,  $\tilde{A}_k \in \mathbb{R}^{S \times S}  $ governs the energy which is  autonomously depleted or replenished,  $I_k^i \in \mathbb{R}^{m_i} $ denotes the amount of resources consumed or contributed by user $i$  and $\tilde{B}_k^i \in \mathbb{R}^{S\times m_i}$ is the weight associated with the resource expenditure or contribution. The smart grid authority has provided a battery storage unit for each user as a secondary storage unit. This is provided for the purpose of island-ed mode, operation under unforeseen disconnection of the smart grid from the main power grid; see \cite{ray:20}. The amount of resource stored in each player's battery unit at time instant $k$ is $K_k^i \geq 0 $. The total energy storage in batteries is limited to be $\epsilon_k >0$ units lower than the total resources of the grid, since excess of battery storage only results in higher storage costs. The limiting factor $\epsilon_k$ is chosen in such a way that the maximum storage limit represents the maximum energy required for the users to perform the essential activities during island-ed mode operation. Further, the amount of energy stored in the battery is limited by it's maximum capacity denoted by $K_{max}^i$. Therefore the constraints on the battery storage are given by
\begin{subequations}
	\begin{align}
	& \sum_{i \in \mathcal{N}} K_k^i  \leq  \sum_{i \in S}X_k^i - \epsilon_k, \label{eq:totalstorage}\\
	& 0 \leq K_k^i \leq K_{max}^i, ~ i \in \mathcal{N}.
	\end{align}
\end{subequations}
The costs incurred by users are attributed to unsatisfied demand, unbalanced resources and battery storage. Each user has a target demand to meet which is given by $P_k^i  X_k $, where  $P_k^i \in \mathbb{R}^{m_i \times S}$ denotes the demand matrix. The cost associated with unsatisfied demand is characterized by the following quadratic  function 
\begin{align}
(\Pi_k^i)_{ud} = \frac{1}{2}\left(P_k^i X_k -I_k^i\right)^\prime \tilde{ R}_k^i\left(P_k^i X_k -I_k^i\right),~k \in \mathcal K \backslash \{K\},
\end{align}
where $\tilde{R}_k^i ={ r}_k^i \mathbf{I} \in \mathbb{R}^{m_i \times m_i}$ with $r_k^i>0$. 
For higher values of the parameter $r_k^i$ the user prioritizes in minimizing the unsatisfied demand cost compared to other costs. Next, if the resources at a time step are higher than at the previous step, then there is a cost associated with storage. Similarly, there are also costs associated with productivity loss between consecutive time periods. These costs are modeled as 
\begin{align}
(\Pi_k^i)_{ur} = \frac{1}{2} \left( X_k -{X}_{k-1}\right)^\prime \tilde{Q}_k\left(X_k -{X}_{k-1}\right),
\end{align}
where $\tilde{Q}_k = q_k \mathbf{I} \in \mathbb{R}^{S \times S}$ with $q_k^i>0$.
For higher values of the parameter $q_k^i$ the user prioritizes in keeping the resources at  steady state levels without spikes. Each player incurs a battery storage cost which is given by 
\begin{align}
(\Pi_k^i)_{bs} = \frac{1}{2} K_k^{i^2} b_k^i,  
\end{align}
where $b_k^i$ represents the battery storage cost per energy unit for the user $i$. Along with these costs, there is an incentive provided to users for energy storage, which has two components: a player specific incentive which depends only on the player's battery resource, and a common incentive which depends on the total grid resource. The total incentive for Player $i$ is given by 
\begin{align} 
(\Pi_k^i)_{ic} = a_k^{i} K_k^i + X_k^\prime \tilde{L}_k\begin{bmatrix} {K}_k^1 & \hdots & K_k^N \end{bmatrix}^{\prime} ,
\end{align} where the parameter ${a}_k^i$ reflects the player specific incentive and the parameter $\tilde{L}_k \in \mathbb{R}^{S \times N}$ the common incentive. The salvage cost incurred by Player $i$ for the amount of resources in the last stage $K$ which is given by 
\begin{align}
\Pi^i_K = \frac{1}{2} X_K ^\prime \tilde{Q}_K  X_K,~\text{with}~\tilde{Q}_K = q_K \mathbf{I} \in \mathbb{R}^{S \times S}.
\end{align} 
Player $i$ using the consumption (or contribution) and storage schedules $\{I_k^i,~k\in \mathcal K \backslash \{K\},~K_k^i,~k\in \mathcal K\}$ seeks to minimize the total cost given by
\begin{align*} 
J^i &= \Pi_K^i+\sum_{k=0}^{K-1} \left[(\Pi_k^i)_{ud} +(\Pi_k^i)_{ur}\right] +\sum_{k=0}^K \left[(\Pi_k^i)_{bs}-(\Pi_k^i)_{ic}\right]\\
&=
\frac{1}{2}X_k ^\prime \tilde{Q}_K  X_k  + \sum_{k=0}^{K-1}\left(\frac{1}{2}\left(P^i X_k -I_k^i\right)^\prime \tilde{R}_k^i\left(P^i X_k -I_k^i\right)+\frac{1}{2}\left( X_k -{X}_{k-1}\right)^\prime \tilde{Q}_k\left( X_k -{X}_{k-1}\right)\right)\\ &+\sum_{k=0}^{K}\left(\frac{1}{2}K_k^{i^2} b_k^i - \left( a_k^iK_k^i +X_k ^\prime \tilde{L}_k^i \begin{bmatrix} {K}_k^1 & \hdots & K_k^N \end{bmatrix}^{\prime} \right)\right). 
\end{align*}
We transform the above dynamic game problem with inequality constraints to the standard form NZDG1 as follows.
\begin{align*}
& x_k  = \begin{bmatrix} X_k ^\prime &  {X}_{k-1}^\prime\end{bmatrix}^\prime, ~\mathbf{v}_k = \begin{bmatrix}
K_k^{1} & \dots & K_k^{N}
\end{bmatrix}^\prime,\\ &{u}_k^i = P^i X_k  - I_k^i,~  \mathbf{u}_k = \begin{bmatrix}
u_k^{1^\prime} & \dots & {u}_k^{N^\prime}
\end{bmatrix}^\prime.
\end{align*}
The smart grid resource allocation problem with energy storage is modeled as NZDG1 with parameters
defined as follows
\begin{align*}
&A_k \triangleq \begin{bmatrix} \tilde{A}_k+ \sum_{i = 1}^{N} \tilde{B}_k^i P^i & \mathbf{0}_{S \times S} \\ \mathbf{I}_S & \mathbf{0}_{S \times S}
\end{bmatrix}, \quad {B}_k^i \triangleq \begin{bmatrix} 
-\tilde{B}_k^i \\ \mathbf{0}_{S \times m_i}\end{bmatrix}, \\& {Q}_K =  \begin{bmatrix} 1 & 0 \\ 0 & 0 
\end{bmatrix} \otimes  \tilde{Q}_K ,\quad {Q}_k =  \begin{bmatrix} 1 & -1 \\ -1 & 1 
\end{bmatrix} \otimes \tilde{Q}_k , ~d_k^i =  -a_k^i \mathbf{e}_i, \\ &D_k^i = b_k^i (\mathbf{e}_i \mathbf{e}_i^\prime),~R_k^i = \mathbf{0}_{m_1 \times m_1}\oplus \cdots \oplus \tilde{R}_k^i \oplus \mathbf{0}_{m_N \times m_N} ,\\ 
&  M_k = \begin{bmatrix} 
\mathbf{1}_{1 \times S} & \mathbf{0}_{1 \times S} \\ \mathbf{0}_{N \times S} & \mathbf{0}_{N \times S}  
\end{bmatrix},~  N_k = -\begin{bmatrix} \mathbf{1}_{1 \times N}\\ \mathbf{I}_{N} \end{bmatrix},  \\&r_k=\begin{bmatrix} -\epsilon_k & K_{max}^1 & K_{max}^2 & \hdots & K_{max}^N \end{bmatrix}^\prime.
\end{align*}
Here, we observe that cost matrices $Q_k,~D_k^i,~d_k^i,~L_k,~k \in \mathcal{K}$ and $R_k^i,~k \in \mathcal{K} \backslash \{K\}$ satisfy the conditions in \eqref{eq:potcond1}. Therefore we can obtain the OCP1 associated with the related OLPDG by  \eqref{eq:ocpcond1} from Theorem \ref{th:LQ Restrictions}. 

For illustration purpose, we assume  $K = 12$, that is, that the data is processed every two hours in a day. We assume $S=4$ resources, $N=2$ users and $m_{i} = 2$ for both the users. The remaining parameters are taken as follows
\begin{align*}
&q_K = 2.5 ,~q_k = 1 ,~ r_k^1  = r_k^2 =  0.7,~b_k^1 =  b_k^2 = 1.6,\\ &a_k^1 = 3.4,~a_k^2 = 4,~\epsilon_k = 3.5,~ K_{max}^1 = 11.2,~ K_{max}^2 = 12.2,\\  
&\tilde{ L}_k =0.5 *\mathbf{1}_{3 \times 2},~ \tilde{A}_k = \mathbf{I}_S,~ 
 P_k^1 = P_k ^2 =0.375* \mathbf{1}_{2 \times 3},\\ 
 &\tilde{B}_k^1 =   \tilde{B}_k^2 =  \begin{bmatrix} 0.75 & 0 \\ 0 & 0.375 \\ 0  & 0.75\end{bmatrix},~
 X_0=\begin{bmatrix}4 \\ 4 \\ 4 \end{bmatrix} ,~X_{-1} = \begin{bmatrix}0 \\ 0\\ 0\end{bmatrix}.
\end{align*}
 We assume that both the users are identical in terms of energy demand. We can consider two households with similar energy requirements. However, we assume that user 2 with a 
 higher user specific incentive for storage, and reflected in the parameter values $a_k^1 = 3.4$ and $a_k^2 = 4$. Subsequently, the storage capacity of user 1's batteries are set to $K_{max}^{1} = 11.2$ units, and for user 2 are set as $K_{max}^2 = 12.2.$ units. It can be verified that the sufficient conditions in Lemma \ref{lem:NCtoSC} and Theorem \ref{thm:LCPOLNE} are satisfied with the chosen parameters. We have used the freely available software, the PATH solver (see \url{http://pages.cs.wisc.edu/~ferris/path.html}), for solving the linear complementarity problem \eqref{eq:LCP1}.

 Figure \ref{fig:alloc1} illustrates the evolution of resources in the grid. We have assumed three sources of energy in the grid. The consumption or contribution to the grid is represented in Figure \ref{fig:alloc2}. When a user consumes resources from the grid for an activity, the actual utility of the user  corresponding to this activity is negative. Likewise, contribution to the grid results in positive utility. Here,   $I_k^{i1},I_k^{i2}$ denote the consumption or contribution by the user $i$ for $m_i =2 $ activities.  We note that the consumption or contribution of both the users are identical due to the identical demand costs. Further, we observe that both the users switch from contribution to consumption for the first activity at time period  $k = 8$ and for the second activity at time period $k = 6$. As the users start to consume from the grid, the grid resources decrease which is evident from Figure \ref{fig:alloc1}. The evolution of battery storage decision for both the users is shown in Figure \ref{fig:alloc3}. User 2 has a higher user specific incentive as well as higher maximum storage  capacity in comparison with user 1. Consequently, user 2 stores higher amount of energy in its battery. User 1 utilizes the full battery storage capacity from $k =3$ to $k =11$, and user 2  utilizes full battery storage capacity from $k =4$ to $k =10$. At the final time step, $K =12$, the total grid resources fall to a lower value, due to the salvage cost. Since the total battery storage is also limited by the total grid resources and $\epsilon_k$, the battery storage of both the users also decreases at this stage.   
 
 \begin{figure*}[ht]
 	\centering 
 	\begin{subfigure}[t]{.225\textwidth}
 	\includegraphics[scale =0.5]{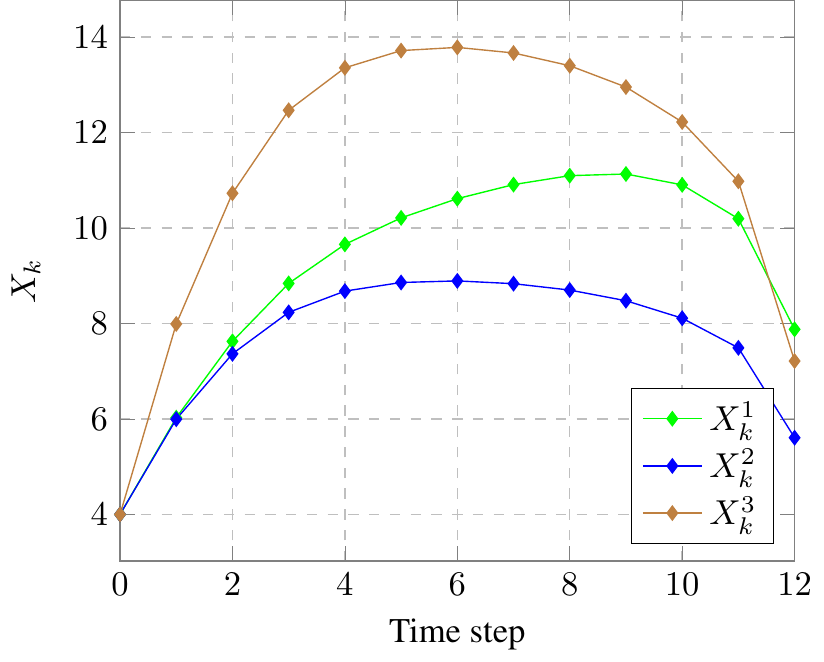}
 	\caption{Evolution of resources}
 		\label{fig:alloc1}
 	\end{subfigure} \qquad  
 	\begin{subfigure}[t]{.225\textwidth}
    \includegraphics[scale =0.5]{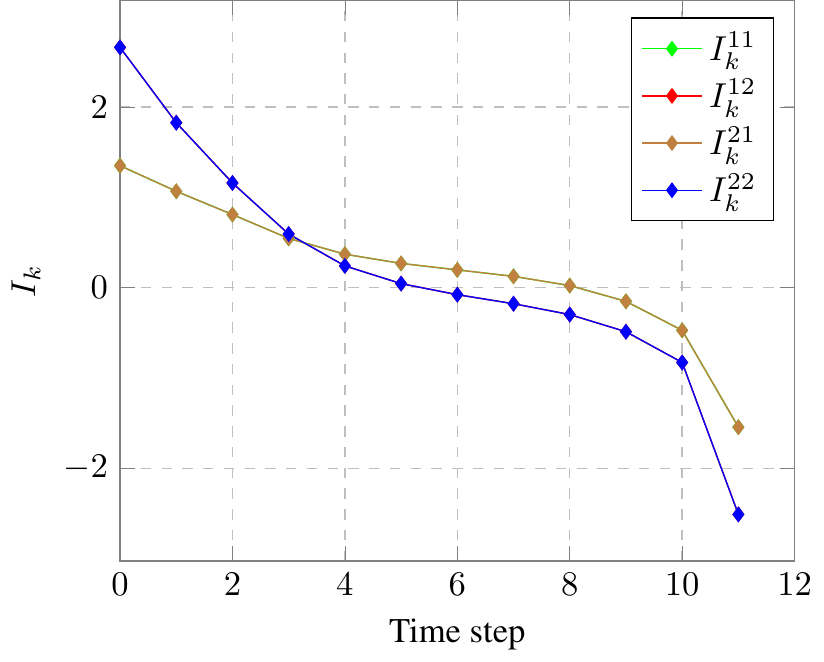}
 		\caption{Consumption or contribution}
 		\label{fig:alloc2}
 	\end{subfigure}\qquad 
 \begin{subfigure}[t]{.225\textwidth}
    \includegraphics[scale =0.5]{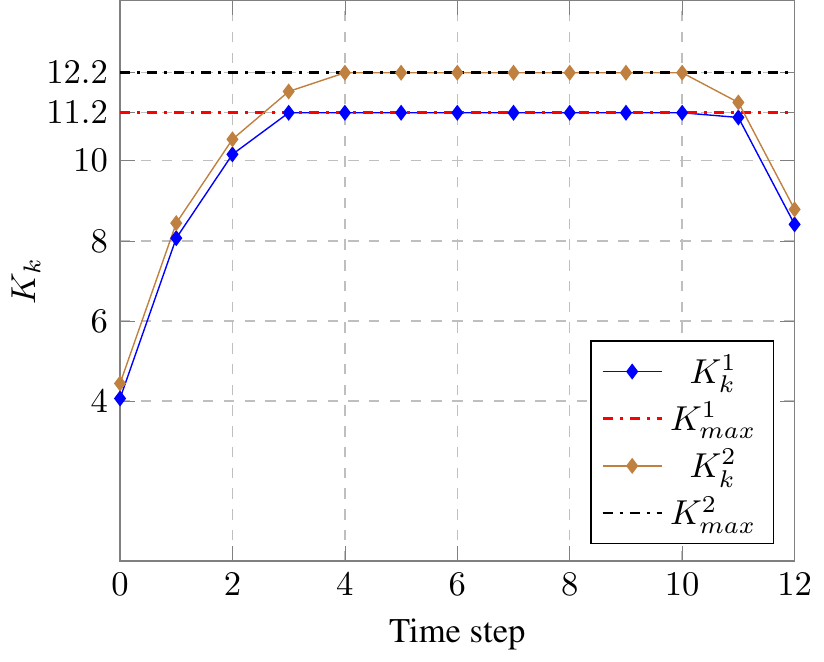}
 	\caption{Battery storage }
 	\label{fig:alloc3}
 \end{subfigure}
  \caption{State (grid resources) and decision  (consumption/contribution and battery storage) variables for the smart grid system with energy storage.}
	\label{fig:simResult} 
 \end{figure*}
\begin{figure*}[ht]
\centering 
\begin{subfigure}[t]{.225\textwidth}
   \includegraphics[scale =0.5]{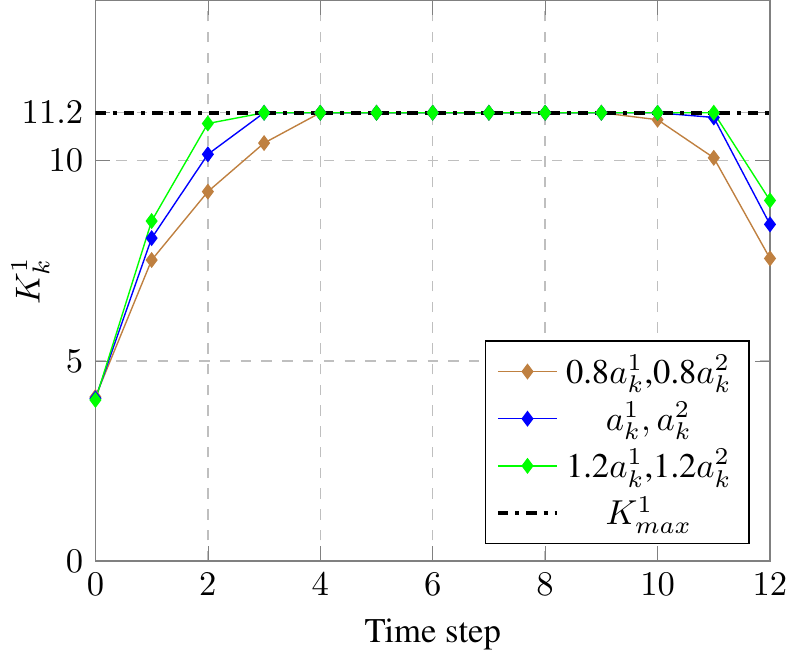}
   \caption{Battery storage: user 1 }
    \label{fig:incentive1}
\end{subfigure} \quad
\begin{subfigure}[t]{.225\textwidth}
 \includegraphics[scale =0.5]{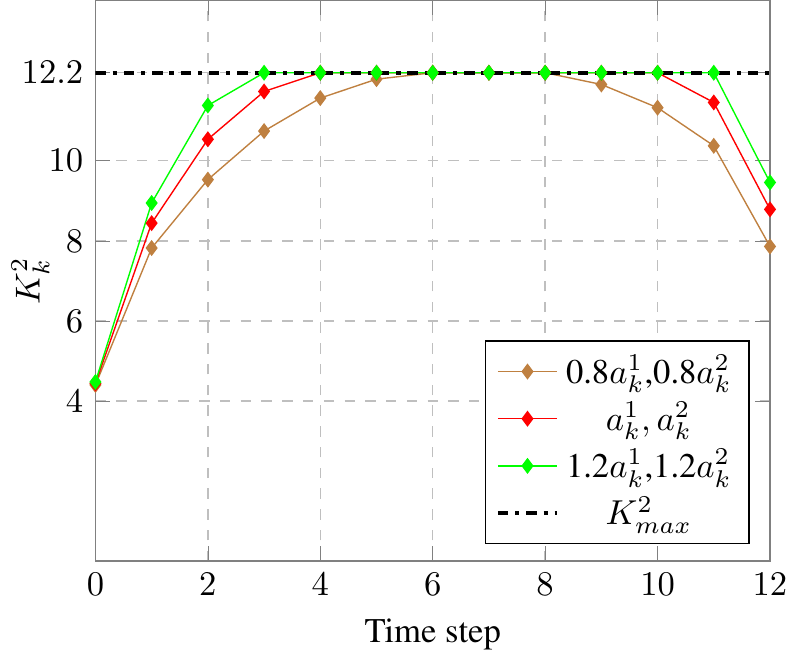}
		\caption{Battery storage: user 2 }
		\label{fig:incentive2}
\end{subfigure} \quad
\begin{subfigure}[t]{.225\textwidth}
 \includegraphics[scale =0.5]{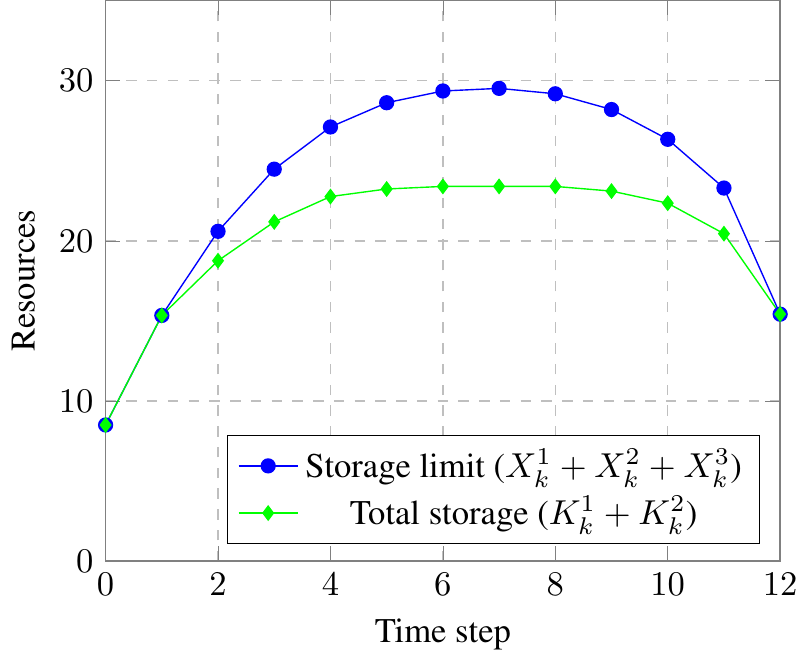}
	\caption{20\% decrease in player specific incentive}
	\label{fig:incentive3}
\end{subfigure} \quad
\begin{subfigure}[t]{.225\textwidth}
 \includegraphics[scale =0.5]{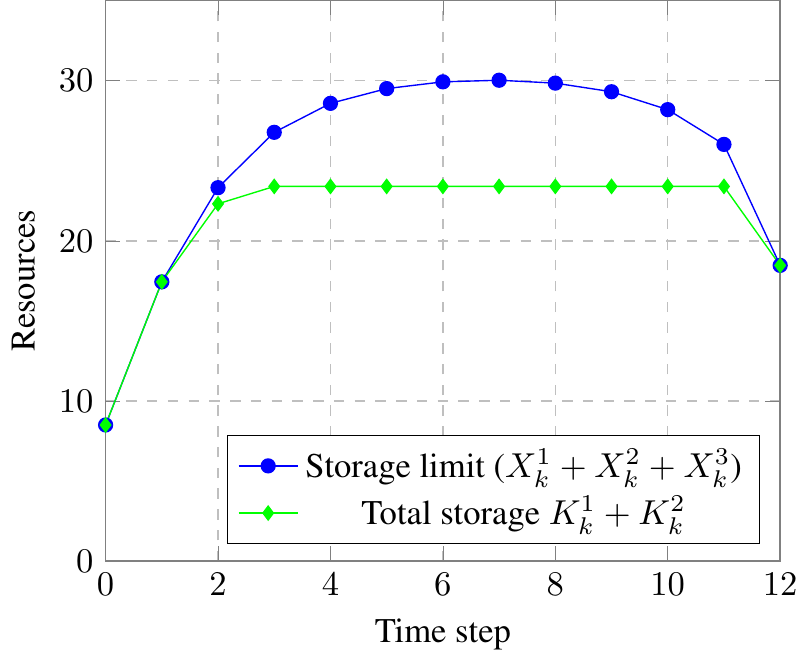}
	\caption{20\% decrease in player specific incentive}
	\label{fig:incentive4}
\end{subfigure} 
\caption{Panels (a) and (b) illustrate the variation in battery storage with change in storage incentive. Panels (c) and (d) indicate the storage limit and total battery storage with 20\% 
	variation in the incentive towards battery storage.}
	\label{fig: Storage}
\end{figure*}
Next, we analyze the effect of incentive parameter $a_k^i$ on the energy storage behavior of the users. We vary the player specific incentive parameter, $a_k^i$ for both the users. As the incentive parameter $a_k^i$ is varied with a 20\% variation around the baseline values, without varying the battery storage cost parameters, we deduce that both the users store higher amount of energy in their batteries with higher values of the incentive parameter.  Figures \ref{fig:incentive1} and \ref{fig:incentive2} illustrate that the users utilize higher capacities for a longer time when the  incentive parameter is higher. However, since the battery capacity of each player is limited, the players are unable to increase the storage continually with increase in the incentive.  Besides, full capacity is utilized for a shorter period when we lower the incentive parameter. Finally, the constraint on total storage \eqref{eq:totalstorage} also makes sure that the total energy stored in the battery is lower than the total resources by at least $\epsilon_k$. This result is illustrated in Figures \ref{fig:incentive3} and \ref{fig:incentive4}.
%

\section{Conclusions} 
 \label{sec:conclusions}
  In this paper, we studied the conditions under which a class of $N$-player non-zero sum discrete time dynamic games with inequality constraints admits a potential game structure. Drawing motivations  from the theory of static potential games, we associated an optimal control problem with inequality constraints, and derived conditions under which the solution of this optimal control problem provides a (constrained) open-loop Nash equilibrium. When the potential functions are not specified before hand, we derived conditions under which the  potential functions can be constructed using the problem data. We specialized these results to a linear quadratic setting and provided a linear complementarity problem based approach for computing the open-loop Nash equilibrium. In particular, the computed equilibrium is a refinement of the open-loop Nash equilibria obtained in \cite{Reddy:15}.  We illustrated our results with an example inspired by resource allocation in a smart grid network with energy storage. For future work, we plan to investigate the existence of potential functions for this class of games under feedback information structure.
  
\bibliographystyle{plainnat}
\bibliography{OLPDG}
\end{document}